\newtheorem{thm}{Theorem}
\newtheorem{prop}{Proposition}
\newtheorem{lem}{Lemma}
\newtheorem{cor}{Corollary}
\theoremstyle{definition}
\newtheorem{defn}{Definition}
\newtheorem{rem}{Remark}
\newtheorem{example}{Example}
\crefname{thm}{Theorem}{Theorems}
\crefname{prop}{Proposition}{Propositions}
\crefname{lem}{Lemma}{Lemmas}
\crefname{cor}{Corollary}{Corollaries}
\crefname{defn}{Definition}{Definitions}
\crefname{rem}{Remark}{Remarks}
\crefname{equation}{}{}
\crefname{section}{Section}{Sections}
\DeclareMathOperator{\LT}{LT}
\DeclareMathOperator{\Lex}{Lex}
\DeclareMathOperator{\Image}{Im}
\DeclareMathOperator{\Ch}{Ch}
\DeclareMathOperator{\Con}{Con}
\DeclareMathOperator{\Q}{Q}
\DeclareMathOperator{\Fil}{Fil}
\DeclareMathOperator{\Ker}{Ker}
\DeclareMathOperator{\Formulas}{Form}
\DeclareMathOperator{\Pro}{Pro-}
\DeclareMathOperator{\Sub}{Sub}
\DeclareMathOperator{\Norm}{Norm}
\DeclareMathOperator{\Seq}{Seq}
\DeclareMathOperator{\internal}{\delta^i}
\DeclareMathOperator{\external}{\delta^e}
\DeclareMathOperator{\height}{ht}
\DeclareMathOperator{\width}{wt}
\DeclareMathOperator{\confluent}{e}
\newcommand{\Ord}{\textup{Ord}}
\newcommand{\heart}{\ensuremath\heartsuit}
\newcommand{\KFr}{\textup{\textbf{KFr}}}
\newcommand{\PreO}{\textup{\textbf{PreO}}}
\newcommand{\Pos}{\textup{\textbf{Pos}}}
\newcommand{\GLLin}{\textup{\textbf{GL-Lin}}}
\newcommand{\MA}{\textup{\textbf{MA}}}
\newcommand{\CAMA}{\textup{\textbf{CAMA}}}
\newcommand{\CABA}{\textup{\textbf{CABA}}}
\newcommand{\fin}{\textit{fin}}
\newcommand{\lf}{\textit{lf}}
\newcommand{\id}{\textup{id}}
\newcommand{\op}{\textup{op}}
\newcommand{\Set}{\textup{\textbf{Set}}}
\title{A Proof Theory for Profinite Modal Algebras}
\author[1,3]{Matteo De Berardinis}
\author[2]{Silvio Ghilardi}
\affil[1]{University of Amsterdam}
\affil[2]{Università degli Studi di Milano}
\affil[3]{Università degli Studi di Salerno}
\date{June 2025}
\begin{document}

\maketitle

\begin{abstract}
In a previous paper, we showed that profinite $L$-algebras (where $L$ is a variety of modal algebras generated by its finite members) are monadic over $\bf Set$. This monadicity result suggests that profinite $L$-algebras could be presented as Lindenbaum algebras for propositional theories in  infinitary versions of  propositional modal calculi. In this paper we  identify such calculi as modal enrichments of Maehara-Takeuti's infinitary extension of the sequent calculus $\bf LK$. We also investigate correspondences between syntactic properties of the calculi and regularity/exactness properties of the opposite category of profinite $L$-algebras.
\end{abstract}

\section{Introduction}
In~\cite{MDB-SG}, we proved that the category of profinite modal algebras is monadic over $\Set$. This result extends trivially to the category of profinite $L$-algebras, where $L$ is a finitely approximable variety of modal algebras.\footnote{Here and in the whole paper, we treat normal modal logics and varieties of modal algebras interchangeably and use the letter $L$ to denote one of them. We also restrict to the case where $L$ is finitely approximable (namely, generated by its finite members), because the focus of the paper is on profinite algebras and profinite $L$-algebras are the same as profinite $L'$-algebras where $L'$ is the subvariety of $L$ generated by its finite members.}
The monadicity theorem clearly indicates that profinite modal algebras are sensible to an `algebraic' description, although not in terms  of a set of operations and equations of finitary character, because the monad is not finitary. From the logician's point of view, a convenient `algebraic' description should be linked to a calculus, an infinitary calculus in our case.
Building it is precisely  the aim of the present paper. We take inspiration from Maehara-Takeuti's infinitary sequent calculus for classical logic introduced in~\cite[Chapter 4]{Tak}: in such a calculus,  conjunctions and disjunctions can have arbitrary arity, but proofs are well-founded trees. The idea is to add to it additional modal rules characterizing the specificity of profinite modal algebras.

We build our calculus in~\cref{sec:calc} and we connect it to our profinite modal algebras via a Lindenbaum construction in~\cref{sec:sem}. The calculus is parameterized by a propositional modal logic $L$, a set of propositional variables $G$ and specific theory axioms $\tau$ (to theory axioms, uniform substitution does not apply, unlike to $L$ axioms).

In~\cref{sec:prop}, we connect proof theoretic properties (basically variants of Beth Definability and Craig Interpolation Theorems) of our infinitary calculi for a logic $L$ with categorical properties of the category $\Pro L\MA_\fin$ of profinite modal $L$-algebras (better, of the dual category $L\KFr_\lf$ of locally finite Kripke frames for $L$).
We focus on the analysis of factorization systems. Being monadic,  $\Pro L\MA_\fin$ is regular and hence has a stable regular epi/mono factorization system, which becomes an epi/regular mono factorization system in $L\KFr_\lf$. The latter (the coregular factorization system of $L\KFr_\lf$) can be compared with the regular factorization system of $L\KFr_\lf$ (which also exists because $L\KFr_\lf$ is both complete and cocomplete), so one may ask the following: do such factorizations coincide? are they stable? It turns out that the coincidence of the regular and coregular factorization in $L\KFr_\lf$ is equivalent to a strong version of the Beth Definability Property, whereas the stability under pullbacks of the coregular factorization in $L\KFr_\lf$ is equivalent to Craig Interpolation Theorem (for global consequence relation). Similar characterizations can be found for regularity of monos and of epis.  
We also supply a logical characterization of Barr exactness property, together with examples and counterexamples.

The above analysis needs to be compared with the similar analysis from the literature on the customary finitary fragment of our language and on the customary finitary algebras. As we shall see during the paper, in many cases the results are parallel, however there are also remarkable differences. Probably the most striking difference concerns the fact that epis are always regular in $\Pro L\MA_\fin$ whenever $L$ extends the modal logic $K4$, whereas in the case of the finitary algebras this is true only if one restricts to finitely presented algebras~\cite{Mak92,Ghi-Zaw}. In general, the impression is that the framework gets simplified and proofs become smoother in the infinitary language, where for instance  the distinction between ordinary and uniform interpolants disappears and where the natural formulation of the Weak Beth Definability Property is what is usually called \say{infinite (deductive) Beth definability property}\cite{blok,Mor20}.

We conclude the paper by mentioning in~\cref{sec:conclusions} some  research directions and open problems that are specifically suggested by our infinitary framework.

\section{Profinite Modal Algebras}\label{sec:pre}

In this Section we review (and integrate) some notions and results from~\cite{MDB-SG} in order to supply the  framework of the paper.

The algebraic semantics for the normal modal logic $K$ \cite{Modal} is given by \emph{modal algebras}: a modal algebra $(B,\pos)$ is a Boolean algebra $B$ endowed with a finite-join preserving operator $\pos \colon B \longrightarrow B$ such that
$$\pos (x \vee y) = \pos x \vee \pos y ~~~~~\text{and}~~~~~ \pos \bot = \bot.$$
The operator $\pos$ is called the ‘possibility’ operator and its dual ($\nec x \coloneqq \neg \pos \neg x$) is called the ‘necessity’ operator. Modal algebras and Boolean morphisms preserving $\pos$ form the category $\MA$. We will consider varieties of modal algebras generated by their finite members. Such varieties (called `finitely approximable') are in bijective correspondences with  normal modal logics with finite model property~\cite{Modal}: if $L$ is such a logic, we call $L$\emph{-algebras} their algebraic models and $L\MA$ the corresponding variety, viewed as a category.
%; $L$ will denote an extension of $K$ axiomatizing any of such varieties, whose members form the category $L\MA$ of $L$-algebras.

%The limitation to "varieties-generated-by-their-fi\-ni\-te-members"/"logics-with-fi\-ni\-te-model-property" is necessary because profinite $L$-algebras are the main subject of the paper.
%Semantics for our infinitary version of $L$ will be given by \emph{profinite modal algebras}. 
\emph{Profinite $L$-algebras} are defined as the \emph{Pro-Completion} (= the categorical construction of freely adjoining cofiltered limits to a category, see \cite{Stone}) of the full subcategory $L\MA_\fin$ of $L\MA$ given by finite $L$-algebras.
Below, we shall recover them
%Practically speaking, we can realize this construction 
as a full subcategory of $\CAMA_\infty$, the category of \emph{complete}, \emph{atomic} and \emph{completely additive} (= $\pos$ commutes with arbitrary joins) modal algebras and Boolean morphisms preserving arbitrary joins (hence arbitrary meets), as well as $\pos$. To this aim, we first recall Goldblatt-Thomason duality of $\CAMA_\infty$ with Kripke frames.

%Modal logics also have a relational semantics, provided by \emph{Kripke frames}. 
A \emph{Kripke frame} $(W,\prec)$ is a set $W$ endowed with a binary relation $\prec$ on it (most of the times we will omit to indicate the binary relation). A \emph{p-morphism} from a Kripke frame $W$ to a Kripke frame $V$ is a function $f \colon W \longrightarrow V$ with the following properties:
\begin{enumerate}
    \item (stability) for $w, w' \in W$, $w \prec w' \implies f(w) \prec f(w')$;
    \item (openness) for $v' \in V$, $w \in W$, $f(w) \prec v' \implies \exists w' \in W \text{ such that } w \prec w' ~\&~ f(w') = v'$.
\end{enumerate}
Kripke frames and p-morphisms form the category $\KFr$.
%; we will denote by $L\KFr$ the full subcategory of the Kripke frames satisfying $L$. 
%
%We mention some remarkable examples. $K4$ is obtained from $K$ by adding the axiom
%$$\pos \pos x = \pos x.$$
%$K4\KFr$ is the class of transitive Kripke frames. $S4$ is obtained from $K4$ by adding the axiom
%$$x \wedge \pos x = x.$$
%$S4$-algebras are also called \emph{interior} algebras and $S4\KFr$ is the class $\PreO$ of preorders. $\Pos$ will denote the class of posets.
%
Extending Tarski duality between $\CABA$ (complete atomic Boolean algebras) and $\Set$, we obtain the following.
\begin{thm}[Goldblatt-Thomason's duality~\cite{thomason}]\label{thm:duality}
$\CAMA_\infty$  is dual to $\KFr$.
\end{thm}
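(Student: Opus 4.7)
The plan is to lift Tarski's duality between $\CABA$ and $\Set$ to the present setting by transporting the modal structure across it. On objects, I would send a Kripke frame $(W,\prec)$ to $(\mathcal{P}(W), \pos_\prec)$ with $\pos_\prec S := \{w \in W : \exists s \in S,\ w \prec s\}$; this operator preserves arbitrary unions (hence is completely additive) and sends $\emptyset$ to $\emptyset$, so the result lies in $\CAMA_\infty$. Conversely, $(B,\pos) \in \CAMA_\infty$ goes to $(\textup{At}(B), \prec_\pos)$, where $\textup{At}(B)$ denotes the set of atoms and $a \prec_\pos a'$ iff $a \le \pos a'$.

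For morphisms, a p-morphism $f \colon W \to V$ induces $f^{-1} \colon \mathcal{P}(V) \to \mathcal{P}(W)$, which preserves arbitrary joins and meets automatically; stability of $f$ yields $f^{-1}(\pos_\prec T) \subseteq \pos_\prec(f^{-1} T)$ while openness yields the reverse inclusion, so $f^{-1}$ is in $\CAMA_\infty$. Conversely, $h \colon B \to B'$ in $\CAMA_\infty$ dualizes by sending each $a' \in \textup{At}(B')$ to $h_*(a') := \bigwedge \{b \in B : a' \le h(b)\}$; preservation of arbitrary meets by $h$ makes this an actual minimum of that set, and complete primality of atoms in the $\CABA$ $B'$ forces $h_*(a')$ to be an atom of $B$. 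Preservation of $\pos$ by $h$ then translates into stability and openness for $h_*$. The unit and counit are the standard ones: $B \cong \mathcal{P}(\textup{At}(B))$ via $b \mapsto \{a : a \le b\}$, a $\CAMA_\infty$-iso because complete additivity gives $\pos b = \bigvee_{a \le b} \pos a$, and $W \cong \textup{At}(\mathcal{P}(W))$ via $w \mapsto \{w\}$, which is tautologically a p-morphism iso.

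The main obstacle I anticipate is the morphism correspondence, where complete additivity plays an essential role: without it, $\pos$ on $B$ is not recoverable from its action on atoms, and the construction would only yield a neighborhood-frame dual rather than a Kripke one. What must be checked carefully is that stability and openness of a set map are jointly equivalent to preservation of $\pos$ by its preimage, and dually that preservation of arbitrary joins together with $\pos$ by an algebra morphism forces the dual atom-level map to satisfy both p-morphism conditions; once this is established, functoriality of both constructions and naturality of the unit and counit follow routinely.
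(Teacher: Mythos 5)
Your proposal is correct and follows essentially the same route as the paper, whose proof is only a sketch giving the two object-level constructions (atoms with $a \prec b$ iff $a \leq \pos b$ in one direction, powersets with the existential $\pos$ in the other); your treatment of morphisms, the role of complete additivity, and the unit/counit is the standard completion of that sketch. One small slip: the attributions are swapped --- openness of $f$ gives $f^{-1}(\pos_\prec T) \subseteq \pos_\prec(f^{-1}T)$ and stability gives the reverse inclusion --- but since you use both conditions to conclude equality, the argument is unaffected.
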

\begin{proof}(Sketch)
On the modal algebras side, the duality functors make the set of atoms of a complete atomic and completely additive modal algebra into a Kripke frame, by setting $a \prec b$ iff $a \leq \pos b$. On the other side, for a Kripke frame $W$, the Boolean algebra $\mathcal{P}(W)$ is turned into a modal algebra by setting $\pos X := \{w \in W\ \vert\ \exists w' (w \prec w' ~\&~ w' \in X)\}$.
\end{proof}
Goldblatt-Thomason's duality restrict to useful full subcategories: given a logic $L$, we can consider the full subcategory $L\CAMA_\infty$ formed by the complete, atomic and completely additive modal algebras belonging to $L\MA$; its dual is the category $L\KFr$, whose objects are precisely those Kripke frames satisfying $L$. %, whose dual power set modal algebra is in $L\CAMA_\infty$.
Since finite modal algebras are complete, atomic and completely additive, the category of finite modal $L$-algebras $L\MA_\fin$ is dual to $L\KFr_\fin$, the full subcategory of $L\KFr$ consisting of its finite Kripke frames.

Notable examples of normal modal logics with finite model property are the logics $K4$, $S4$, \emph{GL}, \emph{Grz}, \emph{GL.Lin}, \emph{Grz.Lin}, etc: the Table below\footnote{In the Table $\nec^+ x$ stands for $x \wedge \nec x$; a relation $\prec$ is said to be \emph{locally linear} iff it satisfies the condition $(w\prec v_1 ~\&~ w\prec v_2) \Rightarrow ( v_1\prec v_2 \vee v_1=v_2 \vee v_2\prec v_1)$ for all $v,w_1,w_2$.} reports the axiomatization of such logics and the characterization of their finite Kripke frames (see~\cite{Modal} for more information and for the involved proofs).

\begin{center}
\begin{tabular}{||l|l|l||}  \hline
Logic  $L$       &    Axioms      & Corresponding finite  \\
 %                &                & of the  \\
                 &                &  Kripke frames \\ \hline
 $K$        & ~~~~~~~  - &~~~~~~ - \\ \hline
 $K4$        &   $\nec x \to \nec \nec x$ & $\prec$ is transitive  \\
                     \hline
                % &                             & and transitive   \\
 $S4$        &   $K4$ + $\nec x \to x$ & $\prec$ is reflexive and \\
                 &                             &  transitive   \\
                                         \hline
 %{\bf S4.2}      &   {\bf S4} +  & reflexive and \\
 %                &    $\pos \nec p \to \nec \pos p$        &  locally confluent \\ \hline
 $S4$\emph{.Lin}      &   $S4$ +  & $\prec$ is reflexive, transitive  \\
                 &    $\nec (\nec x_1 \to x_2) \vee \nec (\nec x_2 \to x_1)$  & and locally linear  \\ \hline
 \emph{Grz}      &   $S4$ +  & $\prec$ is a partial order \\
                 &    $(\nec (\nec (x \to \nec x) \to x) \to x)$ &  \\ \hline
 \emph{Grz.Lin}      &   $S4$\emph{.Lin} + \emph{Grz}  & $\prec$ is a locally linear \\
                 &     & partial order \\ \hline
 \emph{GL}        &   $K4$ + $(\nec (\nec x \to x) \to \nec x)$ & $\prec$ is irreflexive and \\
                  &                                               &  transitive    \\
                                             \hline
  \emph{GL.Lin}        &   \emph{GL} +  & $\prec$ is irreflexive,  \\
                  &      $\nec^+ (\nec^+ x_1 \to x_2) \vee \nec^+ (\nec^+ x_2 \to x_1)$       & transitive and \\
    &   & locally linear \\
                                             \hline
\end{tabular}
\end{center}

As a consequence, in order to study the Pro-Completion of $L\MA_\fin$, we can describe the dual construction, called \emph{Ind-Completion}, over the category $L\KFr_\fin$.
\begin{defn}
A Kripke frame $W$ is said to be \emph{locally-finite} if, for all $w \in W$,
$$w^* \coloneqq \{w' \in W\ \vert\ \exists w_0,\dots,w_n \in W \text{ such that } w=w_0 \prec \dots \prec w_n=w'\}$$
is a finite set.
\end{defn}
We denote by $L\KFr_\lf$ the full subcategory of $L\KFr$ consisting of its locally-finite Kripke frames.

%\begin{rem}
%The class $L\KFr_\lf$ ($L\KFr_\fin$) is closed under (finite) disjoint unions, p-morphic images and generated subframes.
%\end{rem}

%We give some results whose proofs can be found in \cite{MDB-SG}.

\begin{prop}\label{prop:colim}
$L\KFr$ is cocomplete; colimits are inherited by the full subcategory $L\KFr_\lf$ and preserved by the forgetful functor to $\Set$. 
\end{prop}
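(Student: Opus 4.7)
The plan is to realize $\KFr$ as the category of coalgebras for the covariant powerset functor $P\colon \Set \to \Set$ and then exploit the standard fact that the forgetful functor from a category of $F$-coalgebras creates all colimits that exist in the base category. Explicitly, a Kripke frame $(W,\prec)$ corresponds to the coalgebra $\alpha_W\colon W \to PW$, $w\mapsto \{w' : w\prec w'\}$, and a function $f\colon W\to V$ is a p-morphism iff $Pf \circ \alpha_W = \alpha_V \circ f$ (this single equation encodes both stability and openness simultaneously). Given a diagram $D\colon \mathcal{I}\to\KFr$, I would form the set-theoretic colimit $X$ of $UD$ with coprojections $\iota_i$, observe that $\{P(\iota_i) \circ \alpha_{D(i)}\}_i$ is a cocone on $UD$ precisely because every transition map in $D$ is a coalgebra map, and invoke the universal property of $X$ to obtain a unique $\alpha\colon X \to PX$ with $\alpha \circ \iota_i = P(\iota_i) \circ \alpha_{D(i)}$ for all $i$. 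A direct check then shows that $(X,\alpha)$ is the colimit of $D$ in $\KFr$ and that the forgetful functor preserves it.

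Unpacking this, the relation on $X$ satisfies $\iota_i(w)\prec x$ iff there exists $w'\in D(i)$ with $w\prec_{D(i)} w'$ and $\iota_i(w')=x$. For coproducts this is the disjoint union of frames; for the coequalizer of $f,g\colon A\to B$ this is the quotient $B/{\sim}$ (by the equivalence generated by $f(a)\sim g(a)$) endowed with $[b]\prec[b']$ iff there are $b_0\sim b$, $b_0'\sim b'$ with $b_0\prec_B b_0'$, the well-definedness of this relation on $B/{\sim}$ being exactly the coalgebra-congruence condition that the existence of $\alpha$ guarantees. To pass from $\KFr$ to $L\KFr$ I would verify, for each row of the Table, that the corresponding frame condition is preserved by both constructions. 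Coproducts preserve every property in the Table trivially; for coequalizers one gives a short representative argument per axiom. As a template: for transitivity ($K4$), given $[b_1]\prec[b_2]\prec[b_3]$, I lift the first relation to $b_1\prec_B b_2'$ for some $b_2'\sim b_2$; well-definedness of $\alpha$ at $b_2\sim b_2'$ lets me re-lift the second relation to $b_2'\prec_B b_3'$ for some $b_3'\sim b_3$, whence $b_1\prec_B b_3'$ by transitivity of $B$ and therefore $[b_1]\prec[b_3]$. Analogous but separate arguments handle the other logics.

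Finally, for local finiteness: if $[w]\in X$ is represented by $w_0\in D(i)$, the explicit form of $\alpha$ shows that every immediate successor of $[w]$ in $X$ lies in $\iota_i[w_0^*]$; iterating along chains and using that $w_0^*$ is $\prec_{D(i)}$-closed yields $[w]^* \subseteq \iota_i[w_0^*]$, which is finite by hypothesis. Hence $L\KFr_\lf$ is closed under the colimits computed in $L\KFr$, and the forgetful functor to $\Set$ still preserves them. The step I expect to be most awkward is the case-by-case verification for the more delicate axioms of the Table (the Grzegorczyk axiom, local linearity, the $GL$ axiom), where one must unpack carefully how the congruence $\sim$ interacts with the frame condition; none of these is conceptually deep, but each requires its own short combinatorial argument exploiting the well-definedness of $\alpha$ on the quotient.
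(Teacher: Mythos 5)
Your coalgebraic core is sound: realizing $\KFr$ as the category of coalgebras for the covariant powerset functor, noting that p-morphisms are exactly the coalgebra homomorphisms, and invoking creation of colimits by the forgetful functor correctly yields cocompleteness of $\KFr$ with colimits preserved by (indeed created over) $\Set$; your closure argument for local finiteness, via $[w]^\ast\subseteq\iota_i[w_0^\ast]$, is also correct. The paper itself only cites an external reference for this proposition, so there is no in-text proof to compare against, but your route is a legitimate self-contained one for the $\KFr$-level claims.

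The genuine gap is in the passage from $\KFr$ to $L\KFr$. The proposition is stated for an arbitrary normal modal logic $L$ with the finite model property, whereas your plan verifies closure under coequalizers \say{for each row of the Table}; the Table lists only notable examples, so a case-by-case check cannot establish the statement in the generality in which it is asserted. Worse, even for those examples the Table records the first-order conditions characterizing the \emph{finite} frames for $L$, while the objects of $L\KFr$ are all Kripke frames validating $L$: for \emph{GL}, for instance, irreflexivity plus transitivity does not characterize arbitrary frames validating the L\"ob axiom, so preserving the listed condition under quotients is not the same as preserving membership in $L\KFr$ (and cocompleteness of $L\KFr$ itself, the first claim, involves colimits of diagrams that need not be locally finite). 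Both problems disappear with a single uniform observation that also spares you the \say{awkward} cases you anticipate: the canonical map from the coproduct $\coprod_i D(i)$ to the colimit is a surjective p-morphism (it is the underlying $\Set$-coequalizer, promoted to a coalgebra map by your own construction of $\alpha$), the coproduct is the disjoint union of the $D(i)$, and validity of modal formulas is preserved under disjoint unions and surjective p-morphic images. Hence $L\KFr$ is closed in $\KFr$ under all colimits for every normal modal logic $L$, with no appeal to frame correspondents at all; your local-finiteness argument then finishes the claim for $L\KFr_\lf$.
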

\begin{proof}
See \cite{MDB-SG}.
\end{proof}

\begin{thm}\label{thm:ind}
$L\KFr_{\lf}$ is the Ind-Completion of $L\KFr_{\fin}$ via the full inclusion $L\KFr_{\fin}\subseteq L\KFr_{\lf}$.
\end{thm}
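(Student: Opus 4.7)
My plan is to verify the standard universal characterization of the Ind-completion: a locally small category $\mathcal{C}$ is the Ind-completion of a small full subcategory $\mathcal{A}$ whenever (i) the inclusion $\mathcal{A}\hookrightarrow \mathcal{C}$ is fully faithful, (ii) $\mathcal{C}$ has small filtered colimits, (iii) every object of $\mathcal{A}$ is finitely presentable in $\mathcal{C}$, and (iv) every object of $\mathcal{C}$ is a filtered colimit of objects of $\mathcal{A}$. Condition (i) is immediate and (ii) follows from \cref{prop:colim}.

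For (iv), given $W\in L\KFr_\lf$, I would exhibit $W$ as the colimit of its finite upward-closed subframes $W_0\subseteq W$, ordered by inclusion. Each inclusion $W_0\hookrightarrow W$ is a p-morphism: stability is automatic and openness follows from upward-closure. For every $w\in W$, the set $w^*$ is finite, upward-closed and contains $w$, so each point lies in some member of the diagram; finite unions of such subframes are again in the family, keeping it directed. Each $W_0$ is a generated subframe of $W$, hence validates the same $L$-axioms as $W$, so it lies in $L\KFr_\fin$. Using \cref{prop:colim}, the colimit's underlying set is $\bigcup_0 W_0 = W$, and its relation reduces to $\prec$ on $W$: if $a\prec b$ in $W$ then $a,b$ both belong to $a^*$, which is itself one of the $W_0$'s.

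For (iii), let $V\in L\KFr_\fin$ and $f\colon V\to \operatorname{colim}_i W_i$ be a p-morphism with a filtered diagram in $L\KFr_\lf$. Since the forgetful functor to $\Set$ preserves filtered colimits (\cref{prop:colim}), $f$ factors set-theoretically through some $W_{i_0}$ as $f=\iota_{i_0}\circ f_0$. Filteredness together with finiteness of the edge set of $V$ lets me pass to some $W_{i_1}$ where $f_0$ becomes stable. For openness, local finiteness of $W_{i_1}$ implies that each $f_0(v)$ has only finitely many successors $b_1,\dots,b_k$ in $W_{i_1}$. Openness of $f$ in the colimit supplies, for each $b_j$, some $v'_j\in V$ with $v\prec v'_j$ and $[b_j]=f(v'_j)$; equality of these images means the finitely many pairs $(b_j,f_0(v'_j))$ can be simultaneously equalized at a later $W_{i_2}$. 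The transition map $W_{i_1}\to W_{i_2}$ is itself a p-morphism, so its openness guarantees that every successor of $f_0(v)$ in $W_{i_2}$ is the image of some $b_j$, hence equals $f_0(v'_j)$ in $W_{i_2}$. Thus $f_0$ factors through $W_{i_2}$ as a p-morphism. Uniqueness of the factorization up to further filtering follows from set-level finite presentability of the underlying set of $V$.

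The main obstacle is securing openness at a single stage in (iii): it requires a delicate interplay between openness of $f$ in the colimit (to provide candidate preimages of successors) and openness of the transition p-morphisms (to prevent \emph{new} successors from appearing at later stages). Local finiteness of each $W_i$ is essential to keep the set of successors finite, reducing the verification to finitely many conditions that can then be satisfied simultaneously by filteredness.
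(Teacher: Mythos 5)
Your proof is correct: the recognition criterion for Ind-completions is the right tool, the decomposition of a locally finite frame into its finite generated (i.e.\ upward-closed) subframes $w^*$ is the intended filtered diagram, and your two-stage argument for finite presentability (first stabilizing the finitely many edges of $V$, then using local finiteness of $W_{i_1}$ together with openness of both $f$ and the transition p-morphisms to control the finitely many successors of each $f_0(v)$) is sound. The paper itself gives no in-line proof but defers to \cite{MDB-SG}; your argument is essentially the standard one that the cited reference carries out, with the only implicit ingredients being the description of filtered colimits of Kripke frames from \cref{prop:colim} and the fact that bijective p-morphisms are isomorphisms.
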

\begin{proof}
See \cite{MDB-SG}.
\end{proof}

As a consequence,
\begin{cor}\label{cor:lim}
$L\KFr_\lf$ has all limits.
\end{cor}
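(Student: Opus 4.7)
The plan is to combine the Ind-completion description of $L\KFr_\lf$ with the observation that its finite building blocks admit finite colimits. First, \cref{thm:duality} restricts to an anti-equivalence $L\MA_\fin \simeq L\KFr_\fin^{\op}$: finite modal algebras are automatically complete, atomic and completely additive, and the $L$-axioms translate correctly between the two sides. Combined with \cref{thm:ind}, this yields
\[L\KFr_\lf\ \simeq\ \textup{Ind}(L\KFr_\fin)\ \simeq\ (\Pro L\MA_\fin)^{\op},\]
so producing limits in $L\KFr_\lf$ is equivalent to producing colimits in $\Pro L\MA_\fin$ or, alternatively, to producing limits in $\textup{Ind}(L\KFr_\fin)$.

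Next, I would verify that $L\MA_\fin$ has all finite limits: the product of two finite $L$-algebras, computed in the variety $L\MA$, is again a finite $L$-algebra, and the equalizer of two morphisms between finite $L$-algebras is the subalgebra on which they agree, hence again finite. Dualizing, $L\KFr_\fin$ has all finite colimits. At this point I would invoke the standard Gabriel--Ulmer description of the Ind-completion of a small category $\mathcal{C}$ with finite colimits: it is equivalent to the category $\textup{Lex}(\mathcal{C}^{\op},\Set)$ of finite-limit-preserving presheaves on $\mathcal{C}$, which is a reflective subcategory of $[\mathcal{C}^{\op},\Set]$ and hence complete (small limits being computed pointwise, since pointwise limits of left-exact functors are again left-exact).

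The main obstacle is really just bookkeeping: checking that Goldblatt--Thomason's duality restricts cleanly to the finite objects, and that finite limits of finite $L$-algebras remain inside $L\MA_\fin$. Once these easy verifications are in place, completeness follows at once from the general theory of locally finitely presentable categories, and no ad hoc construction of limits is needed. (Note that, in contrast to colimits in \cref{prop:colim}, limits in $L\KFr_\lf$ are not computed as limits in $L\KFr$: e.g.\ a product of cycles of every finite length, taken in $L\KFr$, fails to be locally finite, so the limit in $L\KFr_\lf$ has to be produced by the indirect route above.)
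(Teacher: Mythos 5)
Your argument is correct and follows the route the paper intends: the corollary is stated as a consequence of \cref{thm:ind}, and the cited source (as echoed in the paper's concluding remarks identifying $L\KFr_\lf$ with $\Lex(L\MA_\fin,\Set)$) obtains completeness from exactly this locally-finitely-presentable/Ind-completion machinery. The only substantive verifications you add --- that $L\MA_\fin$ has finite limits and that the Goldblatt--Thomason duality restricts to finite objects --- are both sound, so nothing is missing.
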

\begin{proof}
See \cite{MDB-SG}.
\end{proof}

Dually we have:
\begin{thm}\label{thm:produality} The Goldblatt-Thomason duality restricts to a duality between the category of profinite modal $L$-algebras $\Pro L\MA_\fin$ (defined as the Pro-Completion of $L\MA_\fin$) and the category  $L\KFr_\lf$.
\end{thm}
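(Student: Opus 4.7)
The plan is to read off this duality as a formal consequence of previous results, without any additional analysis of profinite modal algebras. First, I would restrict the Goldblatt--Thomason duality of~\cref{thm:duality} to the finite level: since every finite modal algebra is automatically complete, atomic and completely additive, the equivalence $\CAMA_\infty \simeq \KFr^{\op}$ cuts down, by passing to full subcategories on each side (and using the logic-specific restriction $L\CAMA_\infty \simeq L\KFr^{\op}$ already noted before~\cref{thm:ind}), to a base equivalence $L\MA_\fin \simeq (L\KFr_\fin)^{\op}$.

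The second ingredient is the standard categorical identification
\[
\Pro \mathcal{C} \;\simeq\; \bigl(\textup{Ind}(\mathcal{C}^{\op})\bigr)^{\op},
\]
valid for any essentially small category $\mathcal{C}$ and immediate from the universal properties of the two free completions: freely adjoining cofiltered limits corresponds, via dualization, to freely adjoining filtered colimits. Applying this to $\mathcal{C} = L\MA_\fin$ and substituting the base equivalence from the first step gives
\[
\Pro L\MA_\fin \;\simeq\; \Pro\bigl((L\KFr_\fin)^{\op}\bigr) \;\simeq\; \bigl(\textup{Ind}(L\KFr_\fin)\bigr)^{\op}.
\]

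To finish, I would invoke~\cref{thm:ind}, which realizes $\textup{Ind}(L\KFr_\fin)$ as $L\KFr_\lf$ via the full inclusion $L\KFr_\fin \subseteq L\KFr_\lf$. Substituting into the previous display yields the claimed duality $\Pro L\MA_\fin \simeq (L\KFr_\lf)^{\op}$. There is no substantive obstacle here; the whole argument is a chain of universal-property manipulations. The one point deserving a brief check is that, under the chain of equivalences, the canonical embedding $L\MA_\fin \hookrightarrow \Pro L\MA_\fin$ corresponds to the inclusion $L\KFr_\fin \hookrightarrow L\KFr_\lf$, so that the resulting duality genuinely \emph{extends} the Goldblatt--Thomason duality at the finite level; this compatibility is forced by the uniqueness clause in the universal property of the Pro-completion.
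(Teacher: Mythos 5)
Your argument is correct and is exactly the one the paper intends: the theorem is introduced with ``Dually we have:'' immediately after \cref{thm:ind}, i.e.\ the duality is obtained by restricting Goldblatt--Thomason to the finite level and then dualizing the Ind-completion $L\KFr_\fin \subseteq L\KFr_\lf$ into the Pro-completion of $L\MA_\fin$. Your closing remark about compatibility of the canonical embeddings is a worthwhile explicit check that the paper leaves tacit.
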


From the above picture, it follows that $\Pro L\MA_\fin$ is complete and co-complete; the forgetful functor from $L\CAMA_\infty$ to $\Set$ preserves limits, but does not have an adjoint; on the contrary, its restriction to $\Pro L\MA_\fin$  does have an adjoint and is also \emph{monadic}: this is proved in~\cite{MDB-SG} in two ways, either by a direct application of Beck theorem~\cite{CWM} or by combining Manes monadicity theorem~\cite{manes} with specific facts about quotients of profinite algebras~\cite{gehrke}.

The monadicity result suggests that it is possible to characterize the members of $\Pro L\MA_\fin$ `algebraically', possibly using equations involving infinitary meets and joins, as it happens with complete atomic Boolean algebras. Below we show that this is indeed the case and this motivates the introduction of the infinitary calculi of next section and the study of the correspondence between proof-theoretic properties of such calculi and categorical properties of $\Pro L\MA_\fin$.

To proceed, we need some notation. Given a set of sets $\mathscr{X}$, $\Ch(\mathscr{X})$ is the set of \emph{choice functions} over $\mathscr{X}$, i.e.\ functions $c \colon \mathscr{X} \longrightarrow \bigcup \mathscr{X}$ such that $c(X) \in X$ for each $X \in \mathscr{X}$; for a set $X$, $\mathcal{P}(X)$ ($\mathcal{P}_\kappa(X)$) is the set of subsets of $X$ (the set of subsets of cardinality less than $\kappa$).
We first recall the following well-known.
 
\begin{lem}
The following conditions are equivalent for a Boolean algebra $B$:
\begin{enumerate}\label{lem:caba}
    \item[\rm (i)] $B$ is complete and atomic;
    \item[\rm (ii)] $B$ is isomorphic to a powerset Boolean algebra;
    \item[\rm (iii)] $B$ is complete and satisfies the infinitary distributive law
    \begin{align}\label{eq:id}
    \bigwedge \left\{\bigvee X\ \middle\vert\ X \in \mathscr{X}\right\} = \bigvee \left\{\bigwedge \{d(X)\ \vert\ X \in \mathscr{X}\}\ \middle\vert\ d \in \Ch(\mathscr{X})\right\}\tag{ID}
    \end{align}
    for every family $\mathscr{X}$ of subsets of $B$.
\end{enumerate}
\end{lem}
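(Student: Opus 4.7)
My plan is to prove the cycle $(i)\Rightarrow(ii)\Rightarrow(iii)\Rightarrow(i)$. The implication $(i)\Rightarrow(ii)$ is the classical Tarski representation: sending $b\in B$ to the set of atoms below it gives an isomorphism $B \cong \mathcal{P}(\mathrm{At}(B))$, where atomicity guarantees that every nonzero element dominates an atom and completeness guarantees that each element is the join of the atoms below it. This step is standard and requires no new ideas.

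For $(ii)\Rightarrow(iii)$, I would observe that in a powerset algebra $\mathcal{P}(Y)$ the infinitary meets and joins are intersections and unions, so (ID) becomes the set-theoretic identity
\[
\bigcap_{X \in \mathscr{X}} \bigcup X \;=\; \bigcup_{d \in \Ch(\mathscr{X})} \bigcap_{X \in \mathscr{X}} d(X),
\]
whose non-trivial inclusion $\subseteq$ is a direct application of the axiom of choice: an element lying in each $\bigcup X$ gives rise to the required choice function $d$.

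The interesting direction is $(iii)\Rightarrow(i)$: assuming completeness and (ID), I must produce, for every nonzero $b$, an atom below $b$. First I would derive the ordinary infinitary distributive law $b \wedge \bigvee Y = \bigvee_{y\in Y}(b \wedge y)$ as the instance of (ID) with $\mathscr{X} = \{\{b\}, Y\}$, since the choice functions amount to picking $b$ on the first coordinate and an arbitrary $y \in Y$ on the second. Next, given $b \neq 0$, I would apply (ID) to the family $\mathscr{X}_B := \{\{c,\neg c\} \mid c \in B\}$: each $\bigvee\{c,\neg c\}$ equals $\top$, so the left-hand side of (ID) is $\top$, and meeting with $b$ (via the distributivity just obtained) yields
\[
b \;=\; \bigvee_{d\,\in\,\Ch(\mathscr{X}_B)} \left( b \wedge \bigwedge_{c\in B} d(\{c,\neg c\}) \right).
\]
Since $b\neq 0$, some choice function $d$ produces a nonzero witness $a := b \wedge \bigwedge_{c\in B} d(\{c,\neg c\})$, which I then verify to be an atom: for any $a' \leq a$ with $a' \neq 0$, specialising the defining meet to $c := a'$ forces $a \leq a'$ or $a \leq \neg a'$; the latter would give $a' \leq a \leq \neg a'$ and hence $a' = 0$, a contradiction, so $a' = a$. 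I expect the main obstacle to be spotting this construction: recognising that a choice function through all complementary pairs $\{c,\neg c\}$ is the syntactic counterpart of an ultrafilter, and therefore isolates an atom, is the conceptual key, while the surrounding verifications are mechanical.
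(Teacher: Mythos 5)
Your proposal is correct and follows the same route as the paper's (sketched) proof: Tarski representation for (i)$\Rightarrow$(ii), the set-theoretic/choice argument for (ii)$\Rightarrow$(iii), and for (iii)$\Rightarrow$(i) extracting atoms by applying (ID) to $\bigwedge_{c\in B}(c\vee\neg c)=\top$, which is exactly the construction the paper indicates. Your version supplies the details the paper leaves implicit (meeting with $b$ via the derived distributivity and verifying the atom property), and these details check out.
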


\begin{proof} (Sketch) If a Boolean algebra is complete and atomic, then it is isomorphic to the power set of its atoms. That a powerset Boolean algebra satisfies the infinitary distributive law above is clear. If the infinitary distributivity law holds in a Boolean algebra $B$, then the algebra is atomic and its atoms are the nonzero elements obtained by applying the infinite distributivity law to the first member of the equation $\bigwedge_{x\in B} (x\vee \neg x)=1$.
\end{proof}

%silvio Nov2025
Below we indicate with $\mathcal{P}_\omega(X)$ the set of the finite subsets of a set $X$.

\begin{lem}\label{lem:lf}
The following conditions are equivalent for a modal algebra $M$:
\begin{enumerate}
    \item[\rm (i)] $M$ is profinite;
    \item[\rm (ii)] $M$ is isomorphic to the powerset of a locally-finite Kripke frame $W$;
    \item[\rm (iii)] $M$ is complete, atomic, completely additive (i.e.
    \begin{align}\label{eq:ca}
    \pos \bigvee X = \bigvee \pos X\tag{CA}
    \end{align}
    holds for each subset $X \subseteq M$) and satisfies
    %matteo 17/11/2025 (the proof also needs to be modified)
    \begin{align}\label{eq:lf}
    \bigwedge_{n\in \omega} \left\{\nec^n \bigvee X_n\right\} \leq \bigvee \left\{ \bigwedge_{n\in \omega} \left\{ \nec^n \bigvee Y\ \middle\vert\ Y \in \mathcal{P}_\omega\left(\bigcup X_n\right) \right\} \right\}\tag{LF}
    \end{align}
    %\begin{align}\label{eq:lf}
    %\bigwedge_{n\in \omega} \left\{\nec^n \bigvee X_n\right\} = \bigvee \left\{ \bigwedge_{n\in \omega} \left\{ \nec^n \bigvee (Y \cap X_n)\right\}\ \middle\vert\ Y \in \mathcal{P}_\omega\left(\bigcup X_n\right) \right\}\tag{LF}
    %\end{align}
    for every family $\{X_n\ \vert\ n \in \omega\}$ of subsets of $M$.
\end{enumerate}
\end{lem}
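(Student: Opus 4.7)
The plan is to prove (i)$\Leftrightarrow$(ii) directly from the duality, and then (ii)$\Leftrightarrow$(iii) by splitting into two directions.

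The equivalence (i)$\Leftrightarrow$(ii) is essentially a restatement of \cref{thm:produality}: under the duality between $\Pro L\MA_\fin$ and $L\KFr_\lf$, an $L$-algebra is profinite exactly when it is isomorphic to the dual $\mathcal{P}(W)$ of a locally-finite Kripke frame $W$, with $\pos A = \{w \mid \exists v\ (w \prec v\ \&\ v \in A)\}$.

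For (ii)$\Rightarrow$(iii), suppose $M = \mathcal{P}(W)$ with $W$ locally-finite. Completeness and atomicity are immediate, and \eqref{eq:ca} holds because $\pos$ is defined by an existential quantifier, which commutes with arbitrary unions. For \eqref{eq:lf}, I would argue pointwise at each $w \in W$. The inclusion RHS $\leq$ LHS is monotone in $Y$. For LHS $\leq$ RHS, the LHS at $w$ says that for every $n$ and every $n$-step successor $v$ of $w$, some element of $X_n$ contains $v$; since $w^{*}$ is finite by local finiteness, the collection of required witnesses can be consolidated into a single finite $Y \subseteq \bigcup_n X_n$ placing $w$ in the RHS.

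For (iii)$\Rightarrow$(ii), \cref{lem:caba} identifies $M$ with $\mathcal{P}(W)$ for $W$ its set of atoms. Condition \eqref{eq:ca} then forces $\pos$ to arise from the binary relation $w \prec v \iff \{w\} \leq \pos\{v\}$ on $W$, exhibiting $M$ as the modal powerset algebra of $(W,\prec)$. Finally \eqref{eq:lf} implies that $W$ is locally-finite by contraposition: if some $w^{*}$ were infinite, one could engineer a family $\{X_n\}_{n \in \omega}$ whose $n$-th slice stratifies the reachable set by path length in such a way that no finite $Y \subseteq \bigcup_n X_n$ admits a uniform witness at $w$, contradicting \eqref{eq:lf}.

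The main obstacle is the combinatorial bookkeeping around \eqref{eq:lf} in the two nontrivial directions: one must reconcile an infinite conjunction over $n \in \omega$ with a finite witness set $Y$, carefully tracking how the $n$-step-successor sets $R_n(w) \subseteq w^{*}$ interact with the choice of $Y$. A K\"onig-style enumeration argument should deliver the consolidation when $w^{*}$ is finite, and a dual construction should manufacture the obstruction when $w^{*}$ is infinite.
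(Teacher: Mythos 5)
Your architecture is the one the paper itself uses: (i)$\Leftrightarrow$(ii) is delegated to the duality (\cref{thm:produality} together with \cref{lem:caba}), and the substance is reduced to checking that a powerset algebra $\mathcal{P}(W)$ satisfies \eqref{eq:lf} exactly when $W$ is locally finite, argued pointwise at each $w$. Your (iii)$\Rightarrow$(ii) direction --- instantiate $X_n$ by the singletons of the points reachable from $w$ in $n$ steps and read finiteness of $w^*$ off the finite witness $Y$ --- is exactly the paper's argument and is fine, as is the recovery of the accessibility relation from \eqref{eq:ca} via atoms.

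The gap is precisely the step you defer to ``a K\"onig-style enumeration argument.'' The consolidation of the witnesses into one finite $Y$ does not go through: the witness certifying an $n$-step successor $v$ of $w$ must be drawn from $X_n$, because the right-hand side of \eqref{eq:lf} intersects $Y$ with $X_n$ at level $n$; and there can be infinitely many levels $n$ at which $w$ has successors even though $w^*$ is finite, namely whenever $w^*$ contains a cycle. Concretely, let $W=\{w\}\sqcup\omega$ with $\prec\,=\{(w,w)\}$: this frame is locally finite, so $\mathcal{P}(W)$ is profinite. Put $X_n=\{\{w,n\}\}$. Then $w\in\nec^n\bigvee X_n$ for every $n$, but for any finite $Y\subseteq\bigcup_n X_n$ there is some $n\geq 1$ with $Y\cap X_n=\emptyset$, and $w\notin\nec^n\bot$ because $w$ has an $n$-step successor; hence $w$ lies in the left-hand side of \eqref{eq:lf} but not in the right-hand side. (You are in good company: the paper's own proof stumbles at the same spot, setting $Y=\bigcup_n Y_n$ with each $Y_n$ finite and asserting that $Y$ is finite, which this example refutes.) What finiteness of $w^*$ actually delivers is the single-set variant $\bigwedge_n\nec^n\bigvee X=\bigvee\bigl\{\bigwedge_n\nec^n\bigvee Y\mid Y\in\mathcal{P}_\omega(X)\bigr\}$, where the finitely many points of $w^*$ all take their witnesses from one and the same $X$; that version does characterize local finiteness. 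As stated, with an arbitrary family $\{X_n\}$, the implication (ii)$\Rightarrow$(iii) is not provable because it is not true, so no amount of bookkeeping will close this step without first repairing the statement.
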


\begin{proof}
In view of the previous results and of the previous lemma, we only need to check that a Kripke frame $W$ is locally finite iff its dual powerset modal algebra $\mathcal{P}(W)$ satisfies~\eqref{eq:lf}.\footnote{
We thank J. Marqu\`es for pointing out a problem in our early formulation of condition~\eqref{eq:lf}.
} 
%silvio Nov25
%matteo 17/11/2025 (notice that the $\geq$-part of~\eqref{eq:lf} is trivially valid).  

Suppose that $W$ is locally finite, take $w\in W$ and suppose that $w\in \nec^n \bigvee X_n$ for all $n\in \omega$. For every $n\geq 0$,  let $w^n = \{w_{n,1}, \dots, w_{n,k_n}\}$ be the set of the elements of $w^*$ which are $\prec$-reachable from $w$ in $n$ steps
%matteo 17/11/2025
and not reachable in less than $n$ steps.
Let $Y$ contain exactly one element $a_{n,j}\in X_n$ such that $w_{n,j}\in a_{n,j}$, for each $n \geq 0$, $j = 1,\dots,k_n$. Observe that $Y$ is finite,
%matteo 17/11/2025
$w^n$ being finite for each $n$ and definitely equal to the empty set (by the locally finiteness assumption).
Then it is clear that 
%matteo 17/11/2025 $w\in \bigwedge_{n\in \omega} \left\{ \nec^n \bigvee (Y \cap X_n)\right\}$.
$w \in \nec^n \bigvee Y$, for all $n \in \omega$.

Vice versa, suppose that~\eqref{eq:lf} holds in $\mathcal{P}(W)$. Take $w\in W$ and let $X_n$ be equal to the
%matteo 17/11/2025 elements of $w^*$ which are $\prec$-reachable from $w$ in $n$ steps.
set $\mathcal{P}_\omega(w^*)$ of finite subsets of $w^*$, for each $n \in \omega$.
Then we have  $w \in \nec^n \bigvee X_n$ for all $n\in \omega$ %matteo 17/11/2025
(since $\bigvee X_n = w^*$),
thus by~\eqref{eq:lf} there is a finite set $Y\subseteq \bigcup X_n %matteo 17/11/2025
= \mathcal{P}_\omega(w^*)
$
such that, for all $n\in \omega$, we have
%matteo 17/11/2025 $w\in \nec^n \bigvee (Y \cap X_n)$.
$w \in \nec^n \bigvee Y$.
Then  every element $\prec$-reachable from $w$ in $n$ steps must be in
%matteo 17/11/2025 the finite set $Y_n:=Y \cap X_n$
the finite set $\bigvee Y$ ($Y$ is a finite collection of finite sets)
and so $w^*$ is finite.
%matteo 17/11/2025 because also the union of the $Y_n$ is finite. 
\end{proof}

%\section{Review on trees}
\section{Infinitary Proof Theory}\label{sec:calc}

In this section we review the content of~\cref{lem:lf} in a syntactic context: this is a preliminary step to make a connection between infinitary syntax and the regularity and exactness property of the category  $\Pro L\MA_\fin^{op}\simeq L\KFr_\lf$.

Our syntax  is a modal extension of the propositional part of the calculus presented in~\cite[Ch4,\S 22]{Tak}. We  have infinite disjunctions and conjunctions, as well as infinitary sequents; however, our proofs will be well founded trees.\footnote{The only difference with the calculus in~\cite[Ch4,\S 22]{Tak} is the fact that our conjunction and disjunction connectives apply to sets of formulas and not to sequences of formulas (similarly, sequents are pairs of sets of formulas and not pairs of sequences of formulas). We preferred this simplified approach because the main aim of this paper is that of connecting infinitary syntax with lattice-based algebraic structures.}

\subsection{Formulae}

%In what follows, we provide a sequent calculus for the infinitary version of a modal logic $L$ axiomatizing a finitely approximable variety (generated by its finite members: it corresponds to $L$ having the finite model property) of modal algebras. The language is an extension of the standard one, enriched with infinitary disjunction and conjunction symbols, to be applied to sets of formulas (of bounded cardinality). \textcolor{red}{As we said, a rule modifies infinitely many formulas...}

Fix a set $G$ and let $\kappa$ be a regular cardinal. For each ordinal $\alpha$, consider the set $\Formulas_{\kappa,\alpha}(G)$ of $\kappa$-ary formulas of \emph{height} (less or equal than) $\alpha$ over the propositional variables $G$, namely
\begin{enumerate}
    \item $\Formulas_{\kappa,0}(G) \coloneqq G$;
    \item $\Formulas_{\kappa,\alpha + 1}(G) \coloneqq \Formulas_{\kappa,\alpha}(G) \cup \Formulas_{\kappa,\alpha}^+(G)$, where $\Formulas_{\kappa,\alpha}^+(G)$ is the set
    $$\{\neg \varphi,\ \nec \varphi\ \vert\ \varphi \in \Formulas_{\kappa,\alpha}(G)\} \cup \left\{\bigwedge S,\ \bigvee S\ \middle\vert\ S \subseteq \Formulas_{\kappa,\alpha}(G),\ |S| < \kappa \right\}$$
    \item if $\beta$ is a limit ordinal, then $\Formulas_{\kappa,\beta}(G) \coloneqq \bigcup_{\alpha < \beta} \Formulas_{\kappa,\alpha}(G)$.
\end{enumerate}

Observe that $\alpha \leq \beta$ implies $\Formulas_{\kappa,\alpha}(G) \subseteq \Formulas_{\kappa,\beta}(G)$. Moreover, if $\alpha \geq \kappa$, then $\Formulas_{\kappa,\kappa}(G) = \Formulas_{\kappa,\alpha}(G)$ (this is established by transfinite induction on $\alpha$, using regularity of $\kappa$), so that
$$\Formulas_\kappa(G) \coloneqq \bigcup_{\alpha \in \Ord} \Formulas_{\kappa,\alpha}(G) = \Formulas_{\kappa,\kappa}(G)$$
is a set (called the set of \emph{$\kappa$-ary formulas over the variables $G$}). $\Formulas_\omega(G)$ can be identified with the set of standard finitary formulas over $G$.

%From now on, $L$ will denote a logic, say in the countable set of propositional variables $\Prop = \{p,q,r,\dots\}$, axiomatizing a finitely approximable variety of modal algebras. We can identify the axioms of $L$ with a subset of $\Formulas_\omega(\Prop)$.

We have the following standard notation:
\begin{enumerate}
    \item $\varphi \wedge \psi \coloneqq \bigwedge \{\varphi,\psi\}$ and $\top:=\bigwedge \{\emptyset\}$ (similarly for $\vee, \bot$);
    \item $\varphi \rightarrow \psi \coloneqq \neg \varphi \vee \psi$;
    \item $\varphi \leftrightarrow \psi \coloneqq (\varphi \rightarrow \psi) \wedge (\psi \rightarrow \varphi)$;
    \item $\pos \varphi \coloneqq \neg \nec \neg \varphi$;
    \item $\nec^n \varphi \coloneqq \nec \cdots \nec \varphi$ $n$-times, for $n \in \omega$, with $\nec^0 \varphi \coloneqq \varphi$ (similarly for $\pos^n$);
    \item $\nec^* \varphi \coloneqq \bigwedge \{\nec^n \varphi\ \vert\ n \in \omega\}$ (similarly for $\pos^*$).
\end{enumerate}

In the following, we use the letter $\varphi$ for a generic formula in $\Formulas_\kappa(G)$ and the letters $\Gamma$, $\Gamma'$, $\Delta$, $\Delta'$ (as well as $S, S',\cdots$) for subsets of $\Formulas_\kappa(G)$. An expression of the form $\Gamma \Rightarrow \Delta$ is called a \emph{sequent}. When we write sets of formulas  separated by a comma, we mean their set-union (for singletons, we shall avoid parentheses).

\subsection{Rules}

We now give the rules of the sequent calculus for our logic.
The rules are obtained from the rules in~\cite[Ch.4,\S 22]{Tak}, by adding specific rules for modalities. Notice that, unlike in customary finitary versions of the sequent calculus, a rule may modify simultaneously many formulae of the lower sequent and a rule may have infinitely many premises: this is needed to keep our proofs well founded.
Similarly, in the Cut rule, infinitely many formulas $\phi \in S$ may be simultaneously cut away. 
In~\cite[Ch.4, Thm. 22.17]{Tak} a semantic cut elimination proof for the non-modal part of the calculus below is supplied; finding a cut elimination proof for a suitable reformulation of our calculus is a major open problem. 
\vskip .5cm

Axioms:

\begin{align*}
\prftree[r]{\scriptsize Ax}
{}
%{\varphi, \Gamma \Rightarrow \Delta, \varphi}
{\varphi \Rightarrow \varphi}
\end{align*}

Structural Rules:

\begin{align*}
\prftree[r]{\scriptsize W}
{\Gamma \Rightarrow \Delta}
{\Gamma, \Gamma' \Rightarrow \Delta, \Delta'}
\end{align*}

\begin{align*}
\prftree[r]{\scriptsize
\text{$\begin{matrix}
    \text{Cut} \\
    (\varphi \in S)
\end{matrix}$}}
%Cut}{\scriptsize (\text{$\varphi \in S$})}
{\cdots}{\Gamma \Rightarrow \Delta, \varphi}{\cdots;}{S, \Gamma' \Rightarrow \Delta'}
{\Gamma, \Gamma' \Rightarrow \Delta, \Delta'}
\end{align*}
%for some $S \subseteq \Formulas_\kappa(G)$.

Inference Rules:

\begin{align*}
\prftree[r]{\scriptsize L\text{$\neg$}}
{\Gamma \Rightarrow \Delta, S}
{\neg S, \Gamma \Rightarrow \Delta}
\qquad
\prftree[r]{\scriptsize R\text{$\neg$}}
{S, \Gamma \Rightarrow \Delta}
{\Gamma \Rightarrow \Delta, \neg S}
\end{align*}
for some $S \subseteq \Formulas_\kappa(G)$, with $\neg S \coloneqq \{\neg \varphi\ \vert\ \varphi \in S\}$.

\begin{align*}
\prftree[r]{\scriptsize L\text{$\bigwedge$}}
{\bigcup \mathscr{S}, \Gamma \Rightarrow \Delta}
{\left\{\bigwedge S\ \middle\vert\ S \in \mathscr{S} \right\}, \Gamma \Rightarrow \Delta}
\qquad
\prftree[r]{\scriptsize
\text{$\begin{matrix}
    \text{R}\!\bigwedge \\
    (c \in \Ch(\mathscr{S}))
\end{matrix}$}}
{\cdots}{\Gamma \Rightarrow \Delta, \{c(S)\ \vert\ S \in \mathscr{S} \}}{\cdots}
{\Gamma \Rightarrow \Delta, \left\{\bigwedge S\ \middle\vert\ S \in \mathscr{S} \right\}}
\end{align*}
for some $\mathscr{S} \subseteq \mathcal{P}(\Formulas_\kappa(G))$ such that  $|S|<\kappa$ for each $S \in \mathscr{S}$.

\begin{align*}
\prftree[r]{\scriptsize
\text{$\begin{matrix}
    \text{L}\!\bigvee \\
    (c \in \Ch(\mathscr{S}))
\end{matrix}$}}
{\cdots}{\{c(S)\ \vert\ S \in \mathscr{S} \}, \Gamma \Rightarrow \Delta}{\cdots}
{\left\{\bigvee S\ \middle\vert\ S \in \mathscr{S} \right\}, \Gamma \Rightarrow \Delta}
\qquad
\prftree[r]{\scriptsize R\text{$\bigvee$}}
{\Gamma \Rightarrow \Delta, \bigcup \mathscr{S}}
{\Gamma \Rightarrow \Delta, \left\{\bigvee S\ \middle\vert\ S \in \mathscr{S} \right\}}
\end{align*}
for some $\mathscr{S} \subseteq \mathcal{P}(\Formulas_\kappa(G))$ such that  $|S|<\kappa$ for each $S \in \mathscr{S}$.

\begin{align*}
\prftree[r]{\scriptsize Nec}
{S \Rightarrow \varphi}
{\nec S \Rightarrow \nec \varphi}
\end{align*}
for some $S \subseteq \Formulas_\kappa(G)$, with $\nec S \coloneqq \{\nec \varphi\ \vert\ \varphi \in S\}$.

\begin{align*}
\prftree[r]{\scriptsize
\text{$\begin{matrix}
    \text{lf} \\
    (I \in \mathcal{P}_\omega\left(\bigcup_n S_n\right))
\end{matrix}$}}
{\cdots}
{\left\{ \nec^n \bigvee
%matteo 17/11/2025 (I \cap S_n)
I\ %
\middle\vert\ n \in \omega\right\}, \Gamma \Rightarrow \Delta}
{\cdots}
{\left\{\nec^n \bigvee S_n\ \middle\vert\ n \in \omega\right\}, \Gamma \Rightarrow \Delta}
\end{align*}
for some $S_n \subseteq \Formulas_\kappa(G)$ such that $|S_n|<\kappa$, for $n \in \omega$.

There are additional rules 
%sg july25
(see below)
in case the calculus comprises a normal modal logic (with finite model property) $L$ and a theory $T$. A theory $T$ is just a set of formulas in $\Formulas_\kappa(G)$, called \emph{theory axioms}.

On the other hand, recall that logics are defined in correspondence with varieties of modal algebras, so that they are axiomatized via axiom schemata in a countable language; consequently, a logic $L$ gives rise to the additional axioms set given by the formulae $L_{(G,\kappa)}$ obtained from some $\chi(p_1, \dots, p_n) \in L$, by replacing the propositional variables $p_1,\dots,p_n$ by some $\varphi_1,\dots,\varphi_n \in \Formulas_\kappa(G)$. We call the formulae in $L_{(G,\kappa)}$ \emph{logical axioms} (and we may  occasionally write $(G,\kappa,L,T)$ simply as $L$).

When a logic $L$ and a theory $T$ are fixed, the following additional rules can be added to our calculus:

%\begin{defn}
%A \emph{theory} is a triple $\mathcal{T} = (G,\kappa,T)$ given by a set $G$, a regular cardinal $\kappa$ and a set $T$ of formulas in $\Formulas_\kappa(G)$. If $T$ contains a single formula $\tau \in \Formulas_\kappa(G)$, we will simply write $\mathcal{T} = (G,\kappa,\tau)$.
%\end{defn}

\begin{align*}
\prftree[r]{\scriptsize T,L} 
{S, \Gamma \Rightarrow \Delta}
{\Gamma \Rightarrow \Delta}
\end{align*}
where $S$ is a subset of $T\cup L_{(G,\kappa)}$ 
%\footnote{
%sg july25  ho messo la nota direttamente nel testo
(in view of the Cut Rule, the above additional rules are equivalent to the additional axioms
$\Rightarrow \phi$, for $\phi \in T\cup L_{(G,\kappa)}$).
%}

%Observe that the application of L$\bigvee$ to sequents of the form $\bigvee \emptyset, \Gamma \Rightarrow \Delta$ gives rules with no premises, which we will consider as axioms (similarly to R$\bigwedge$ applied to sequents of the form $\Gamma \Rightarrow \Delta, \bigwedge \emptyset$).

\subsection{Proofs}

The above rules are parameterized by the following four parameters: (i) a set of variables $G$; (ii)  a regular cardinal $\kappa$ giving a  bound for conjunctions and disjunctions; (iii) a logic $L$; (iv) a theory $T$.
We shall call 
$$
\mathcal{T}
= (G_\mathcal{T},\kappa_\mathcal{T}, L_\mathcal{T},T_\mathcal{T})
$$
the calculus with parameters
$G_\mathcal{T},\kappa_\mathcal{T}, L_\mathcal{T},T_\mathcal{T}$.

The formal notion of a $\mathcal{T}$-\emph{proof} (or  $\mathcal{T}$-\emph{derivation}) is the standard one: proofs are \emph{well-founded trees}\footnote{A well founded tree can be defined as a non empty 
%sg july 25 preordered set 
poset
$(\tau, \preccurlyeq)$ such that (1) for every $a \in \tau$ the set $\{b\in \tau \mid b \preccurlyeq a\} $ is finite and linearly ordered; (2) there are no strict infinite ascending chains $a_0\prec a_1\prec \cdots$ of elements of $\tau$. The latter condition allows one to  use of a strong induction reasoning principle, known as ``bar induction''.} labelled by $\Seq_{\kappa_\mathcal{T}}(G_\mathcal{T})$ (i.e. by  sequents of formulas in $\Formulas_{\kappa_\mathcal{T}}(G_\mathcal{T})$), where the labellings follow rule applications. Since the tree underlying a proof is well founded, it is possible to assign an ordinal to a proof. A formal inductive definition is as follows (induction is up to $\lambda^+$, if $\lambda$ is the cardinality of the set of our sequents):
\begin{itemize}
 \item a sequent $\phi\Rightarrow \phi$ is a $\mathcal{T}$-proof (with  ordinal rank 1 and end-sequent $\phi\Rightarrow \phi$);
 \item if $\{\Gamma_i\Rightarrow \Delta_i\}_i$ is a set of sequents, if  \begin{align*}
\prftree[r]{\scriptsize }
{\cdots \Gamma_i \Rightarrow \Delta_i\cdots }
{\Gamma \Rightarrow \Delta}
\end{align*}
 is a rule of $\mathcal{T}$ and 
  $\pi_i$ are $\mathcal{T}$-proofs (with ordinals ranks $\alpha_i$ and end-sequents $\Gamma_i\Rightarrow \Delta_i$), then 
  \begin{align*}
\prftree[r]{\scriptsize }
{\cdots \pi_i\cdots }
{\Gamma \Rightarrow \Delta}
\end{align*}
is a $\mathcal{T}$-proof (with ordinal rank $(\sup_i \,\alpha_i)+1$
and end sequent $\Gamma \Rightarrow \Delta$).
\end{itemize}

For $\mathcal{T}
= (G_\mathcal{T},\kappa_\mathcal{T}, L_\mathcal{T},T_\mathcal{T})$,
we write ${\mathcal{T}} \vdash \Gamma\Rightarrow \Delta$
or $(G_\mathcal{T},\kappa_\mathcal{T}, L_\mathcal{T},T_\mathcal{T})\vdash \Gamma \Rightarrow \Delta$
(or simply 
 $\vdash \Gamma\Rightarrow \Delta$ if confusion does not arise)
 to mean that there is a $\mathcal{T}$-proof with end sequents $\Gamma\Rightarrow \Delta$.
 If $\Gamma=\emptyset$ and $\Delta=\{\phi\}$, we write just
 ${\mathcal{T}} \vdash \phi$
 instead of ${\mathcal{T}} \vdash ~\Rightarrow \{\phi\}$.
 
We now show how to discharge theory axioms:

\begin{thm}[Deduction Theorem]\label{prop:ded}
Given a calculus 
$\mathcal{T}
= (G_\mathcal{T},\kappa_\mathcal{T}, L_\mathcal{T},T_\mathcal{T})$
such that $|T|<\kappa$, we have that
$$
(G_\mathcal{T},\kappa_\mathcal{T}, L_\mathcal{T},T_\mathcal{T})
%\mathcal{T} 
\vdash \Gamma \Rightarrow \Delta ~~~~~\iff~~~~~ 
(G_\mathcal{T},\kappa_\mathcal{T}, L_\mathcal{T},\emptyset)%(G,\kappa,\emptyset) 
\vdash 
%\nec^* \bigwedge T
\nec^* T, \Gamma \Rightarrow \Delta$$
for any sequent $\Gamma \Rightarrow \Delta \in \Seq_{\kappa_{\mathcal{T}}}(G_\mathcal{T})$
(here %$\nec^* \bigwedge T$
$\nec^* T$ is the set of formulas of the kind $\nec^n \phi$, for $n\geq0$ and $\phi \in T$).
\end{thm}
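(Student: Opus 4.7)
The plan is to prove the two directions separately; the reverse direction is direct, while the forward direction goes by transfinite induction on the ordinal rank of the derivation.

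For the direction $\Leftarrow$, assume $(G,\kappa,L,\emptyset)\vdash \nec^* T,\Gamma\Rightarrow \Delta$. Working inside $(G,\kappa,L,T)$, each $\phi\in T$ yields $\,\Rightarrow\phi$: the axiom $\phi\Rightarrow\phi$ together with one application of the (T,L) rule (taking $S=\{\phi\}$) does it. Iterating Nec---which on an empty antecedent promotes $\,\Rightarrow\phi$ to $\,\Rightarrow\nec\phi$---gives $\,\Rightarrow\nec^n\phi$ for every $n\in\omega$ and every $\phi\in T$. A single application of Cut with $S:=\nec^* T$ against $\nec^* T,\Gamma\Rightarrow\Delta$ then removes the whole context at once, producing $\Gamma\Rightarrow\Delta$.

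For the direction $\Rightarrow$, I show by induction on the ordinal rank of a $(G,\kappa,L,T)$-derivation of $\Gamma\Rightarrow\Delta$ that $(G,\kappa,L,\emptyset)\vdash \nec^* T,\Gamma\Rightarrow\Delta$. All structural rules, Cut, the negation rules, the $\bigwedge/\bigvee$-rules and the lf-rule propagate the additional context $\nec^* T$ transparently because they do not act on formulas outside $\Gamma\cup\Delta$, so only two cases need attention. In the Nec case, from a premise $S\Rightarrow\phi$ with conclusion $\nec S\Rightarrow\nec\phi$, the inductive hypothesis yields $\nec^* T,S\Rightarrow\phi$ in the theory-free calculus. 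Re-applying Nec produces $\nec(\nec^* T)\cup\nec S\Rightarrow\nec\phi$, and because $\nec(\nec^n\phi)=\nec^{n+1}\phi\in\nec^* T$ we have $\nec(\nec^* T)\subseteq\nec^* T$; a weakening therefore gives the required $\nec^* T,\nec S\Rightarrow\nec\phi$. This is precisely why the $\nec^*$-closure in the statement cannot be replaced by $T$ itself.

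In the (T,L) case, the last rule eliminates some $S\subseteq T\cup L_{(G,\kappa)}$ from $S,\Gamma\Rightarrow\Delta$; split $S=S_T\cup S_L$ with $S_T\subseteq T$ and $S_L\subseteq L_{(G,\kappa)}$. In the theory-free calculus the (T,L) rule still applies to $L$-axioms, so $\,\Rightarrow\phi$ is derivable for each $\phi\in S_L$; for $\phi\in S_T$ we start from the axiom $\phi\Rightarrow\phi$. Weakening brings all these premises into the uniform shape $S_T\Rightarrow\phi$, after which a single simultaneous Cut against the IH-sequent $\nec^* T,S,\Gamma\Rightarrow\Delta$ produces $S_T,\nec^* T,\Gamma\Rightarrow\Delta$, which equals $\nec^* T,\Gamma\Rightarrow\Delta$ since $S_T\subseteq T\subseteq\nec^* T$. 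The main obstacle I expect is precisely this uniformization: the Cut rule demands a common left context for all premises indexed by $\phi\in S$, so the heterogeneous sources---bare axioms for $S_T$ and (T,L)-derived sequents for $S_L$---must be aligned via Weakening before a single cut can dispose of all of $S$ at once.
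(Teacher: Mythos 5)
Your proof is correct and follows exactly the route the paper intends: the paper's proof is just ``an easy induction on the ordinal rank of the proof,'' and your write-up supplies the details of that induction, correctly isolating the two non-trivial cases (Nec, which forces the $\nec^*$-closure, and the T,L rule, where $S_T\subseteq\nec^* T$ lets the cut absorb the theory axioms) together with the easy converse via the T,L rule, iterated Nec, and a simultaneous Cut.
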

\begin{proof} An easy induction on the ordinal rank of the proof.
\end{proof}

\begin{comment}
%[dovrebbe bastare questa osservazione...]

%We prove the implication from left to right by induction on the rank of a derivation $l \colon \mathscr{A} \longrightarrow \Seq_\kappa(G)$ for $\mathcal{T} \vdash_L \Gamma \Rightarrow \Delta$. If $\Rank(l) = 0$, i.e. $\mathscr{A} = \{\emptyset\}$ and $l(\emptyset) = \Gamma \Rightarrow \Delta$ is a (standard or extra $\mathcal{T}$-) axiom, then we obtain $(G,\kappa,\emptyset) \vdash_L \nec^* \bigwedge T, \Gamma \Rightarrow \Delta$ as
%\begin{align*}
%\prftree[r]{\scriptsize L\text{$\bigwedge$}}
%{\prftree[r]{\scriptsize W}
%{\prftree[r]{\scriptsize L\text{$\bigwedge$}}
%{\prftree[r]{\scriptsize Ax}
%{T, \Gamma \Rightarrow \Delta}}
%{\bigwedge T, \Gamma \Rightarrow \Delta}}
%{\left\{\nec^n \bigwedge T\ \middle\vert\ n \in \omega\right\}, \Gamma \Rightarrow \Delta}}
%{\nec^* \bigwedge T, \Gamma \Rightarrow \Delta}
%\end{align*}
%since one between $\Gamma \cap \Delta \neq \emptyset$ and $T \cap \Delta \neq \emptyset$ holds. If $\Rank(l) = \alpha + 1$, then we have the following cases, depending on the last rule applied.
%\begin{enumerate}
%    \item If the last rule applied is not Nec, then
%    \begin{align*}
%    \prftree[r]{\scriptsize \text{$(a \in \Succ_\mathscr{A}(\emptyset))$}}
%    {\cdots}{l'(a)}{\cdots}
%    {l'(\emptyset)}
%    \end{align*}
%    where $l'(\emptyset)$ and $l'(a)$ are the sequents obtained by adding to the left of $l(\emptyset)$ and $l(a)$ the formula $\nec^* \bigwedge T$, is again a rule. By induction hypothesis, since the restriction $l|_{\mathscr{A}_a}$ gives $\mathcal{T} \vdash_L l(a)$ and $\Rank(\mathscr{A}_a) \leq \alpha$, we obtain a derivation $k^a \colon \mathscr{B}^a \longrightarrow \Seq_\kappa(G)$ for $(G,\kappa,\emptyset) \vdash_L l'(a)$, for each $a \in \Succ_\mathscr{A}(\emptyset)$. The composition $k \circ l'$ gives $(G,\kappa,\emptyset) \vdash_L \nec^* \bigwedge T, \Gamma \Rightarrow \Delta$.
%    \item If the last rule applied is
%    \begin{align*}
%    \prftree[r]{\scriptsize Nec}
%    {S \Rightarrow \varphi}
%    {\nec S \Rightarrow \nec \varphi}
%    \end{align*}
%    then we can compose
%    \begin{align*}
%    \prftree[r]{\scriptsize L\text{$\bigwedge$}}
%    {\prftree[r]{\scriptsize W}
%    {\prftree[r]{\scriptsize Nec}
%    {\prftree[r]{\scriptsize Cut}
%    {\prftree[noline]{}
%    {\nec^* \bigwedge T, S \Rightarrow \varphi}}
%    {\prftree[r]{\scriptsize
%    \text{$\begin{matrix}
%    \text{R}\!\bigwedge \\
%    (m \in \omega)
%    \end{matrix}$}}
%    {\cdots}
%    {\prftree[r]{\scriptsize Ax}
%    {\left\{\nec^n \bigwedge T\ \middle\vert\ n \in \omega\right\} \Rightarrow \nec^m \bigwedge T}}
%    {\prftree[noline]{}
%    {\cdots}}
%    {\left\{\nec^n \bigwedge T\ \middle\vert\ n \in \omega\right\} \Rightarrow \nec^* \bigwedge T}}
%    {\left\{\nec^n \bigwedge T\ \middle\vert\ n \in \omega\right\}, S \Rightarrow \varphi}}
%    {\left\{\nec^n \bigwedge T\ \middle\vert\ n \in \omega,\ n \geq 1\right\}, \nec S \Rightarrow \nec \varphi}}
%    {\left\{\nec^n \bigwedge T\ \middle\vert\ n \in %\omega\right\}, \nec S \Rightarrow \nec \varphi}}
%    {\nec^* \bigwedge T, \nec S \Rightarrow \nec \varphi}
%    \end{align*}
%    with a derivation for $(G,\kappa,\emptyset) \vdash_L \nec^* \bigwedge T, S \Rightarrow \varphi$ (which exists, as before, by induction hypothesis), to obtain $(G,\kappa,\emptyset) \vdash_L \nec^* \bigwedge T, \nec S \Rightarrow \nec \varphi$.
%\end{enumerate}

%Vice versa, composing
%\begin{align*}
%\prftree[r]{\scriptsize Cut}
%{\prftree[r]{\scriptsize
%\text{$\begin{matrix}
%    \text{R}\!\bigwedge \\
%    (n \in \omega)
%\end{matrix}$}}
%{\cdots}
%{\prftree[r]{\scriptsize Nec}
%{\prftree
%{\prftree[r]{\scriptsize
%\text{$\begin{matrix}
%    \text{R}\!\bigwedge \\
%    (\varphi \in T)
%\end{matrix}$}}
%{\prftree[r]{\scriptsize \text{$\mathcal{T}$}-Ax}
%{\Rightarrow \varphi}}
%{\Rightarrow \bigwedge T}}
%{\vdots}}
%{\Rightarrow \nec^n \bigwedge T}}
%{\prftree[noline]{}
%{\cdots}}
%{\Rightarrow \nec^* \bigwedge T}}
%{\prftree[noline]
%{\nec^* \bigwedge T, \Gamma \Rightarrow \Delta}}
%{\Gamma \Rightarrow \Delta}
%\end{align*}
%with a derivation for $(G,\kappa,\emptyset) \vdash_L \nec^* \bigwedge T, \Gamma \Rightarrow \Delta$, gives us $\mathcal{T} \vdash \Gamma \Rightarrow \Delta$.
%\end{proof}

\end{comment}

\section{Algebraic Semantics}\label{sec:sem}

We show that our calculi are valid and complete with respect to profinite modal algebras (eventually restricted to profinite modal algebras arising in a suitable variety of modal algebras). The idea is to apply the standard Lindenbaum construction, however we need some care, because our calculi include conjunctions and disjunctions up to a fixed cardinality $\kappa$. The problem obviously concerns only completeness (not soundness) and it is solved in the following way: we show that, \emph{for sufficiently large $\kappa$, depending on the cardinality of the set $G$ of the propositional variables, our Lindenbaum algebras are indeed complete as lattices}: this dependence on the set $G$ is the major difference with respect to the customary finitary logics.

Given a profinite modal algebra $M$, let $\mathcal{U}(M)$
be its underlying set (more generally, let $\mathcal{U}$ be the forgetful functor from profinite modal algebras to $\bf Set$).
 Given a function $v_0  \colon G \longrightarrow \mathcal{U}(M)$ (called a \emph{valuation}), we can extend it in the domain to a function $v \colon \Formulas_\kappa(G) \longrightarrow \mathcal{U}(M)$ by induction as follows.
 
 If $v_\alpha \colon \Formulas_{\kappa,\alpha}(G) \longrightarrow \mathcal{U}(M)$ has been defined, then the function 
 $$v_{\alpha+1} \colon \Formulas_{\kappa,\alpha+1}(G) \longrightarrow \mathcal{U}(M)$$ is so specified
\begin{enumerate}
    \item if $\varphi \in \Formulas_{\kappa,\alpha}(G)$, then $v_{\alpha+1}(\varphi) = v_\alpha(\varphi)$;
    \item if $\varphi = \heart \psi$ for some $\psi \in \Formulas_{\kappa,\alpha}(G)$ and $\heart \in \{\neg, \nec\}$, then
    $$v_{\alpha+1}(\varphi) = \heart v_\alpha(\psi);$$
    \item if $\varphi = \heart S$ for some $S \subseteq \Formulas_{\kappa,\alpha}(G)$ such that $|S|<\kappa$ and $\heart \in \{\bigwedge, \bigvee\}$, then
    $$v_{\alpha+1}(\varphi) = \heart v_\alpha(S).$$
\end{enumerate}
If $\beta$ is a limit ordinal, then $v_\beta \coloneqq \bigcup_{\alpha<\beta} v_\alpha \colon \Formulas_{\kappa,\beta}(G) \longrightarrow \mathcal{U}(M)$. Globally, we have $v_\kappa \colon \Formulas_\kappa(G) \longrightarrow \mathcal{U}(M)$ (we  write $v_\kappa$ simply as $v$). We can extend valuations to sequents, by setting $v(\Gamma \Rightarrow \Delta) \coloneqq v(\bigwedge \Gamma \rightarrow \bigvee \Delta)$.
\begin{defn}
Given $\mathcal{T} = (G,\kappa, L,T)$ and a sequent $\Gamma \Rightarrow \Delta \in \Seq_\kappa(G)$, we write $\mathcal{T} \vDash \Gamma \Rightarrow \Delta$ to say that, for any profinite $L$-algebra $M$ and any valuation $v \colon G \longrightarrow \mathcal{U}(M)$,
\begin{align}\label{eq:sem}
\text{if } v(\varphi) = \top \text{ for each } \varphi \in T, ~~~~~ \text{then } v(\Gamma \Rightarrow \Delta) = \top\tag{\text{Sem}}
\end{align}
\end{defn}
Observe that $v(\Gamma \Rightarrow \Delta) = \top$ if and only if $\bigwedge v(\Gamma) \leq \bigvee v(\Delta)$.

\begin{prop}[Soundness]\label{prop:sound}
Given $\mathcal{T} = (G,\kappa,L,T)$ and a sequent $\Gamma \Rightarrow \Delta \in \Seq_\kappa(G)$,
$$\text{if } \mathcal{T} \vdash \Gamma \Rightarrow \Delta, ~~~~~\text{then } \mathcal{T} \vDash \Gamma \Rightarrow \Delta.$$
\end{prop}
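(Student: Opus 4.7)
The plan is to proceed by transfinite induction on the ordinal rank of a $\mathcal{T}$-derivation. Fix a profinite $L$-algebra $M$ and a valuation $v\colon G \to \mathcal{U}(M)$ with $v(\varphi) = \top$ for every $\varphi \in T$; I will show that, for every $\mathcal{T}$-derivable sequent $\Gamma \Rightarrow \Delta$, the inequality $\bigwedge v(\Gamma) \leq \bigvee v(\Delta)$ holds in $M$. The base case $\varphi \Rightarrow \varphi$ is immediate, so the work is to verify the inductive step rule by rule.

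The structural rules (W, Cut), the negation rules (L$\neg$, R$\neg$), and the ``flattening'' halves of the infinitary connective rules (L$\bigwedge$ and R$\bigvee$, which merely exchange $\bigwedge\bigcup\mathscr{S}$ with $\bigwedge\{\bigwedge S : S\in \mathscr{S}\}$, and dually) reduce to standard manipulations in the complete Boolean algebra underlying $M$. The choice-function rules R$\bigwedge$ and L$\bigvee$ require the infinitary distributive law~\eqref{eq:id}: taking the meet over all $c \in \Ch(\mathscr{S})$ of the premise inequalities $\bigwedge v(\Gamma) \leq \bigvee v(\Delta) \vee \bigvee_S v(c(S))$ and invoking complete distributivity — valid in $M$ by \cref{lem:lf,lem:caba} — rewrites $\bigwedge_c \bigvee_S v(c(S))$ as $\bigvee_S \bigwedge v(S)$, yielding the conclusion. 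The Nec rule uses that $\nec$ preserves arbitrary meets in $M$, i.e.\ the dual of complete additivity~\eqref{eq:ca} (obtained from $\nec x = \neg \pos \neg x$). The T, L axiom rules use that every $\varphi \in T$ satisfies $v(\varphi) = \top$ by hypothesis on $v$, while every $\varphi \in L_{(G,\kappa)}$ satisfies $v(\varphi) = \top$ because it is a substitution instance of an $L$-axiom and $M$ is an $L$-algebra; in both cases the bound from the premise absorbs the extra antecedents $v(S) \subseteq \{\top\}$ freely.

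The main obstacle is the lf rule. Its premises state, for every finite $I \subseteq \bigcup_n S_n$, that
\[\textstyle
\left(\bigwedge_n \nec^n \bigvee v(I \cap S_n)\right) \wedge \bigwedge v(\Gamma) \leq \bigvee v(\Delta).
\]
Identity~\eqref{eq:lf}, applied with $X_n := v(S_n)$, decomposes $\bigwedge_n \nec^n \bigvee v(S_n)$ as a join ranging over finite $Y \subseteq \bigcup_n v(S_n)$ of $\bigwedge_n \nec^n \bigvee(Y \cap v(S_n))$; complete distributivity then lets us push the conjunction with $\bigwedge v(\Gamma)$ inside the join, reducing the goal to showing $(\bigwedge_n \nec^n \bigvee(Y \cap v(S_n))) \wedge \bigwedge v(\Gamma) \leq \bigvee v(\Delta)$ for each finite $Y$. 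The delicate step is mediating between the finite $Y \subseteq \bigcup v(S_n)$ coming from~\eqref{eq:lf} (at the algebraic level) and the finite $I \subseteq \bigcup S_n$ indexing the rule's premises (at the syntactic level): for each $y \in Y$ and each $n$ with $y \in v(S_n)$ one picks a syntactic witness $\psi_{y,n} \in S_n$ with $v(\psi_{y,n}) = y$, and assembles these into a finite $I_Y$ so that $Y \cap v(S_n) \subseteq v(I_Y \cap S_n)$ for every $n$; the premise instance for $I_Y$ then majorises the $Y$-summand, and joining over $Y$ produces the conclusion.
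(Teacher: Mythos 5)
Your overall strategy --- transfinite induction on the ordinal rank of the derivation, with the infinitary distributive law~\eqref{eq:id} handling R$\bigwedge$/L$\bigvee$, complete additivity~\eqref{eq:ca} handling Nec, and~\eqref{eq:lf} handling lf --- is exactly the argument the paper intends, and every case except the last is carried out correctly.

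The gap is in the lf case, precisely at the step you yourself flag as delicate. You pick, for each $y\in Y$ and each $n$ with $y\in v(S_n)$, a witness $\psi_{y,n}\in S_n$ with $v(\psi_{y,n})=y$, and claim these assemble into a \emph{finite} set $I_Y$ with $Y\cap v(S_n)\subseteq v(I_Y\cap S_n)$ for every $n$. But these witnesses are indexed by pairs $(y,n)$ in which $y$ ranges over the finite set $Y$ while $n$ ranges over all of $\omega$: if the sets $S_n$ are pairwise disjoint and some $y\in Y$ lies in $v(S_n)$ for infinitely many $n$ (for instance $S_n=\{\chi_n\}$ with pairwise distinct formulas $\chi_n$ all evaluating to the same element of $M$), then any $I$ satisfying $Y\cap v(S_n)\subseteq v(I\cap S_n)$ for all $n$ must meet infinitely many of the $S_n$ and is therefore infinite. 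So the premise instance that is supposed to majorise the $Y$-summand need not exist among the (finitely indexed) premises of the rule, and the inductive step for lf does not go through as written. I would add that the difficulty is not an artifact of your write-up: the same finiteness assertion (``notice that $Y$ is finite'') occurs in the proof of \cref{lem:lf}, and the mismatch between finite subsets of $\bigcup_n S_n$ at the syntactic level and finite subsets of $\bigcup_n v(S_n)$ at the algebraic level cannot be bridged by choosing witnesses; closing it seems to require either reformulating the rule and the identity in terms of a single set $S$ (as in \cref{prop:eq}(v)) or some additional hypothesis tying the $S_n$ together.
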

\begin{proof}
By transfinite induction on the rank of the proof, using \cref{eq:id}, \cref{eq:ca} and \cref{eq:lf}.
\end{proof}

We now want to prove completeness. Given 
$\mathcal{T} = (G,\kappa,L,T)$
%a theory $\mathcal{T} = (G,\kappa,T)$ 
and formulas $\varphi, \psi \in \Formulas_\kappa(G)$, we set
$$\varphi \sim_\mathcal{T} \psi ~~~~~\iff~~~~~ \mathcal{T} \vdash \varphi \leftrightarrow \psi$$
The relation $\sim_\mathcal{T}$ is obviously an equivalence relation over the set $\Formulas_\kappa(G)$ (transitivity can be proved using the Cut rule). Observe that $\varphi \sim_\mathcal{T} \psi$ is equivalent to the fact that both $\mathcal{T} \vdash \varphi \Rightarrow \psi$ and $\mathcal{T} \vdash \psi \Rightarrow \varphi$ hold. We can then introduce the \emph{Lindenbaum-Tarski algebra} associated with $\mathcal{T}$
$$\LT_L(G,\kappa,T) \coloneqq \Formulas_\kappa(G)/\sim_{\mathcal{T}}$$
$[\varphi]_\mathcal{T}$ will denote the $\sim_\mathcal{T}$-equivalence class of $\varphi \in \Formulas_\kappa(G)$. Sup's and Inf's (on subsets of cardinality less than $\kappa$) and the $\nec$ operator are all defined on representatives of equivalence classes.  Observe in particular that $[\varphi]_\mathcal{T} \leq [\psi]_\mathcal{T}$ (defined  as usual as $x \leq y$ if and only if $x \wedge y = x$)
holds if and only if $\mathcal{T} \vdash \varphi \Rightarrow \psi$.
\begin{prop}\label{prop:kcompl}
$\LT_L(G,\kappa,T)$ is a $\kappa$-complete $L$-algebra.
\end{prop}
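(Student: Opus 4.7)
The plan is to verify, in order: (i) that the Boolean and modal operations descend from $\Formulas_\kappa(G)$ to the quotient $\LT_L(G,\kappa,T)$; (ii) that the resulting structure is a Boolean algebra with a finite-join-preserving modal operator; (iii) that it belongs to the variety $L$; and (iv) that infima and suprema of sets of cardinality $<\kappa$ exist and agree with the syntactic $\bigwedge$ and $\bigvee$.

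For well-definedness I would prove the congruence property: if $\varphi_i \sim_{\mathcal{T}} \psi_i$ for each $i$ in some index set of cardinality $<\kappa$, then $\neg \varphi_1 \sim_{\mathcal{T}} \neg \psi_1$, $\nec \varphi_1 \sim_{\mathcal{T}} \nec \psi_1$, $\bigwedge_i \varphi_i \sim_{\mathcal{T}} \bigwedge_i \psi_i$ and $\bigvee_i \varphi_i \sim_{\mathcal{T}} \bigvee_i \psi_i$. Each of these is a routine derivation combining the appropriate left/right introduction rule with Cut and Weakening; for $\nec$ it uses the Nec rule applied to the hypothesis $\varphi \Rightarrow \psi$.

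Next I would check that the standard axioms of a Boolean algebra and of a modal algebra are validated by the calculus, i.e.\ that $[\varphi \wedge \psi] = [\varphi] \wedge [\psi]$ in the lattice-theoretic sense, that $\neg$ gives complements, distributivity holds, and that $[\nec \top] = [\top]$ and $[\nec(\varphi \wedge \psi)] = [\nec \varphi] \wedge [\nec \psi]$. All of these amount to producing short concrete derivations in the calculus: the Boolean identities follow from L$\neg$, R$\neg$, L$\bigwedge$, R$\bigwedge$, L$\bigvee$, R$\bigvee$, Ax, W, Cut; the modal identities follow from Nec together with propositional manipulations (e.g.\ to show $\nec(\varphi\wedge\psi)\Rightarrow \nec\varphi$, apply Nec to $\varphi,\psi\Rightarrow \varphi$). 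To see that the algebra lies in $L$, note that for any substitution instance $\chi \in L_{(G,\kappa)}$ the $T,L$-rule (with $S=\{\chi\}$) applied to an axiom $\chi\Rightarrow\chi$ (through R$\neg$/L$\neg$ to produce $\Rightarrow \chi$) gives $\mathcal{T}\vdash \chi$, so $[\chi]=\top$; hence every $L$-axiom schema is valid in the quotient.

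For $\kappa$-completeness, fix $S\subseteq \Formulas_\kappa(G)$ with $|S|<\kappa$ and show that $[\bigwedge S]$ is the greatest lower bound of $\{[\varphi] : \varphi \in S\}$. The inequality $[\bigwedge S]\leq [\varphi]$ for every $\varphi\in S$ comes from L$\bigwedge$ applied (with $\mathscr{S}=\{S\}$) to the axiom $\varphi\Rightarrow\varphi$ together with Weakening, giving $\bigwedge S \Rightarrow \varphi$. Conversely, if $[\psi]\leq [\varphi]$ for all $\varphi\in S$, i.e.\ $\mathcal{T}\vdash \psi \Rightarrow \varphi$, then a single application of R$\bigwedge$ (with $\mathscr{S}=\{S\}$ and the unique choice function picking each $\varphi\in S$ as a premise) yields $\mathcal{T}\vdash \psi \Rightarrow \bigwedge S$, so $[\psi]\leq [\bigwedge S]$. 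The dual argument with L$\bigvee$ and R$\bigvee$ handles suprema.

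The step I expect to require the most care is the well-definedness / congruence step for the infinitary connectives, since the parameter $\mathscr{S}$ in L$\bigwedge$ and R$\bigwedge$ ranges over families of sets and the derivation must juggle arbitrary choice functions; everything else is either a direct instance of a rule or a short Cut-based argument. Nothing in the argument uses more than ordinary $\kappa$-completeness of lattices — the infinitary distributive laws \eqref{eq:id} and \eqref{eq:lf} will only be needed later when upgrading $\LT_L(G,\kappa,T)$ to a profinite algebra.
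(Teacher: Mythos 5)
Your proposal is correct and follows essentially the same route as the paper: the paper likewise establishes $\kappa$-completeness by showing $\bigwedge X = [\bigwedge S]_{\mathcal T}$ via the equivalence $\mathcal{T}\vdash \varphi\Rightarrow\bigwedge S$ iff $\mathcal{T}\vdash\varphi\Rightarrow\psi$ for each $\psi\in S$ (your L$\bigwedge$/R$\bigwedge$ argument), and dispatches the modal-algebra and $L$-axiom claims by appeal to the Nec and T,L rules. You simply spell out the well-definedness and Boolean-axiom checks that the paper leaves implicit (and note that, once $[\bigwedge S]_{\mathcal T}$ is characterised as a greatest lower bound, the congruence property for the infinitary connectives is automatic, so that step needs less care than you anticipate).
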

\begin{proof}
%The modal algebra operations are defined on representatives of equivalence classes.  
%
%as follows: we set, for $\varphi, \psi \in \Formulas_\kappa(G)$,
%\begin{enumerate}
%    \item $0 \coloneqq [\bigvee \emptyset]_\mathcal{T}$, $1 \coloneqq [\bigwedge \emptyset]_\mathcal{T}$;
%    \item $[\varphi]_\mathcal{T} \star [\psi]_\mathcal{T} \coloneqq [\varphi \star \psi]_\mathcal{T}$, for $\star \in \{\wedge,\vee\}$;
%    \item $\heart [\varphi]_\mathcal{T} \coloneqq [\heart \varphi]_\mathcal{T}$, for $\heart \in \{\neg,\nec\}$.
%\end{enumerate}
%It is straightforward to verify that these operations are well defined and that they make $\LT(G,\kappa,T)$ into a modal algebra.
%

%As a consequence, 
Take $X \subseteq \LT_L(G,\kappa,T)$ such that $|X|<k$, say $X = \{[\psi]_\mathcal{T}\ \vert\ \psi \in S\}$ for some $S \subseteq \Formulas_\kappa(G)$ such that $|S|<k$; then 
we have $\bigwedge X = [\bigwedge S]_\mathcal{T}$, since
%$$\mathcal{T} \vdash \bigwedge S \Rightarrow \psi \text{ for each } \psi \in S$$
%and, 
for any formula $\varphi$,
$$\mathcal{T} \vdash \varphi \Rightarrow \bigwedge S ~~~~~\iff~~~~~ \mathcal{T} \vdash \varphi \Rightarrow \psi \text{ for each } \psi \in S$$
With a similar argument, we obtain that $\bigvee X = [\bigvee S]_\mathcal{T}$.

The fact that $\LT_L(G,\kappa,T)$ is a modal algebra and that it satisfies the axiom schemata in $L$ and the formulae in $T$ follows from the rules Nec and T,L.
%L_{(G,\kappa)}$.
\end{proof}

We want to prove that, for $\kappa$ big enough, $\LT_L(G,\kappa,T)$ is a profinite $L$-algebra; for this purpose,
we fix $\mathcal{T} = (G,\kappa,L,T)$  till the end of this section and we denote simply by $\sim$ the equivalence relation 
 $\sim_\mathcal{T}$.
%
%we prove the following proposition. We will denote by $\sim$ the equivalence relation $\sim_{(G,\kappa,\emptyset)}$ for any fixed $G$ and $\kappa$. Observe that, for any theory $\mathcal{T} = (G,\kappa,T)$, we have $\sim \subseteq \sim_\mathcal{T}$.
\begin{prop}\label{prop:eq}
Consider $S %matteo 17/11/2025, S_n
\subseteq \Formulas_\kappa(G)$ and $\mathscr{S} \subseteq \mathcal{P}(\Formulas_\kappa(G))$. The following equivalences hold whenever the formulas on both sides of the equivalence symbol belong to $\Formulas_\kappa(G)$:
\begin{enumerate}
    \item[{\rm (o)}] $\bigwedge \left\{ \bigwedge S \ \middle\vert\ S \in \mathscr{S}\right\}  \sim \bigwedge \left\{ \psi \ \middle\vert\ \psi \in \bigcup\mathscr{S}\right\}$
    \item[{\rm (i)}] $\neg \bigwedge S \sim \bigvee \neg S$ and $\neg \bigvee S \sim \bigwedge \neg S$;
    \item[{\rm (ii)}] $\nec \bigwedge S \sim \bigwedge \nec S$;
    \item[{\rm (iii)}] $\bigwedge \left\{\bigvee S\ \middle\vert\ S \in \mathscr{S} \right\} \sim \bigvee \left\{\bigwedge \{c(S)\ \vert\ S \in \mathscr{S}\}\ \middle\vert\ c \in \Ch(\mathscr{S})\right\}$;
    %matteo 17/11/2025 \item[\rm (iv)] $\bigwedge \left\{\nec^n \bigvee S_n\ \vert\ n \in \omega\right\} \sim \bigvee \left\{ \bigwedge \left\{ \nec^n \bigvee (I \cap S_n)\ \vert\ n \in \omega\right\}\ \middle\vert\ I \in \mathcal{P}_\omega\left(\bigcup S_n\right) \right\}$;
    \item[{\rm (iv)}] $\nec \bigvee S \sim \bigvee \left\{\nec \bigvee I\ \middle\vert\ I \in \mathcal{P}_\omega(S)\right\}$.
\end{enumerate}
\end{prop}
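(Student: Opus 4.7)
The plan is to prove each of the six equivalences by deriving the two sequents $\varphi \Rightarrow \psi$ and $\psi \Rightarrow \varphi$ in $\mathcal{T}$; for each equivalence the main work is a single application of one of the non-structural rules of the calculus, preceded and followed by bookkeeping with L$\bigwedge$, R$\bigwedge$, L$\bigvee$, R$\bigvee$.

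Equivalences (o), (i), (ii) are routine. For (o) the rules L$\bigwedge$ and R$\bigwedge$ flatten and rebuild the nested conjunction by a single application on each side, using the family $\mathscr{S}$ for the outer conjunction and singleton families for the inner $\bigwedge S$. For (i), the De Morgan laws, one pushes negations across with L$\neg$ and R$\neg$ (which admit arbitrary sets of formulas) and then matches the two sides via the rules for $\bigwedge$ and $\bigvee$. For (ii), the commutation of $\nec$ with $\bigwedge$, from $\bigwedge S \Rightarrow \psi$ (by L$\bigwedge$ and an axiom) one gets $\nec \bigwedge S \Rightarrow \nec \psi$ via Nec, and R$\bigwedge$ concludes; the converse direction starts from $S \Rightarrow \bigwedge S$ (by R$\bigwedge$ from axioms), applies Nec, and closes with L$\bigwedge$.

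Equivalences (iii) and (iv) correspond directly to the key nontrivial rules of the calculus, namely R$\bigwedge$/L$\bigvee$ and lf. For (iii), the $\leq$ direction is obtained by applying L$\bigwedge$ to flatten the outer conjunction into the set $\{\bigvee S : S \in \mathscr{S}\}$, then L$\bigvee$ to branch over all $c \in \Ch(\mathscr{S})$; each branch is closed by an R$\bigwedge$ producing $\bigwedge\{c(S) : S \in \mathscr{S}\}$ and then an R$\bigvee$ recognizing this as one summand of the right-hand disjunction. The $\geq$ direction is the usual distributive-lattice argument: $c(S) \in S$ gives $\bigwedge\{c(S) : S \in \mathscr{S}\} \Rightarrow \bigvee S$, so R$\bigwedge$ on the right and L$\bigvee$ on the left close the proof. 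For (iv), the $\leq$ direction is a single application of the lf rule with $\Gamma$ empty: each premise indexed by $I \in \mathcal{P}_\omega(\bigcup S_n)$ is itself one summand of the right-hand disjunction and is discharged by R$\bigvee$; the $\geq$ direction is routine, using $\bigvee (I \cap S_n) \Rightarrow \bigvee S_n$, Nec, R$\bigwedge$ on the right, and L$\bigvee$ on the left.

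Equivalence (v) can be extracted as a special case of (iv) by choosing the family so that only the $n=1$ level is nontrivial (e.g.\ $S_1 = S$ and $S_n = \{\top\}$ for $n \neq 1$), or directly from the lf rule by the same specialization; the converse direction is immediate from $\bigvee I \Rightarrow \bigvee S$ for $I \in \mathcal{P}_\omega(S)$, followed by Nec and L$\bigvee$. The main obstacle throughout is the choice-function bookkeeping required by R$\bigwedge$ and L$\bigvee$ in (iii), and by the lf rule in (iv); one must also verify that all sets of formulas produced by these rule applications remain of cardinality less than $\kappa$, which is guaranteed by the standing hypothesis that the formulas on both sides of each equivalence lie in $\Formulas_\kappa(G)$.
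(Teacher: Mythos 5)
Your proof is correct and follows essentially the same route as the paper's: (o)--(ii) dismissed as routine, (iii) via L$\bigwedge$/L$\bigvee$/R$\bigwedge$/R$\bigvee$ with choice-function branching in both directions, (iv) via a single application of the lf rule whose premises are closed by weakening and R$\bigwedge$, and (v) obtained from (iv) by the same specialization $S_1=S$, $S_n=\{\top\}$ for $n\neq 1$. The only cosmetic difference is the order in which R$\bigvee$ is interleaved with the branching rules, which is immaterial.
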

\begin{proof} For brevity, below we indicate directly with Ax the application of an instance of Ax followed by a weakening rule. Moreover, $S$, $c$ and $I$ will denote respectively a generic element of $\mathscr{S}$, $\Ch(\mathscr{S})$ and
%matteo 17/11/2025 $\mathcal{P}_\omega(\bigcup S_n)$;
$\mathcal{P}_\omega(S)$;
$R$, $d$ and $J$ will denote a particular choice of them. (o), (i) and (ii) are straightforward.

For (iii), the left-to-right direction follows from the application of L$\bigwedge$, R$\bigvee$ and L$\bigvee$, resulting in branches indexed by $d \in \Ch(\mathscr{S})$. Then, for each branch
$$\{d(S)\ \vert\ S \in \mathscr{S}\} \Rightarrow \left\{\bigwedge \{c(S)\ \vert\ S \in \mathscr{S}\}\ \middle\vert\ c \in \Ch(\mathscr{S})\right\}$$
we can apply the Structural Rule W and choose, from the set on the right hand side of the sequent, the formula $\bigwedge \{d(S)\ \vert\ S \in \mathscr{S}\}$. The proof is then obtained by applying R$\bigwedge$. The other direction follows from
\begin{align*}
\prftree[r]{\scriptsize
\text{$\begin{matrix}
    \text{L}\!\bigvee \\
    (d \in \Ch(\mathscr{S}))
\end{matrix}$}}
{\cdots}
{\prftree[r]{\scriptsize
\text{$\begin{matrix}
    \text{R}\!\bigwedge \\
    (R \in \mathscr{S})
\end{matrix}$}}
{\cdots}
{\prftree[r]{\scriptsize L\text{$\bigwedge$},R\text{$\bigvee$}}
{\prftree[r]{\scriptsize Ax}
{}
{\{d(S)\ \vert\ S \in \mathscr{S}\} \Rightarrow R}}
{\bigwedge \{d(S)\ \vert\ S \in \mathscr{S}\} \Rightarrow \bigvee R}}
{\prftree[noline]{}
{\cdots}}
{\bigwedge \{d(S)\ \vert\ S \in \mathscr{S}\} \Rightarrow \bigwedge \left\{\bigvee S\ \middle\vert\ S \in \mathscr{S} \right\}}}
{\prftree[noline]{}
{\cdots}}
{\bigvee \left\{\bigwedge \{c(S)\ \vert\ S \in \mathscr{S}\}\ \middle\vert\ c \in \Ch(\mathscr{S})\right\} \Rightarrow \bigwedge \left\{\bigvee S\ \middle\vert\ S \in \mathscr{S} \right\}}
\end{align*}
since $d(R) \in R$ for each $d \in \Ch(\mathscr{S})$ and $R \in \mathscr{S}$.

%matteo 17/11/2025 (v) follows from (iv), by setting
The right-to-left implication of (iv) follows from an application of L$\bigvee$, because, for each $J \in \mathcal{P}_\omega\left(S\right)$, the sequent
$$\nec \bigvee J \Rightarrow \nec \bigvee S$$
is provable (since $J \subseteq S$). For the other direction, set
\[ S_n \coloneqq
  \begin{cases}
    S       & \quad \text{if } n = 1\\
    S \cup \left\{\neg \bigvee S\right\}  & \quad \text{if } n \neq 1
  \end{cases}
\]
%matteo 17/11/2025
It is straightforward to verify that, for $n \geq 2$, $\bigvee S_n \sim \top$. As a consequence, $\nec \bigvee S \sim \bigwedge_{n \in \omega} \left\{ \nec^n \bigvee S_n \right\} \wedge \nec \bigvee S$. We can then equivalently prove
$$\bigwedge_{n \in \omega} \left\{ \nec^n \bigvee S_n \right\} \wedge \nec \bigvee S \Rightarrow \bigvee \left\{\nec \bigvee I\ \middle\vert\ I \in \mathcal{P}_\omega(S)\right\}.$$
%
%matteo 17/11/2025 For (iv), the left-to-right direction
Now the result
follows from the application of L$\bigwedge$, R$\bigvee$ and lf, resulting in branches indexed by $J \in \mathcal{P}_\omega\left(\bigcup S_n\right) %matteo 17/11/2025
= \mathcal{P}_\omega\left(S \cup \left\{\neg \bigvee S\right\}\right)
$. For each branch
%matteo 17/11/2025 $$\left\{ \nec^n \bigvee (J \cap S_n)\ \middle\vert\ n \in \omega\right\} \Rightarrow \left\{\bigwedge \left\{ \nec^n \bigvee (I \cap S_n)\ \middle\vert\ n \in \omega\right\}\ \middle\vert\ I \in \mathcal{P}_\omega\left(\bigcup S_n\right)\right\}$$
$$\left\{ \nec^n \bigvee J\ \middle\vert\ n \in \omega\right\}, \nec \bigvee S \Rightarrow \left\{\nec \bigvee I\ \middle\vert\ I \in \mathcal{P}_\omega\left(S\right)\right\}$$
we can apply the Structural Rule W and choose,
%matteo 17/11/2025
from the set on the left hand side of the sequent, the formulas $\nec \bigvee J, \nec \bigvee S$ and,
from the set on the right hand side of the sequent, the formula
%matteo 17/11/2025 $\bigwedge \left\{ \nec^n \bigvee (J \cap S_n)\ \middle\vert\ n \in \omega\right\}$.
$\nec \bigvee (J \cap S)$:
%%silvio Nov25
$$
\nec \bigvee J, \nec \bigvee S \Rightarrow \nec \bigvee (J \cap S)~.
$$
%
%matteo 17/11/2025 The proof is then obtained by applying R$\bigwedge$.
Applying the Rules Nec, R$\bigvee$ and L$\bigvee$ to $\bigvee J$, we obtain branches 
$\varphi, \bigvee S \Rightarrow J \cap S$, 
indexed by $\varphi \in J$. If $\varphi \in S$, then $\varphi, \bigvee S \Rightarrow J \cap S$ is an axiom. Otherwise, $\varphi$ must be the formula $\neg \bigvee S$, in which case an application of L$\neg$ gives the desired proof.
\end{proof}

We can now prove that each formula has a normal form and that the set of normal forms is \say{small}.
\begin{defn}\label{def:normal}
The set of \emph{normal} formulas $\Norm(G)$ over the variables $G$ consists of the formulas of the form
$$\bigwedge \left\{\bigvee S\ \middle\vert\ S \in \mathscr{S} \right\}$$
for some family $\mathscr{S} \subseteq \mathcal{P}(\Formulas_\omega(G))$ of sets of finitary formulas.
\end{defn}
Observe that the cardinality of conjunctions and disjunctions involved in~\cref{def:normal} above can be at most $2^{|\Formulas_\omega(G)|}$, hence $\Norm(G)$ is a subset of $\Formulas_\kappa(G)$, for any $\kappa > 2^{|\Formulas_\omega(G)|}$.

\begin{thm}[Normal form]\label{thm:normal}
If $\kappa > 2^{|\Formulas_\omega(G)|}$, then each $\varphi \in \Formulas_\kappa(G)$ is equivalent to some normal formula, i.e. there is $\mathscr{S} \subseteq \mathcal{P}(\Formulas_\omega(G))$ such that
$$\varphi \sim \bigwedge \left\{\bigvee S\ \middle\vert\ S \in \mathscr{S} \right\}.$$
\end{thm}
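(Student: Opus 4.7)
The plan is to argue by transfinite induction on the least ordinal $\alpha$ such that $\varphi \in \Formulas_{\kappa,\alpha}(G)$. The base case $\alpha=0$ is immediate: a propositional variable $g \in G$ is equivalent to $\bigwedge\{\bigvee\{g\}\}$ via the family $\mathscr{S} = \{\{g\}\}$. The limit case follows from $\Formulas_{\kappa,\beta}(G) = \bigcup_{\alpha<\beta}\Formulas_{\kappa,\alpha}(G)$, so all the work is in the successor step, where I treat the four connectives separately.

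For $\varphi = \bigwedge\mathscr{T}$, each $\chi \in \mathscr{T}$ has a normal form $\bigwedge\{\bigvee S \mid S \in \mathscr{S}_\chi\}$ by the inductive hypothesis, and \cref{prop:eq}(o) gives $\varphi \sim \bigwedge\{\bigvee S \mid S \in \bigcup_{\chi}\mathscr{S}_\chi\}$, which is already normal. For $\varphi = \nec\psi$ with $\psi \sim \bigwedge\{\bigvee S \mid S \in \mathscr{S}\}$, items (ii) and (v) of \cref{prop:eq} successively yield
\[ \varphi \sim \bigwedge\left\{\nec\bigvee S \,\middle\vert\, S \in \mathscr{S}\right\} \sim \bigwedge\left\{\bigvee\left\{\nec\bigvee I \,\middle\vert\, I \in \mathcal{P}_\omega(S)\right\} \,\middle\vert\, S \in \mathscr{S}\right\}, \]
and since each $\nec\bigvee I$ with $I$ finite already lies in $\Formulas_\omega(G)$, the result is normal.

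The delicate cases are $\neg$ and $\bigvee$; both rely on the dual of \cref{prop:eq}(iii), namely
\[ \bigvee\left\{\bigwedge R \,\middle\vert\, R \in \mathscr{R}\right\} \sim \bigwedge\left\{\bigvee\{c(R) \mid R \in \mathscr{R}\} \,\middle\vert\, c \in \Ch(\mathscr{R})\right\}, \]
which I would first derive from (i) and (iii) by a short double-negation calculation. Then for $\varphi = \neg\psi$ with $\psi \sim \bigwedge\{\bigvee S \mid S \in \mathscr{S}\}$, two applications of (i) give $\varphi \sim \bigvee\{\bigwedge\neg S \mid S \in \mathscr{S}\}$, and the dual distributivity converts this into $\bigwedge\{\bigvee\{\neg c(S) \mid S \in \mathscr{S}\} \mid c \in \Ch(\mathscr{S})\}$. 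For $\varphi = \bigvee\mathscr{T}$, applying the same dual distributivity to the family of normalised disjuncts and using associativity of $\bigvee$ yields $\varphi \sim \bigwedge\{\bigvee\bigcup_\chi f(\chi) \mid f \in \prod_{\chi \in \mathscr{T}}\mathscr{S}_\chi\}$, whose inner sets are subsets of $\Formulas_\omega(G)$.

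The main obstacle is cardinality bookkeeping: one must check that every intermediate formula really lies in $\Formulas_\kappa(G)$. After identifying equivalent inner disjunctions by their underlying sets of finitary formulas, in every case the outer index set is a subset of $\mathcal{P}(\Formulas_\omega(G))$, which has cardinality $2^{|\Formulas_\omega(G)|} < \kappa$, while each inner disjunction is over a subset of $\Formulas_\omega(G)$ of size at most $|\Formulas_\omega(G)| < \kappa$. This is precisely where the hypothesis $\kappa > 2^{|\Formulas_\omega(G)|}$ enters, and it guarantees that the construction never escapes $\Formulas_\kappa(G)$, even though the a priori cardinalities of the choice-function sets and Cartesian products involved in the distributivity steps would be much larger.
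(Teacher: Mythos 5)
Your induction, base case, limit case, and the $\bigwedge$-, $\nec$- and $\neg$-cases match the paper's proof essentially step for step (the paper applies \cref{prop:eq}(iii) before pushing $\neg$ inward rather than after, but that is the same computation). The one place you genuinely diverge is the $\bigvee$-case: the paper reduces $\bigvee R \sim \neg \bigwedge \neg R$ and reuses the $\neg$- and $\bigwedge$-cases, whereas you distribute directly via the dual of \cref{prop:eq}(iii) and then flatten the nested disjunctions. This shortcut has a real cardinality wrinkle that your bookkeeping paragraph glosses over. Before flattening, the outer conjunction is indexed by the distinct formulas $\bigvee\{c(R)\mid R\in\mathscr{R}\}$, whose disjuncts are themselves formulas of the form $\bigvee S$ with $S\subseteq \Formulas_\omega(G)$; these inner sets are therefore subsets of a set of size $2^{|\Formulas_\omega(G)|}$, not of $\Formulas_\omega(G)$ itself, so the outer conjunction can a priori have up to $2^{2^{|\Formulas_\omega(G)|}}$ distinct conjuncts. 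For the smallest admissible $\kappa$, e.g.\ $\kappa=(2^{|\Formulas_\omega(G)|})^{+}$, this intermediate formula need not lie in $\Formulas_\kappa(G)$ at all, and \cref{prop:eq} is explicitly stated only for formulas that do; your claim that ``in every case the outer index set is a subset of $\mathcal{P}(\Formulas_\omega(G))$'' is only true \emph{after} flattening, which you cannot perform on a formula that does not exist. The fix is either to prove the distributed-and-flattened equivalence in one combined derivation (both endpoints are legal), or simply to route $\bigvee$ through $\neg\bigwedge\neg$ as the paper does, where every intermediate conjunction and disjunction is indexed by subsets of $\Formulas_\omega(G)$ and the bound $2^{|\Formulas_\omega(G)|}<\kappa$ suffices throughout. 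Everything else, including deriving the dual distributive law from (i) and (iii) by double negation, is fine.
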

\begin{proof}
By transfinite induction on the construction of $\varphi$. For $\varphi \in \Formulas_{\kappa,0}(G) = G \subseteq \Formulas_\omega(G)$ the claim holds with $\mathscr{S} \coloneqq \{\{\varphi\}\}$.

If $\varphi \in \Formulas_{\kappa,\alpha+1}(G)$, then we have the following cases.
\begin{enumerate}
    \item $\varphi \in \Formulas_{\kappa,\alpha}(G)$ satisfies the claim, by induction hypothesis.
    \item $\varphi = \neg \psi$ for some $\psi \in \Formulas_{\kappa,\alpha}(G)$. We have that $\psi \sim \bigwedge \left\{\bigvee S\ \middle\vert\ S \in \mathscr{S} \right\}$ for some $\mathscr{S} \subseteq \mathcal{P}(\Formulas_\omega(G))$, by induction hypothesis. As a consequence,
    \begin{align*}
        \varphi &\sim \neg \bigwedge \left\{\bigvee S\ \middle\vert\ S \in \mathscr{S} \right\} \sim \\
        &\sim \neg \bigvee \left\{\bigwedge \{c(S)\ \vert\ S \in \mathscr{S}\}\ \middle\vert\ c \in \Ch(\mathscr{S})\right\} \sim \\
        &\sim \bigwedge \left\{\neg \bigwedge \{c(S)\ \vert\ S \in \mathscr{S}\}\ \middle\vert\ c \in \Ch(\mathscr{S})\right\} \sim \\
        &\sim \bigwedge \left\{\bigvee \{\neg c(S)\ \vert\ S \in \mathscr{S}\}\ \middle\vert\ c \in \Ch(\mathscr{S})\right\}
    \end{align*}
    by \cref{prop:eq} (i) and (iii). The claim holds for $\varphi$, since $c(S) \in S \subseteq \Formulas_\omega(G)$, hence $\neg c(S) \in \Formulas_\omega(G)$ for each $c \in \Ch(\mathscr{S})$.
    \item $\varphi = \nec \psi$ for some $\psi \in \Formulas_{\kappa,\alpha}(G)$. Similarly to the previous case,
    \begin{align*}
        \varphi &\sim \nec \bigwedge \left\{\bigvee S\ \middle\vert\ S \in \mathscr{S} \right\} \sim \bigwedge \left\{\nec \bigvee S\ \middle\vert\ S \in \mathscr{S} \right\} \sim \\
        &\sim \bigwedge \left\{\bigvee \left\{\nec \bigvee I\ \middle\vert\ I \in \mathcal{P}_\omega(S)\right\} \middle\vert\ S \in \mathscr{S} \right\}
    \end{align*}
    by \cref{prop:eq} (ii) and (v). The claim holds for $\varphi$, since $S \subseteq \Formulas_\omega(G)$, hence $\nec \bigvee I \in \Formulas_\omega(G)$ for each $S \in \mathscr{S}$ and $I \in \mathcal{P}_\omega(S)$.
    \item $\varphi = \bigwedge R$ for some $R \subseteq \Formulas_{\kappa,\alpha}(G)$ such that $|R|<\kappa$. By induction hypothesis, for each $\psi \in R$, $\psi \sim \bigwedge \left\{\bigvee S\ \middle\vert\ S \in \mathscr{S}_\psi \right\}$ for some $\mathscr{S}_\psi \subseteq \mathcal{P}(\Formulas_\omega(G))$. 
    Then
    $$
    \phi \sim  \bigwedge\left\{
      \bigwedge \left\{ 
          \bigvee S \ \vert \ S\in \mathscr{S}_\psi 
          \right\} 
        \ \middle \vert \ \psi\in R
    \right\}
    $$
    and by \cref{prop:eq} (o)
    $$
    \phi \sim  \bigwedge\left\{ \bigvee S
        \ \middle \vert \ S\in \bigcup \left\{ \mathscr{S}_\psi \ \vert \  \psi\in R \right\}
    \right\}
    $$
    yieldying the claim because $\bigcup \left\{ \mathscr{S}_\psi \ \vert \  \psi\in R \right\}\subseteq \mathcal{P}(\Formulas_\omega(G))$.   %
    \item $\varphi = \bigvee R$ for some $R \subseteq \Formulas_{\kappa,\alpha}(G)$ such that $|R|<\kappa$. Then $\varphi \sim \neg \bigwedge \neg R$, hence the claim follows from the application of the previous points.
\end{enumerate}

If $\varphi \in \Formulas_{\kappa,\beta}(G)$ and $\beta$ is a limit ordinal, then $\varphi \in \Formulas_{\kappa,\alpha}(G)$ for some $\alpha < \beta$, hence the claim follows from induction hypothesis.
\end{proof}

\begin{thm}\label{thm:pro}
If 
%$\kappa > 2^{2^{2^{|\Formulas_\omega(G)|}}}$,
$\kappa_0 \geq 2^{2^{|\Formulas_\omega(G)|}}$ and $\kappa> 2^{\kappa_0}$, 
then $\LT_L(G,\kappa,T)$ is a profinite $L$-algebra.
\end{thm}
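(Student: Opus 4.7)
The plan is to verify condition (iii) of \cref{lem:lf} for $\LT_L(G,\kappa,T)$: completeness, atomicity, complete additivity, and \eqref{eq:lf}. By \cref{lem:caba}, atomicity together with completeness is equivalent to completeness plus the infinitary distributive law \eqref{eq:id}, so the goal reduces to verifying completeness and the three infinitary laws \eqref{eq:id}, \eqref{eq:ca}, \eqref{eq:lf} in $\LT_L(G,\kappa,T)$. Once these are in place, the algebra is profinite as a modal algebra; since it is already an $L$-algebra satisfying $T$ by \cref{prop:kcompl}, it will be a profinite $L$-algebra.

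The crucial input is a cardinality bound coming from normal forms. By \cref{thm:normal} (applicable since $\kappa \geq \kappa_0 > 2^{|\Formulas_\omega(G)|}$), every equivalence class in $\LT_L(G,\kappa,T)$ is represented by some element of $\Norm(G)$, and
$$|\Norm(G)| \;\leq\; |\mathcal{P}(\mathcal{P}(\Formulas_\omega(G)))| \;=\; 2^{2^{|\Formulas_\omega(G)|}} \;\leq\; \kappa_0.$$
So $|\LT_L(G,\kappa,T)| \leq \kappa_0 < \kappa$, and in particular every subset $X$ of $\LT_L(G,\kappa,T)$ has size at most $\kappa_0 < \kappa$ and admits a representing set $S \subseteq \Formulas_\kappa(G)$ with $|S| < \kappa$. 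This immediately upgrades \cref{prop:kcompl} from $\kappa$-completeness to full completeness, since $\bigwedge X$ and $\bigvee X$ are represented by $\bigwedge S$ and $\bigvee S$.

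Each of \eqref{eq:id}, \eqref{eq:ca}, \eqref{eq:lf} then reduces to the corresponding clause of \cref{prop:eq} applied to a system of representatives: \eqref{eq:lf} follows from \cref{prop:eq}(iv), whose right-hand side lies in $\Formulas_\kappa(G)$ because $|\mathcal{P}_\omega(\bigcup S_n)| \leq \kappa_0 < \kappa$; \eqref{eq:ca} follows by combining \cref{prop:eq}(ii) with De Morgan (\cref{prop:eq}(i)) to pass from $\nec \bigwedge$ to $\pos \bigvee$; and \eqref{eq:id} uses \cref{prop:eq}(iii). The hard part will be this last case: a family $\mathscr{S}$ of at most $\kappa_0$ subsets, each of size at most $\kappa_0$, has $|\Ch(\mathscr{S})| \leq \kappa_0^{\kappa_0} = 2^{\kappa_0}$, so the right-hand side of \cref{prop:eq}(iii) is a formula of $\Formulas_\kappa(G)$ only because of the hypothesis $\kappa > 2^{\kappa_0}$. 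This is precisely why the double-exponential threshold on $\kappa$ is required, while all other cardinality checks are one exponent milder.
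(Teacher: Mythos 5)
Your proof is correct and follows essentially the same route as the paper: use \cref{thm:normal} to bound $|\LT_L(G,\kappa,T)|$ by $|\Norm(G)|\leq 2^{2^{|\Formulas_\omega(G)|}}\leq\kappa_0<\kappa$, thereby upgrading the $\kappa$-completeness of \cref{prop:kcompl} to full completeness, and then verify \eqref{eq:id}, \eqref{eq:ca} and \eqref{eq:lf} by applying the corresponding clauses of \cref{prop:eq} to sets of (normal-form) representatives. One minor bookkeeping slip: a family $\mathscr{X}$ of subsets of the algebra may have up to $2^{\kappa_0}$ members rather than $\kappa_0$, but since connectives apply to \emph{sets} of formulas, the disjunction on the right of \cref{prop:eq}(iii) ranges over at most $2^{|\bigcup\mathscr{S}|}\leq 2^{\kappa_0}<\kappa$ distinct conjuncts, so your identification of where the hypothesis $\kappa>2^{\kappa_0}$ is needed remains sound.
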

\begin{proof}
The fact that $\LT_L(G,\kappa,T)$ is a \emph{complete} modal algebra follows from \cref{prop:kcompl} observing that
$$|\LT_L(G,\kappa,T)| \leq |\Norm(G)| \leq 2^{2^{|\Formulas_\omega(G)|}} \leq \kappa_0 < \kappa$$
(by \cref{thm:normal}). The claim is established by showing that 
all the identities of \cref{lem:caba,lem:lf} 
follow from~\cref{prop:eq} and from Rule lf (for the case~\eqref{eq:lf}). 
In fact, take for instance the case of the identity~\eqref{eq:id}: the two members of such identity concern families of subsets of $\LT_L(G,\kappa,T)$ and, since the elements of  $\LT_L(G,\kappa,T)$ can be represented by  normal forms, both members of~\eqref{eq:id} can be written as equivalence classes of formulas in $\Formulas_\kappa(G)$, so that~\cref{prop:eq} (iii) applies.          
A similar argument proves the identities~\eqref{eq:ca} and~\eqref{eq:lf}.
\end{proof}

We can now prove completeness for our algebraic semantics. Let us call a regular cardinal $\kappa$ \emph{sufficiently large for a set $G$} (or just \emph{sufficiently large} when reference to $G$ is understood) iff it satisfies the hypotheses of~\cref{thm:pro};\footnote{ 
%sg aggiunta nota per evitare oscurità di presentazione
This happens when $\kappa > 2^{2^{2^{|\Formulas_\omega(G)|}}}$.
}
thus~\cref{thm:pro} can be restated just by saying that \emph{$\LT_L(G,\kappa,T)$ is a profinite $L$-algebra for sufficiently large $\kappa$}.

Given $\mathcal{T} = (G,\kappa,L,T)$, 
%(where $\kappa$ is sufficiently large), 
there is a canonical valuation 
$$\eta_\mathcal{T} \colon G \longrightarrow \mathcal{U}(\LT_L(G,\kappa,T))$$ 
sending $x \in G$ to $[x]_\mathcal{T} \in \LT_L(G,\kappa,T)$; observe that $\eta_\mathcal{T}(\varphi) = [\varphi]_\mathcal{T}$ for each $\varphi \in \Formulas_\kappa(G)$. In particular, if $\varphi \in T$ or if $\varphi$ is a substitution instance of a formula in $L$, then
$\eta_\mathcal{T}(\varphi) = [\varphi]_\mathcal{T} = %\left[\bigwedge \emptyset\right]_\mathcal{T} = 
\top.$

\begin{thm}[Completeness]\label{thm:completeness}
For any $\mathcal{T} = (G,\kappa,L,T)$ 
(with sufficiently large $\kappa$) and any sequent $\Gamma \Rightarrow \Delta \in \Seq_\kappa(G)$,
$$\text{if } \mathcal{T} \vDash \Gamma \Rightarrow \Delta, ~~~~~\text{then } \mathcal{T} \vdash \Gamma \Rightarrow \Delta.$$
\end{thm}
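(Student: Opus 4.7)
The plan is the standard Lindenbaum--Tarski argument, with the canonical valuation into $M := \LT_L(G,\kappa,T)$ serving as a universal model. By~\cref{thm:pro}, $M$ is itself a profinite $L$-algebra (thanks to $\kappa$ being sufficiently large), and from the discussion preceding the theorem statement the canonical valuation $\eta_\mathcal{T}$ sends every $\varphi \in T$ to $\top$. Hence the hypothesis $\mathcal{T} \vDash \Gamma \Rightarrow \Delta$ applies to the pair $(M, \eta_\mathcal{T})$ and yields
\[
\bigwedge \{[\varphi]_\mathcal{T} \mid \varphi \in \Gamma\} \;\leq\; \bigvee \{[\psi]_\mathcal{T} \mid \psi \in \Delta\}
\]
computed in the complete lattice $M$.

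The core step is to convert this semantic inequality into a $\mathcal{T}$-proof. The subtlety, which is the main obstacle, is that $\Gamma$ or $\Delta$ may have cardinality $\geq \kappa$, so that $\bigwedge \Gamma$ and $\bigvee \Delta$ need not even be formulas of $\Formulas_\kappa(G)$. Here the cardinality bound $|M| \leq \kappa_0 < \kappa$ established inside the proof of~\cref{thm:pro} becomes essential: since the image $\eta_\mathcal{T}(\Gamma) \subseteq M$ has fewer than $\kappa$ elements, one may pick $\Gamma_0 \subseteq \Gamma$ with $|\Gamma_0|<\kappa$ and $\eta_\mathcal{T}(\Gamma_0) = \eta_\mathcal{T}(\Gamma)$, and analogously $\Delta_0 \subseteq \Delta$ with $|\Delta_0|<\kappa$ and $\eta_\mathcal{T}(\Delta_0) = \eta_\mathcal{T}(\Delta)$. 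By~\cref{prop:kcompl}, the meets and joins of these small families coincide with the classes $[\bigwedge \Gamma_0]_\mathcal{T}$ and $[\bigvee \Delta_0]_\mathcal{T}$, so the displayed inequality rewrites as $[\bigwedge \Gamma_0]_\mathcal{T} \leq [\bigvee \Delta_0]_\mathcal{T}$, which, unfolding the definition of $\sim_\mathcal{T}$, amounts exactly to $\mathcal{T} \vdash \bigwedge \Gamma_0 \Rightarrow \bigvee \Delta_0$.

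From here a short syntactic manipulation reaches $\Gamma \Rightarrow \Delta$. First, $\Gamma_0 \Rightarrow \bigwedge \Gamma_0$ is derivable by applying R$\bigwedge$ with $\mathscr{S} = \{\Gamma_0\}$ to the premises $\Gamma_0 \Rightarrow \varphi$ (each an instance of Ax plus W), one for every $\varphi \in \Gamma_0$; dually, L$\bigvee$ with $\mathscr{S} = \{\Delta_0\}$ yields $\bigvee \Delta_0 \Rightarrow \Delta_0$. Two applications of Cut with $\bigwedge \Gamma_0 \Rightarrow \bigvee \Delta_0$ then give $\Gamma_0 \Rightarrow \Delta_0$, and a final use of W closes with $\Gamma \Rightarrow \Delta$.

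Everything else is routine Lindenbaum bookkeeping; the genuinely non-trivial ingredient is the cardinality reduction above, which is the reason the theorem requires $\kappa$ \emph{sufficiently large} rather than merely regular and which depends critically on the normal form analysis leading up to~\cref{thm:pro}.
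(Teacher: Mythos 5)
Your proof is correct and follows essentially the same route as the paper: instantiate the semantic hypothesis at the canonical valuation $\eta_\mathcal{T}$ into the Lindenbaum algebra (profinite by \cref{thm:pro}) and read off derivability from the resulting order inequality. The paper simply records $\left[\bigwedge \Gamma\right]_\mathcal{T} \leq \left[\bigvee \Delta\right]_\mathcal{T}$ and stops at $\mathcal{T} \vdash \bigwedge \Gamma \Rightarrow \bigvee \Delta$; your cardinality reduction to $\Gamma_0,\Delta_0$ (needed when $|\Gamma|$ or $|\Delta|$ is at least $\kappa$) and the explicit Cut/W steps back to $\Gamma \Rightarrow \Delta$ are details the paper leaves implicit, and you handle them correctly.
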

\begin{proof}
If $\mathcal{T} \vDash \Gamma \Rightarrow \Delta$, then (by \cref{eq:sem}), this sequent evaluates to $\top$ for any valuation $v$  in a profinite $L$-algebra such that $v(\varphi)=\top$ for all $\varphi \in T$. Applying this to the canonical valuation 
 $\eta_\mathcal{T}$, we get 
$$\left[\bigwedge \Gamma \right]_\mathcal{T} = \bigwedge \eta_\mathcal{T} \left(\Gamma\right) \leq \bigvee \eta_\mathcal{T} \left(\Delta\right) = \left[\bigvee \Delta \right]_\mathcal{T}$$
which
%, as observed in the proof of \cref{prop:kcompl}, 
is equivalent to $\mathcal{T} \vdash \bigwedge \Gamma \Rightarrow \bigvee \Delta$. 
\end{proof}

The following Proposition is immediate:

\begin{prop}\label{prop:univ}
Consider $\mathcal{T} = (G,\kappa,L,T)$ and  a profinite $L$-algebra $M$. Valuations $v \colon G \longrightarrow \mathcal{U}(M)$ such that $v(\varphi) = \top$ for each $\varphi \in T$ are in one to one correspondence with morphisms of $L$-algebras $h : \LT_L(G,\kappa,T) \longrightarrow M$ preserving meets and joins of cardinality less than $\kappa$.
The correspondence is given by the relationship $v=\mathcal{U}(h) \circ \eta_\mathcal{T}$.
\end{prop}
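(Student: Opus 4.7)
The plan is to exhibit the bijection in both directions and verify that the two assignments are mutually inverse. The statement before the proposition already records that $\eta_\mathcal{T}(\varphi)=[\varphi]_\mathcal{T}=\top$ whenever $\varphi\in T$ (and similarly for $L$-axiom substitution instances), and this will make one direction essentially automatic.

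In the forward direction, given a morphism $h\colon \LT_L(G,\kappa,T)\longrightarrow M$ of $L$-algebras preserving meets and joins of cardinality $<\kappa$, I simply set $v\coloneqq \mathcal{U}(h)\circ \eta_\mathcal{T}$. Since $h$ is a modal algebra homomorphism it preserves $\top$, and $\eta_\mathcal{T}(\varphi)=\top$ for $\varphi\in T$, so $v(\varphi)=\top$ for every such $\varphi$; hence $v$ is a valuation of the required kind.

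In the backward direction, given a valuation $v_0\colon G\longrightarrow \mathcal{U}(M)$ sending each $\varphi\in T$ to $\top$ (once extended inductively to $v\colon\Formulas_\kappa(G)\longrightarrow \mathcal{U}(M)$ as in the paragraph defining valuations), I define
$$h([\varphi]_\mathcal{T})\coloneqq v(\varphi).$$
The delicate point, and the main obstacle, is well-definedness: I must show that $[\varphi]_\mathcal{T}=[\psi]_\mathcal{T}$ implies $v(\varphi)=v(\psi)$. But $[\varphi]_\mathcal{T}=[\psi]_\mathcal{T}$ means $\mathcal{T}\vdash \varphi\leftrightarrow \psi$, so Soundness (\cref{prop:sound}) gives $\mathcal{T}\vDash \varphi\leftrightarrow\psi$; applying this to our specific $v$ (which validates $T$ by hypothesis) yields $v(\varphi)=v(\psi)$. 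That $h$ is a morphism of $L$-algebras preserving $<\kappa$-meets and $<\kappa$-joins is then immediate from the inductive clauses defining $v$ together with the fact, recalled in the proof of \cref{prop:kcompl}, that the Boolean, modal, and infinitary operations of $\LT_L(G,\kappa,T)$ are computed on representatives.

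Finally, the two constructions are mutually inverse. Starting from $v$, passing to $h$, and then computing $\mathcal{U}(h)\circ \eta_\mathcal{T}$ on $x\in G$ gives $h([x]_\mathcal{T})=v(x)$, so we recover $v$ on variables and hence as a valuation. Conversely, starting from $h$ and producing $v=\mathcal{U}(h)\circ\eta_\mathcal{T}$, the morphism reconstructed from $v$ agrees with $h$ on every $[x]_\mathcal{T}$; since every element of $\LT_L(G,\kappa,T)$ is of the form $[\varphi]_\mathcal{T}$ for some $\varphi\in\Formulas_\kappa(G)$, and $[\varphi]_\mathcal{T}$ is built from the $[x]_\mathcal{T}$ using operations that both morphisms preserve, the two morphisms coincide.
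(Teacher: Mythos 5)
Your proposal is correct and follows essentially the same route the paper intends (the paper marks the proposition as immediate, and its sketch likewise defines $h([\varphi]_\mathcal{T}) \coloneqq v(\varphi)$, secures well-definedness via Soundness, and reads off the morphism properties from the inductive definition of the extended valuation). Your identification of well-definedness as the one genuinely delicate point, handled by \cref{prop:sound}, is exactly the key step.
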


\begin{comment}
\begin{proof}
Consider a valuation $v \colon G \longrightarrow \mathcal{U}(M)$ as above. The association $[\varphi]_\mathcal{T} \mapsto v(\varphi)$ is well defined ($\varphi \sim_\mathcal{T} \psi$ implies $v(\varphi) = v(\psi)$ by soundness \cref{prop:soundness}) and defines a morphism of modal algebras $\LT_L(G,\kappa,T) \longrightarrow M$ preserving meets and joins of cardinality less than $\kappa$ (by definition of the extension of a valuation) whose set-composition with $\eta_\mathcal{T}$ gives $v$. Moreover, for any such morphism $h \colon \LT_L(G,\kappa,T) \longrightarrow M$, the identity $h([\varphi]_\mathcal{T}) = v(\varphi)$ holds.
\end{proof}

A consequence of \cref{prop:univ} is the following: given two theories $\mathcal{T} = (G,\kappa,T)$ and $\mathcal{T}' = (G,\kappa',T)$ with $\kappa, \kappa' > 2^{2^{|\Formulas_\omega(G)|}}$ (so that the corresponding Lindenbaum-Tarski algebras are profinite, by \cref{thm:pro}, hence $\kappa$-complete for each $\kappa$), we get
$$\LT_L(G,\kappa,T) \longrightarrow \LT_L(G,\kappa',T) ~~~~~(\text{associated with } \eta_{\mathcal{T}'} \colon G \longrightarrow \LT_L(G,\kappa',T))$$
and
$$\LT_L(G,\kappa',T) \longrightarrow \LT_L(G,\kappa,T) ~~~~~(\text{associated with } \eta_\mathcal{T} \colon G \longrightarrow \LT_L(G,\kappa,T))$$
which are one inverse to the other.
\end{comment} 

A consequence of the above proposition is that, if 
$\kappa, \kappa'$ are both sufficiently large,
%bigger than $2^{2^{|\Formulas_\omega(G)|}}$, 
the algebras 
$\LT_L(G,\kappa',T)$ and $\LT_L(G,\kappa,T)$ are isomorphic; so from 
 now on, we  write $\mathcal{T} = (G,L,T)$ to denote the calculus $(G,\kappa, L,T)$, with $\kappa$ 
 sufficiently large;
 %big enough ($> 2^{2^{|\Formulas_\omega(G)|}}$); 
 similarly, we write 
 $\LT_L(G,T)$ and $\Formulas(G)$ to denote the Lindenbaum algebra $\LT_L(G,\kappa,T)$ and the set of formulas $\Formulas_\kappa(G)$.

\section{Syntactic versus Categorical Properties}\label{sec:prop}

We want to establish a connection between logical and categorical properties. As in \cite{Ghi-Zaw}, we  take into consideration r-regularity and Barr r-exactness, two categorical notions obtained from the extensively studied notions of regular and Barr-exact categories \say{by replacing monomorphisms with regular monomorphisms and regular epimorphisms with epimorphisms}.
%silvio 8/11/25 rimossa la frase che segue e riportata prima della dim del th 11
%In case all subobjects are regular, the two notions coincide. 
As we said at the end of the previous section, we will no longer mention the regular cardinal $\kappa$ used to build calculi acting as presentations of algebras (as we saw, $\kappa$ should be taken to be sufficiently large). Moreover, we will use the symbols $\vdash_L$ and $\vDash_L$, fixing the logical parameter $L$ of a calculus $(G,L,T)$ and letting the others vary, and write $(G,T) \vdash_L \Gamma \Rightarrow \Delta$ instead of $(G,L,T) \vdash \Gamma \Rightarrow \Delta$ (similarly for $\vDash_L$).

Many facts known from the customary finitary calculi extends (and sometimes also simplify) to our infinitary context. In particular, every profinite $L$-algebra $M$ can be presented as $\LT_L(G,T)$ for some $G$ and for some set $T \subseteq \Formulas(G)$; actually, $T$ can be taken to be a singleton $\tau \in \Formulas(G)$, because we have infinite conjunctions in our language. We avoid  parentheses for singletons, so that we write just $M\simeq \LT_L(G,\tau)$. We say that $(G,\tau)$ is a \emph{presentation} of $M$: the standard presentation is obtained by taking $G=\mathcal{U}(M)$ and 
$$\tau \coloneqq \bigwedge \{\varphi \in \Norm(G)\ \vert\ \id(\varphi) = \top\}$$
where $\id$  denote the canonical extension $\Formulas(G) \longrightarrow \mathcal{U}(M)$ of the identity valuation
(the fact that we get a presentation for $M$, i.e. an isomorphism $M\simeq \LT_L(G,\tau)$ can be esatablished using 
\cref{prop:univ} and \cref{thm:normal}).

\begin{comment}
; obviously, $\id(\tau) = \top$. By \cref{prop:univ}, there exists a unique morphism of profinite $L$-algebras $h \colon \LT_L(G,\tau) \longrightarrow M$, such that $h([\varphi]_{(G,\tau)}) = \id(\varphi)$ for each $\varphi \in \Formulas(G)$. Surjectivity of $h$ follows from the fact that, for each $x \in \mathcal{U}(M) = G$,
$$h\left([x]_{(G,\tau)}\right) = \id(x) = x.$$
For injectivity, consider $\varphi \in \Formulas(G)$ (wlog $\varphi \in \Norm(G)$, by \cref{thm:normal}) such that
$$\id(\varphi) = h\left([\varphi]_{(G,\tau)}\right) = \top.$$
Using the Cut rule, we get $(G,\tau) \vdash_L \varphi$ (by definition of $\tau$), hence $[\varphi]_{(G,\tau)} = \top$. As a consequence, $h$ is an isomorphism of profinite $L$-algebras.
\end{comment}

Morphisms of profinite $L$-algebras $h \colon M \longrightarrow N$ can be presented as \emph{extensions of theories}. First, present $M$ as $\LT_L(X_0,\tau_0)$, so that $h$ is isomorphic to $h_0 \colon \LT_L(X_0,\tau_0) \longrightarrow N$. Let $X_1 \coloneqq \mathcal{U}(N) \setminus \Image h_0$, with inclusion $\iota \colon X_1 \longrightarrow \mathcal{U}(N)$.
Let
$v_0 \colon X_0 \longrightarrow \mathcal{U}(N)$ be the valuation corresponding  to $h_0$ (as stated in \cref{prop:univ}); this evaluation can be extended to the 
%The universal property of the $\Set$-coproduct induces the 
valuation $v_1 \colon X_0 + X_1 \longrightarrow \mathcal{U}(N)$
taking coproduct with the inclusion $\iota$. 
Similarly as before, define
$$\tau_1 \coloneqq \bigwedge \{\varphi \in \Norm(X_0+X_1)\ \vert\ v_1(\varphi) = \top\}$$
We get an isomorphism of profinite $L$-algebras $h_1 \colon \LT_L(X_0+X_1,\tau_1) \longrightarrow N$ such that $h_1([\varphi]_{(X_0+X_1,L,\tau_1)}) = v_1(\varphi)$ for each $\varphi \in \Formulas(X_0+X_1)$. 
Making the necessary computations, it turns out that the composed morphism 
$$
\LT_L(X_0, \tau_0) \simeq M \buildrel{h}\over \longrightarrow 
N\simeq \LT_L(X_0+X_1,\tau_1)
$$
(called the standard presentation of $h$) maps $[\psi]_{(X_0,L,\tau_0)}$ to $[\psi]_{(X_0+X_1,L,\tau_1)}$, for every $\psi\in\Formulas(X_0)$. Since $\top=[\tau_0]_{(X_0,L,\tau_0)}$, this means in particular that 
$\top=[\tau_0]_{(X_0+X_1,L,\tau_1)}$, i.e. 
$$(X_0+X_1,\tau_1) \vdash_L \tau_0$$
meaning that \emph{$h$ represents the  extension $\tau_1$ of theory $\tau_0$ in the larger language $X_0+X_1\supseteq X_0$}.
\begin{comment}
Consider the valuation $v \colon X_0 \longrightarrow \mathcal{U}(\LT(X_0+X_1,L,\tau_1))$ whose extension sends $\psi \in \Formulas(X_0)$ to $[\psi]_{(X_0+X_1,L,\tau_1)}$. We have that $\Formulas(X_0) \subseteq \Formulas(X_0+X_1)$ and $v_1(\tau_0) = v_0(\tau_0) = \top$; as a consequence, $(X_0+X_1,\tau_1) \vdash_L \tau_0$ (we can always assume $\tau_0$ to be a normal formula, by \cref{thm:normal}), hence
$$v(\tau_0) = [\tau_0]_{(X_0+X_1,\tau_1)} = \top.$$
It is then induced a morphism of profinite $L$-algebras $\LT_L(X_0,\tau_0) \longrightarrow \LT_L(X_0+X_1,\tau_1)$ sending $[\psi]_{(X_0,\tau_0)}$ to $[\psi]_{(X_0+X_1,\tau_1)}$ for each $\psi \in \Formulas(X_0)$. We end up with the following commutative diagram in $\Pro L\MA_\fin$
\[\begin{tikzcd}[ampersand replacement=\&]
	{\LT_L(X_0,\tau_0)} \& N \\
	{\LT_L(X_0+X_1,\tau_1)}
	\arrow["{h_0}", from=1-1, to=1-2]
	\arrow[from=1-1, to=2-1]
	\arrow["{h_1}"', from=2-1, to=1-2]
\end{tikzcd}\]
with $h_1$ being an isomorphism. We then have that $h$ is isomorphic to the canonical morphism
$$\LT_L(X_0,\tau_0) \longrightarrow \LT_L(X_0 + X_1,\tau_1)$$
This means that $h$ is represented by an extension of theories, since
$$(X_0+X_1,\tau_1) \vdash_L \tau_0$$
\end{comment}
Observe that $h$ is
\begin{enumerate}
    \item injective if and only if the extension is \emph{conservative}, i.e.
    $$(X_0+X_1,\tau_1) \vdash_L \psi ~~~~~\implies~~~~~ (X_0, \tau_0) \vdash_L \psi$$
    for each $\psi \in \Formulas(X_0)$;%\footnote{The notion of conservative extension can be equivalently formulated using (normal) formulas instead of sequents.}
    \item surjective if and only if $X_1 = \emptyset$.
\end{enumerate}

We describe some colimits presentations  in $\Pro L\MA_\fin$. The initial object of $\Pro L\MA_\fin$ is $\LT_L(\emptyset,\top)$. 
A pushout square in $\Pro L\MA_\fin$ can be presented as
\[\begin{tikzcd}[ampersand replacement=\&]
	{\LT_L(X_0 + X_2,\tau_2)} \& {\LT_L(X_0 + X_1 + X_2,\tau_1 \wedge \tau_2)} \\
	{\LT_L(X_0,\tau_0)} \& {\LT_L(X_0 + X_1,\tau_1)}
	\arrow[from=1-1, to=1-2]
	\arrow[from=2-1, to=1-1]
	\arrow[from=2-1, to=2-2]
	\arrow[from=2-2, to=1-2]
\end{tikzcd}\]
where
$$(X_0+X_i,\tau_i) \vdash_L \tau_0$$
for $i = 1, 2$ (we get a coproduct taking $X_0 = \emptyset$ and $\tau_0 = \top$). Finally, coequalizers in $\Pro L\MA_\fin$ can be described for instance as
\[\begin{tikzcd}[ampersand replacement=\&]
	{\LT_L(Y,\sigma)} \& {\LT_L(X,\tau)} \& {\LT_L(X,\tau \wedge (h_0 \leftrightarrow h_1))}
	\arrow["h_0", shift left, from=1-1, to=1-2]
	\arrow["h_1"', shift right, from=1-1, to=1-2]
	\arrow[from=1-2, to=1-3]
\end{tikzcd}\]
where $h_0 \leftrightarrow h_1$ is the formula
$$\bigwedge\{\varphi_0^y \leftrightarrow \varphi_1^y\ \vert\ y \in Y\} \in \Formulas(X).$$
and $\varphi_i^y$ is a formula in $\Formulas(X)$ such that $h_i([y]_{(Y,L,\sigma)}) = [\varphi_i^y]_{(X,L,\tau)}$ for $i = 0, 1$.

\subsection{Regular and coregular factorizations}\label{subsec:beth}
In a category $\mathsf{C}$ with kernel pairs and coequalizers of kernel pairs, we have a way of factorizing each arrow. Namely, given $f \colon A \longrightarrow B$ in $\mathsf{C}$, we can consider its kernel pair $k_0, k_1 \colon K \rightrightarrows A$ and the coequalizer $e \colon A \longrightarrow Q$ of $k_0$ and $k_1$; the universal property of the coequalizer induces a (unique) $m \colon Q \longrightarrow B$ such that $m \circ e = f$. This factorization is called \emph{regular factorization}; its dual notion is called \emph{coregular factorization}.

Let us analyze coregular factorization in $L\KFr_\lf$ (this is available too, because $L\KFr_\lf$ is complete and co-complete, see~\cref{prop:colim} and~\cref{cor:lim}).  Consider a p-morphism $f \colon W \longrightarrow V$ in $L\KFr_\lf$. Define
$$U \coloneqq \{(v,i) \in V \times \{0,1\}\ \vert\ \text{if } i = 0, \text{ then } v \notin \Image f\}$$
with inclusions $\iota_i \colon V \longrightarrow U$ such that $\iota_0(v) = (v,0)$ if $v \notin \Image f$, $\iota_0(v) = (v,1)$ if $v \in \Image f$, and $\iota_1(v) = (v,1)$. Set
$$(v,i) \prec (v',i') ~~~~~\iff~~~~~ v \prec v' \text{ and } (i = i' \text{ or } (i<i' \text{ and } v' \in \Image f))$$
With such definitions, $U$ is a locally-finite Kripke frame and $\iota_0$ and $\iota_1$ are p-morphisms; as a consequence, $U$ satisfies $L$, being a p-morphic image of the disjoint union $V + V$. Moreover, it is straightforward to verify that $\iota_0$ and $\iota_1$ is the cokernel pair of $f$ in $L\KFr_\lf$. The equalizer in $L\KFr_\lf$ of $\iota_0$ and $\iota_1$ can be easily identified with the image of $f$, since $\iota_0(v) = \iota_1(v)$ if and only if $v \in \Image f$. We then have the coregular factorization in $L\KFr_\lf$
\[\begin{tikzcd}[ampersand replacement=\&,column sep=small]
	W \&\& V \&\& U \\
	\& {\Image f}
	\arrow["f", from=1-1, to=1-3]
	\arrow["e"', from=1-1, to=2-2]
	\arrow["{\iota_1}"', shift right, from=1-3, to=1-5]
	\arrow["{\iota_0}", shift left, from=1-3, to=1-5]
	\arrow["m"', from=2-2, to=1-3]
\end{tikzcd}\]
where $m$ is the inclusion of $\Image f$ in $V$ and $e$, which sends $w \in W$ to $f(w)$, is the unique morphism in $L\KFr_\lf$ induced by the universal property of the equalizer.

As a consequence,
\begin{prop}\label{prop:injsurj}
An arrow $f$ in $L\KFr_\lf$ is
\begin{enumerate}
    \item[(e)] an epimorphism if and only if it is surjective;
    \item[(m)] a regular monomorphism if and only if it is injective.
\end{enumerate}
The coregular factorization of $f$ is the epi/regular mono factorization.
\end{prop}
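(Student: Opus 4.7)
The plan is to exploit the explicit coregular factorization $f = m \circ e$ just constructed: by construction $m$ is a regular monomorphism (equalizer of the cokernel pair $(\iota_0, \iota_1)$) and $e$ is surjective as a function. Once (e) and (m) are established, the last sentence of the proposition follows immediately, since $e$ is then an epimorphism and $m$ a regular monomorphism.

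For (e), the direction ``surjective $\Rightarrow$ epi'' is a one-line concrete-category argument: two p-morphisms $g, h \colon V \rightrightarrows Y$ with $gf = hf$ must agree on $\Image f$, which is all of $V$ when $f$ is surjective. For the converse, I would use the explicit cokernel pair $(\iota_0, \iota_1)$ built above: if $f$ is epi, then from $\iota_0 f = \iota_1 f$ one gets $\iota_0 = \iota_1$; but the formula for $U$ exhibits $\iota_0(v) = (v,0) \neq (v,1) = \iota_1(v)$ for every $v \notin \Image f$, which forces $\Image f = V$.

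For (m), the key idea is to show that $f$ is injective if and only if the canonical $e$ in the factorization is an isomorphism. One direction is standard: a bijective p-morphism in $L\KFr_\lf$ is an isomorphism, since the set-inverse is stable by openness of $f$ and open by stability of $f$; hence injectivity of $f$ makes $e$ bijective, hence iso, so $f \cong m$ is a regular mono. For the converse, assume $f$ equalizes a pair $p, q \colon V \rightrightarrows U'$. The universal property of the cokernel pair $(\iota_0, \iota_1)$ of $f$ yields a unique $k \colon U \to U'$ with $k \iota_0 = p$ and $k \iota_1 = q$; composing with $m$ gives $pm = k\iota_0 m = k\iota_1 m = qm$, so by universality of $f$ as equalizer of $p, q$ there is $r \colon \Image f \to W$ with $fr = m$. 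A short diagram chase using that $f$ and $m$ are both monic then yields $re = \id_W$ and $er = \id_{\Image f}$, so $e$ is iso and $f$ is injective.

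The only genuine obstacle is the categorical diagram chase in the ``regular mono $\Rightarrow$ injective'' step, where one must juggle simultaneously the universal property of the cokernel pair of $f$ and that of the equalizer witnessing $f$ as a regular mono. Everything else reduces to routine verifications and the classical fact that bijective p-morphisms are isomorphisms of Kripke frames.
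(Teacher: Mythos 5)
Your proof is correct and takes essentially the same route as the paper: both rest on the explicit cokernel pair $(\iota_0,\iota_1)$ and its equalizer $\Image f$ constructed just before the statement, reading (e) off from ``$f$ epi iff $\iota_0=\iota_1$ iff $\Image f = V$'' and (m) off from the fact that a regular mono equalizes its own cokernel pair, so that $e$ is an isomorphism. The only difference is that you spell out the diagram chase behind that last standard fact, which the paper simply recalls.
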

\begin{proof}
For the first part, observe that $f$ is an epimorphism if and only if $\iota_0 = \iota_1$, if and only if $m$ is an isomorphism, if and only if $f$ is surjective.

For the second part, recall that an arrow is a regular monomorphisms iff it equalizes its cokernel; thus $f$ is a regular monomorphism if and only if $e$ is an isomorphism, if and only if $f$ is injective.
\end{proof}

If we consider the dual category $\Pro L\MA_\fin$, we get that the regular factorization of a morphism $h \colon M \longrightarrow N$ in $\Pro L\MA_\fin$ is preserved by the forgetful functor and hence it is based on the set-theoretic image of $h$ (this comes from a general fact for categories monadic over $\bf Set$~\cite{EV}). The relation between regular and coregular factorization in the category $\Pro L\MA_\fin$ can be expressed in a purely syntactical way. Let $\mathcal{T} = (X+Y,L, \tau)$ be a theory, and let $\varphi(X,Y) \in \Formulas(X+Y)$. \emph{Strong Beth's Definability Property }
\footnote{This is called (in an equivalent formulation for finitary languages) \emph{projective} Beth property in~\cite{Mak99}.
The idea of investigating Beth property via regular and co-regular factorizations is due to M. Makkai.
}  
says that whenever we have
\begin{align}\label{eq:I}
\tau(X,Y_0) \wedge \tau(X,Y_1) \vdash_L \varphi(X,Y_0) \leftrightarrow \varphi(X,Y_1)\tag{I}
\end{align}
we also have that
\begin{align}\label{eq:E}
\text{there exists } \psi(X) \in \Formulas(X), \text{ s.t. } \tau(X,Y)\vdash_L \varphi(X,Y) \leftrightarrow \psi(X)\tag{E}
\end{align}
(here $Y_0$ and $Y_1$ are disjoint copies of $Y$).

\begin{thm}\label{thm:BethStrong}
Strong Beth's Definability Property holds for $\vdash_L$ if and only if regular and coregular factorization coincide in $\Pro L\MA_\fin$.
\end{thm}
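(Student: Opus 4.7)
My plan is to compute both factorizations of an arbitrary morphism $h \colon M \to N$ in $\Pro L\MA_\fin$ explicitly and then read off the condition for their coincidence. First I would present $h$ as an extension of theories $\LT_L(X,\tau_0) \to \LT_L(X+Y,\tau)$ with $(X+Y,\tau) \vdash_L \tau_0$. Since $\Pro L\MA_\fin$ is monadic over $\Set$, regular epis are surjections, and the regular factorization of $h$ is the set-theoretic image factorization $M \twoheadrightarrow h(M) \hookrightarrow N$. The image $h(M)$, viewed as a subalgebra of $N$, consists precisely of those classes $[\varphi(X,Y)]_{(X+Y,\tau)}$ for which there exists $\psi(X)\in\Formulas(X)$ with $\tau(X,Y)\vdash_L \varphi(X,Y)\leftrightarrow \psi(X)$; in other words, membership in $h(M)$ is exactly condition~\eqref{eq:E}.

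Next I would compute the coregular factorization. Applying the pushout description from this section with $h$ as both legs, the cokernel pair of $h$ is the parallel pair
$$\iota_0,\iota_1 \colon \LT_L(X+Y,\tau) \longrightarrow \LT_L(X+Y_0+Y_1,\tau(X,Y_0)\wedge\tau(X,Y_1))$$
that is the identity on $X$ and sends $y\in Y$ to $y_0$, respectively $y_1$. Since the forgetful functor from $\Pro L\MA_\fin$ to $\Set$ preserves equalizers (by monadicity), the equalizer $E \hookrightarrow N$ of $\iota_0,\iota_1$ is the subalgebra of classes $[\varphi(X,Y)]_{(X+Y,\tau)}$ for which $[\varphi(X,Y_0)]=[\varphi(X,Y_1)]$ holds in the pushout; this translates exactly to condition~\eqref{eq:I} on $\varphi$. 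The coregular factorization of $h$ is therefore $M \to E \hookrightarrow N$.

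Since $h(M)\subseteq E$ always holds (because $\iota_0\circ h = \iota_1\circ h$ by the defining property of the cokernel pair), the two factorizations of $h$ coincide precisely when $h(M)=E$, equivalently when condition~\eqref{eq:I} implies condition~\eqref{eq:E} for every $\varphi\in\Formulas(X+Y)$. Letting $h$ range over all morphisms -- and noting that every theory $\tau\in\Formulas(X+Y)$ is realized by taking $\tau_0=\top$ and considering the canonical $\LT_L(X,\top)\to\LT_L(X+Y,\tau)$ -- yields the desired biconditional with Strong Beth's Definability Property.

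The main obstacle I anticipate is essentially bookkeeping: one must verify that the pushout formula displayed in this section really does compute the cokernel pair when both legs are $h$, and that the set-theoretic equalizer condition $\iota_0([\varphi])=\iota_1([\varphi])$ unfolds to the derivability statement~\eqref{eq:I}. Both reductions follow from the description of equality in a Lindenbaum-Tarski algebra together with the pushout presentation in $\Pro L\MA_\fin$, but they are the crucial syntactic moves connecting the categorical construction to the proof-theoretic condition.
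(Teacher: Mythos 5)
Your proposal is correct and follows essentially the same route as the paper: present $h$ as a theory extension, identify the regular factorization with the set-theoretic image (condition~\eqref{eq:E}) and the coregular factorization with the equalizer of the cokernel pair computed via the pushout presentation (condition~\eqref{eq:I}), and observe that coincidence for all $h$ is exactly the implication \eqref{eq:I}$\Rightarrow$\eqref{eq:E}. Your added remarks --- that monadicity justifies both the image description of regular epis and the preservation of the equalizer by the forgetful functor, and that taking $\tau_0=\top$ realizes an arbitrary theory $\tau$ for the converse direction --- are correct refinements of details the paper leaves implicit.
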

\begin{proof}
Consider a morphism of profinite modal algebras $M \longrightarrow N$, which can be presented as $h \colon \LT_L(X,\sigma) \longrightarrow \LT_L(X+Y,\tau)$. Its cokernel pair in $\Pro L\MA_\fin$ is given by
\[\begin{tikzcd}[ampersand replacement=\&]
	{\LT_L(X+Y,\tau)} \& {\LT_L(X+Y_0+Y_1, \tau(X,Y_0) \wedge \tau(X,Y_1))}
	\arrow["q_0", shift left, from=1-1, to=1-2]
	\arrow["q_1"', shift right, from=1-1, to=1-2]
\end{tikzcd}\]
The equalizer of $q_0$ and $q_1$ is the subalgebra of $\LT_L(X+Y,\tau)$ of the equivalence classes of formulae $\varphi(X,Y)$
%elements $[\varphi]_{(X+Y,T)}$
such that \cref{eq:I} holds; it always contains the image of $h$, which is the subalgebra of the equivalence classes of the formulae $\varphi(X,Y)$ such that \cref{eq:E} holds. The two sets coincide precisely when Strong Beth Definability Property happens to be true.
\end{proof}

We can make the characterization of~\cref{thm:BethStrong} even stronger as follows:
\begin{thm}\label{thm:BethStrong1}
Strong Beth's Definability Property holds for $\vdash_L$ if and only if all monomorphisms are regular in $\Pro L\MA_\fin$
(dually, if and only if all epimorphisms are regular in $L\KFr_\lf$).
\end{thm}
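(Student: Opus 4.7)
The strategy is to leverage Theorem \ref{thm:BethStrong}, which already identifies Strong Beth's Definability with the coincidence of the regular and coregular factorizations in $\Pro L\MA_\fin$. It therefore suffices to prove the purely categorical statement: the regular and coregular factorizations coincide on every morphism if and only if every monomorphism in $\Pro L\MA_\fin$ is regular. The dual reformulation in $L\KFr_\lf$ is then immediate from Theorem \ref{thm:produality}, since duality exchanges monomorphisms with epimorphisms and regular monomorphisms with regular epimorphisms.

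For the ``all monos regular $\Rightarrow$ factorizations coincide'' direction, I would take an arbitrary morphism $f$ and its regular factorization $f = m \circ e$, with $e$ regular epi and $m$ mono. By hypothesis $m$ is also a regular mono, so $(e,m)$ is simultaneously an (epi, regular mono) factorization. I then need to show that any such factorization is canonically isomorphic to the coregular factorization $f = m_{\mathrm{cr}} \circ e_{\mathrm{cr}}$, where $m_{\mathrm{cr}}$ equalizes the cokernel pair $q_0, q_1$ of $f$. This is a universal-property argument: since $m$ equalizes some pair $\alpha,\beta$, the equality $\alpha f = \beta f$ makes $\alpha,\beta$ factor through the cokernel pair, forcing $m_{\mathrm{cr}}$ to factor through $m$; conversely, from $q_0 m e = q_1 m e$ and $e$ being epi one derives $q_0 m = q_1 m$, so $m$ factors through $m_{\mathrm{cr}}$. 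The two factorizations are mutual inverses by the mono cancellation properties of $m$ and $m_{\mathrm{cr}}$.

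For the converse, suppose the two factorizations coincide on every morphism, and let $f \colon M \to N$ be a monomorphism. Its kernel pair is the trivial pair $(\id_M,\id_M)$, so its regular factorization is the degenerate one $f = f \circ \id_M$, with $f$ itself playing the role of the mono component. By our hypothesis this is also the coregular factorization of $f$, whose mono component is a regular mono by construction; hence $f$ is a regular monomorphism. The main obstacle is precisely making the uniqueness step of the first direction fully precise; beyond that, nothing in the argument requires features of $\Pro L\MA_\fin$ other than the existence of the relevant limits and colimits, which is guaranteed by the completeness/cocompleteness established in Section~\ref{sec:pre}.
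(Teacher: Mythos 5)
Your argument is correct and is essentially the paper's own proof transported to the dual side: the paper argues in $L\KFr_\lf$ that the coregular (epi/regular-mono) factorization becomes the regular one once all epis are regular, and that coincidence of the factorizations forces every epi to be regular via the degenerate cokernel pair — exactly the mirror of your two directions, with you additionally spelling out the uniqueness of epi/regular-mono factorizations that the paper leaves implicit. One small correction to your closing remark: the first direction needs more than the existence of limits and colimits, namely that the mono component of the regular factorization really is a monomorphism, i.e.\ regularity of $\Pro L\MA_\fin$, which the paper supplies via monadicity over $\Set$.
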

 \begin{proof}
    From~\cref{prop:injsurj}, we know that the coregular factorization in $L\KFr_\lf$ is the surjective/injective factorization, which is an epi/regular mono factorization. Thus, if all epis are regular, such factorization coincides with the regular factorization. Vice versa, if the two factorizations coincide, all epis are regular (because the second component of the coregular factorization of an epi is the identity).
 \end{proof}

If the regular and the coregular factorizations coincide, not only monos are regular, but epis are regular too (by the specular argument). However, regularity of epis and monos are not equivalent conditions, as we show in Example~\ref{ex:wbeth} 
below.
%(keep in mind that we have  an  epi/regular mono factorization system in $L\KFr_\lf$ - dually a regular epi/mono factorization system in $\Pro L\MA_\fin$).
 
Regularity of epis in $\Pro L\MA_\fin$, 
syntactically speaking, corresponds to another version of the Beth Definability Property. This version 
(that we call \emph{Weak Beth Definability Property} or simply \emph{Beth Definability Property}) says that 
for all $X,Y, \tau(X,Y)$, if we have 
\begin{align}\label{eq:Iw}
\tau(X,Y_0) \wedge \tau(X,Y_1) \vdash_L 
\bigwedge_{y\in Y} (y_0\leftrightarrow y_1)
\tag{wI}
\end{align}
we also have that, for all $y\in Y$
\begin{align}\label{eq:Ew}
\text{there exist}\;  \psi_y(X) \in \Formulas(X), \text{ s.t. } \tau(X,Y)\vdash_L  \bigwedge_{y\in Y} (y\leftrightarrow \psi_y(X))\tag{wE}
\end{align}
(here again $Y_0$ and $Y_1$ are disjoint copies of $Y$).
%obtained by the strong one by taking
% as the formula $\varphi(X,Y)$ the formula  $y$ itself.
\begin{prop}\label{prop:wB}
Beth's Definability Property holds for $\vdash_L$ if and only if all epimorphisms are regular in $\Pro L\MA_\fin$
%sg july25  aggiunta riga per simmetria col teorema precedente
(dually, if and only if all monomorphisms are regular in $L\KFr_\lf$).
\end{prop}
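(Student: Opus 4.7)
The plan is to follow the blueprint of the proof of Theorem~\ref{thm:BethStrong} essentially verbatim, replacing strong Beth by (weak) Beth and the coincidence of factorizations by regularity of epimorphisms. The key is a pair of syntactic characterizations: the canonical morphism $h\colon \LT_L(X,\sigma)\to \LT_L(X+Y,\tau)$ is an epi iff~\cref{eq:Iw} holds, and it is a regular epi (equivalently, surjective, by monadicity of $\Pro L\MA_\fin$ over $\Set$) iff~\cref{eq:Ew} holds.

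For the first characterization I compute the cokernel pair exactly as in the proof of Theorem~\ref{thm:BethStrong}: fixing any $\sigma\in\Formulas(X)$ with $\tau\vdash_L\sigma$ (for instance the conjunction of all formulas of $\Formulas(X)$ derivable from $\tau$), the canonical morphism $h$ is well-defined and its cokernel pair is
\[ q_0,q_1\colon \LT_L(X+Y,\tau)\rightrightarrows \LT_L(X+Y_0+Y_1,\tau(X,Y_0)\wedge\tau(X,Y_1)). \]
Since both $q_i$ preserve all operations and agree on $X$, one has $q_0=q_1$ iff $q_0([y])=q_1([y])$ for every $y\in Y$, which is precisely~\cref{eq:Iw}.

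For the second characterization, the nontrivial direction is to derive surjectivity of $h$ from~\cref{eq:Ew}: each $[y]=h([\psi_y]_{(X,\sigma)})$ lies in $\Image(h)$, and since morphisms in $\Pro L\MA_\fin$ preserve arbitrary joins and meets, $\Image(h)$ is a sub-profinite-$L$-algebra closed under all formula-forming operations; containing every $[x]$ and every $[y]$, it must, by induction on $\varphi$, contain $[\varphi]$ for every $\varphi\in\Formulas(X+Y)$, and hence coincide with $\LT_L(X+Y,\tau)$. Conversely, surjectivity trivially yields~\cref{eq:Ew}, since any preimage of $[y]$ is represented by some $\psi_y\in\Formulas(X)$.

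Combining the two facts gives the proposition. If Beth holds, then any epi in $\Pro L\MA_\fin$, presented in standard form as some $h$, satisfies~\cref{eq:Iw} (by the first characterization), hence~\cref{eq:Ew} (by Beth), hence is surjective (by the second), hence regular. Conversely, given $(X,Y,\tau)$ satisfying~\cref{eq:Iw}, the associated $h$ is an epi, therefore regular by hypothesis, therefore surjective, therefore~\cref{eq:Ew} holds. The dual statement about $L\KFr_\lf$ follows via Goldblatt--Thomason duality. The main subtlety is the closure argument in the second characterization: weak Beth only provides definitions for the generators $y$, not for all formulas as strong Beth does, and it is precisely the infinitary completeness of morphisms in $\Pro L\MA_\fin$ that lets the definability of generators propagate to all formulas and hence makes this weaker hypothesis suffice for full surjectivity.
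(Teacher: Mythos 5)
Your proposal is correct and follows essentially the same route as the paper's (very terse) proof: characterize epimorphisms via triviality of the cokernel pair, giving~\cref{eq:Iw}, and regular epimorphisms via surjectivity (from monadicity), giving~\cref{eq:Ew}; your closure/induction argument showing that definability of the generators $y$ propagates to all of $\Formulas(X+Y)$ is exactly the detail the paper leaves implicit. The only cosmetic point is that your candidate $\sigma$ should be the conjunction of the derivable \emph{normal} formulas in $\Norm(X)$ so that it is a set-sized conjunction, as the paper does elsewhere.
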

\begin{proof}
Consider the same situation as in \cref{thm:BethStrong}.
%, with $Y = \{y\}$. 
We have that $h$ is an epimorphism if and only if $q_0 = q_1$ if and only if \cref{eq:Iw} holds. On the other side, $h$ is a regular epimorphism if and only if it is surjective if and only if \cref{eq:Ew} holds.  
\end{proof}

\begin{rem}
Our formulation of the Beth Definability Property does not match with the current terminology used in  finitary languages, where it is assumed that the set $Y$ in \eqref{eq:Iw} and \eqref{eq:Ew} is a singleton (or, equivalently, that it is finite~\cite{Ghi-Zaw}). Since our formulas may contain infinitely many variables, such finitary formulation does not look natural. In the finitary version, Beth definability Property is known to hold for all intermediate logics and for all logics above $K4$~\cite{Mak92}, because all epimorphisms between finitely presented algebras in varieties of Heyting or $K4$ algebras are regular (but same is not true if the restriction to finitely presented algebras is dropped~\cite{Mor17,Mor20}). 
\end{rem}

We show that Weak Beth Definability Property is really weaker than Strong Beth Definability Property:

\begin{example}\label{ex:wbeth}
Consider the Kripke frame $V$ having $\{0,1,2\}$ as underlying set and such that $\prec = \{(0,1),(0,2),(1,2),(2,1)\}$. If $L$ is the logic obtained from $K$ by adding the axioms
\begin{align*}
    &\pos \top \\
    &\nec (\pos x \rightarrow \nec x) \\
    &\pos x \wedge \pos (y \wedge \neg x) \rightarrow \nec [ (x \vee y) \wedge (x \rightarrow \nec y) \wedge (y \rightarrow \nec x))]
\end{align*}
then
$$L\KFr_\lf = \{W \in \KFr\ \vert\ \forall w \in W(w^* \text{ is a subreduction\footnotemark of } V)\}.$$\footnotetext{A subreduction of $V$ is a p-morphic image of some generated subframe of $V$ (a generated subframe of $V$ is obtained by taking a $\prec^*$-closed subset of $V$ and by restricting to it the relation $\prec$).}
We can list the subreductions of $V$: $V$ itself, $V_1=(\{1,2\},\{(1,2),(2,1)\})$, $V_2 = (\{0,1\},\{(0,1),(1,1)\})$, and the one-element reflexive Kripke frame $\mathbf{1}$.

On the one hand, the unique  map $V\longrightarrow \bf 1$  is epi, but it is not a regular epi, because the coequalizer of its kernel pair is the onto map $q: V\longrightarrow V_2$. This means that Strong Beth's Property does not hold (\cref{thm:BethStrong1}).
 
 On the other hand, we show that all monomorphisms are injective in $L\KFr_\lf$ (recall that Weak Beth's Property holds if and only if all monomorphisms are regular in $L\KFr_\lf$ by \cref{prop:wB}, and that the regular monomorphisms are the injective p-morphisms in $L\KFr_\lf$ by \cref{prop:injsurj}). First notice, by case inspection, that every root-preserving (i.e. mapping a root to a root) monomorphisms among rooted frames  in $L\KFr_\lf$ is an isomorphism. Pick now an arbitrary monomorphism $f: W\longrightarrow U$ and $w_1\neq w_2\in W$ such that $f(w_1)=f(w_2):=u \in U$. We cannot have $w_1\in w_2^*$ or $w_2\in w_2^*$, by the case inspection above. Again, by the same case inspection, the restrictions $w_i^* \longrightarrow f(w^*_i) = u^*$ of the p-morphism $f$ (for $i=1,2$) must be isomorphisms. It is now easy to define two parallel p-morphisms $u^* \longrightarrow W$ whose composite with $f$ are equal, contradicting the fact that $f$ is mono.
\end{example}

In general, (weak, hence strong) Beth's Definability Property does not hold, as shown by the following counterexample taken from~\cite{Copr}. 

\begin{example}\label{ex:beth}
Consider the Kripke frame $V$ having $\{0,1\}$ as underlying set and such that $\prec = \{(0,0), (0,1), (1,0)\}$. It is not difficult to verify that, taking $L$ as the logic obtained from $K$ by adding the axioms
\begin{align*}
    &\pos \pos x \rightarrow \pos x \vee x \\
    &\pos \top \\ % (x \vee \neg x) \\
    &x \wedge \pos (y \wedge \neg x) \rightarrow \nec (\neg x \rightarrow y) \\
    &x \wedge \pos \neg x \rightarrow (\pos x \leftrightarrow \pos \nec x)
\end{align*}
then
\begin{equation}\label{eq:V}
L\KFr_\lf = \{W \in \KFr_\lf\ \vert\ \forall w \in W (w^* \text{ is a p-morphic image of } V)\}.\tag{V}
\end{equation}
Now notice that for every $W\in L\KFr_\lf$ and $w\in W$, any morphism $w^*\longrightarrow V$ must be the identity, because $w^*$ is a p-morphic image of $V$, according to \eqref{eq:V}.
Hence given 
a pair of morphisms $f,g \colon W \longrightarrow V$ in $L\KFr_\lf$, we must have $f=g$.
%
%the surjective p-morphism $V \longrightarrow w^*$ %in the characterization of $L\KFr_\lf$
%equalizes $f|_{w^*}$ and $g|_{w^*}$ for each $w \in W$ (notice that any p-morphism with domain $V$ must be either the identity or the one-point collapsing p-morphism);
%$V \longrightarrow V$ must be the identity); 
%as a consequence, $f = g$. 
%
This implies that any morphism in $L\KFr_\lf$ having $V$ as domain is a monomorphism. In particular, the p-morphism $V \longrightarrow \mathbf{1}$, where $\mathbf{1}$ is the one-point reflexive Kripke frame, is a monomorphism in $L\KFr_\lf$ that is not regular (regular monomorphisms are injective p-morphisms, by \cref{prop:injsurj}).
\end{example}

However, Weak Beth's Definability Property holds for extensions of $K4$.
\begin{prop}\label{prop:regmono}
If $L$ extends $K4$, then all monomorphisms are regular in $L\KFr_\lf$ (thus all epimorphisms are regular in $\Pro L\MA_\fin$). 
\end{prop}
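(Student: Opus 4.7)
By \cref{prop:injsurj}, the regular monomorphisms in $L\KFr_\lf$ coincide with the injective p-morphisms, so the goal reduces to showing that every monomorphism $f\colon W\longrightarrow U$ in $L\KFr_\lf$ is injective when $L\supseteq K4$. My plan is to argue by contradiction and to reduce the problem to the finite level: assume $w_1\neq w_2$ but $f(w_1)=f(w_2)=u$, and derive a violation of mono-ness inside $L\KFr_\fin$, where the desired fact is already known.

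The reduction step relies on transitivity and local finiteness. Set $V := w_1^*\cup w_2^*$, the subframe of $W$ generated by $\{w_1,w_2\}$. By local finiteness, both $w_i^*$ are finite, hence so is $V$. By construction each $w_i^*$ is $\prec$-closed in $W$, so $V$ is a generated subframe; thus the inclusion $\iota\colon V\hookrightarrow W$ is a p-morphism, and since generated subframes preserve the validity of modal formulas we have $V\in L\KFr_\fin$. Using openness of $f$ one checks that $f(V)$ is also $\prec$-closed in $U$; explicitly $f(V)=u^*$, which therefore lies in $L\KFr_\fin$ as well.

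Next, I transfer the mono property of $f$ to the restriction $f|_V\colon V\longrightarrow u^*$ inside $L\KFr_\fin$. Given any pair $g_1,g_2\colon K\rightrightarrows V$ with $K\in L\KFr_\fin$ and $f|_V\circ g_1=f|_V\circ g_2$, composing with $\iota$ produces $\iota g_1,\iota g_2\colon K\rightrightarrows W$ in $L\KFr_\lf$ equalized by $f$; since $f$ is mono and $\iota$ is injective, $g_1=g_2$. So $f|_V$ is mono in $L\KFr_\fin$.

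The final step, and the main obstacle, is to invoke the classical finite-level result: monomorphisms in $L\KFr_\fin$ (for $L\supseteq K4$) are injective. Dually, through the restricted Goldblatt-Thomason duality, this is the well-known theorem that epimorphisms between finite $L$-algebras are surjective, a consequence of Makkai's result~\cite{Mak92} on epimorphisms between finitely presented algebras in varieties above $K4$ (finite $L$-algebras being finitely presented in $L\MA$). Applying it to $f|_V$ yields injectivity of $f|_V$, contradicting $f|_V(w_1)=u=f|_V(w_2)$. The point worth underlining is that the profinite setting of the present paper lets us reduce every potential counterexample to a finite subframe, bypassing the pathology that obstructs the analogous statement for arbitrary (non-finitely-presented) algebras in $L\MA$ noted in the remark preceding this proposition.
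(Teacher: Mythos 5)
Your reduction to the finite level is sound: $V=w_1^*\cup w_2^*$ is a finite generated subframe, $f(V)=u^*$ is a finite generated subframe of $U$, the restriction $f|_V\colon V\longrightarrow u^*$ is a p-morphism, and the composition-with-$\iota$ argument correctly shows that $f|_V$ inherits the mono property. The gap is in the final step, which is where all the content lies. What you have actually established is that $f|_V$ is a monomorphism \emph{in $L\KFr_\fin$} (equivalently, that its dual is an epimorphism \emph{in the category $L\MA_\fin$ of finite $L$-algebras}, i.e.\ an arrow whose post-compositions are tested only against finite algebras). The result you cite concerns epimorphisms \emph{in the variety $L\MA$} between finitely presented algebras, i.e.\ arrows tested against arbitrary (equivalently, arbitrary finitely presented, generally infinite) $L$-algebras. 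Since there are fewer test objects in $L\MA_\fin$, ``epi in $L\MA_\fin$'' is a priori \emph{weaker} than ``epi in $L\MA$'', so the implication you need runs in the wrong direction: Maksimova's theorem applies under a stronger hypothesis than the one you have verified. To bridge the two you would have to show that the cokernel pair $B+_AB$ in $L\MA$ (a finitely presented but generally infinite algebra) is residually finite, so that its two coprojections, which your hypothesis identifies after every map into a finite algebra, are actually equal. This bridge is in fact available over $K4$ with the finite model property (the global consequence from one premise reduces to $\vdash_L \nec^+\tau\to\varphi$, and the open filter generated by $\tau$ is principal on $\nec^+\tau$), but it is a genuine additional argument that your proof does not contain; as written, the citation does not close the case.

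For comparison, the paper's proof is self-contained and needs no external input: it takes a witness $w$ of non-injectivity with $|w^*|$ minimal, shows (using transitivity) that a clash $f(w)=f(w')$ with $w'\in w^*$ forces $w^*=w'^*$ and makes the transposition $\sigma_{w,w'}$ of $w^*$ a p-morphism, so that $\iota$ and $\iota\sigma_{w,w'}$ are two distinct arrows $w^*\longrightarrow W$ equalized by $f$, contradicting mono-ness; an explicit parallel pair built from the resulting isomorphism $w^*\cong f(w)^*$ handles the case $w'\notin w^*$. You could either run that transposition argument directly on your finite frame $V$ (making the reduction and the citation unnecessary), or keep your route and supply the residual-finiteness step explicitly.
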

\begin{proof}
Let $f \colon W \longrightarrow V$ be a monomorphism in $L\KFr_\lf$.

Suppose we have $w \in W$ such that there exists $w' \in W$ with $w' \neq w$ and $f(w') = f(w)$; consider $w$ having minimal $|w^*|$ with this property (in $L\KFr_\lf$ we only have locally-finite frames, hence the finite cardinality $|w^*|$ of $w^*$ is well defined).

Consider first the case where $w' \in w^*$; by the minimality condition with respect to the property above, it must be $w \in w'^*$ (otherwise $|w'^*| < |w^*|$), hence $w^*=w'^*$. Thanks to transitivity, the function $\sigma_{w,w'} \colon w^* \longrightarrow w^*$ swapping $w$ and $w'$ (and fixing all the other points) defines a p-morphism (hence a morphism in $L\KFr_\lf$, since $w^*$ is a generated subframe of $W$). By $f(w)=f(w')$, the following diagram in $L\KFr_\lf$
\[\begin{tikzcd}
	{w^*} & W \\
	W & V
	\arrow["\iota", from=1-1, to=1-2]
	\arrow["{\iota \sigma_{w,w'}}"', from=1-1, to=2-1]
	\arrow["f", from=1-2, to=2-2]
	\arrow["f"', from=2-1, to=2-2]
\end{tikzcd}\]
commutes ($\iota$ is the inclusion $w^* \subseteq W$). $f$ being a monomorphism, we have that $\iota \sigma_{w,w'} = \iota$; in particular, $w = w'$, which is a contradiction.

We have proved that the restriction $w^* \longrightarrow f(w^*) = f(w)^*$ of the p-morphism $f$ (it defines a morphism in $L\KFr_\lf$) is injective, hence it is an isomorphism (it is surjective by definition and it is straightforward to see that bijective p-morphisms are isomorphisms in $\KFr$); we can then consider its inverse $h \colon f(w^*) \longrightarrow w^*$.

We are left with the case $w' \not\in w^*$.
%Given $w' \neq w$ such that
Since $f(w) = f(w')$, we have $f(w^*) = f(w)^* = f(w')^* = f(w'^*)$, so we can consider the morphism $g \colon w'^* \longrightarrow w^*$ given by the composition of the restriction $w'^* \longrightarrow f(w')^* = f(w)^*$ of $f$ with $h \colon f(w^*) \longrightarrow w^*$. By definition, the following diagram in $L\KFr_\lf$
\[\begin{tikzcd}
	{w'^*} & W \\
	W & V
	\arrow["{\iota'}", from=1-1, to=1-2]
	\arrow["{\iota g}"', from=1-1, to=2-1]
	\arrow["f", from=1-2, to=2-2]
	\arrow["f"', from=2-1, to=2-2]
\end{tikzcd}\]
commutes ($\iota'$ is the inclusion $w'^* \subseteq W$). $f$ being a monomorphism, we have that $\iota g = \iota'$; in particular, $w = w'$, which is a contradiction.

We conclude that $f$ is injective, i.e. that it is a regular monomorphism (by \cref{prop:injsurj}).
\end{proof}

\subsection{r-Regularity}

We have seen in~\cref{prop:injsurj} that every arrow in $L\KFr_\lf$ factors as an epi followed by a regular mono: in this section we investigate \emph{stability} of this factorization system and relate it to a further well-known syntactic property, namely Craig's Interpolation Theorem. We start recalling the general definition of a factorization system. 
\begin{defn}
Given a category $\mathsf{C}$, a pair of classes of arrows $\langle \mathcal{E},\mathcal{M} \rangle$ is said to be a \emph{factorization system} for $\mathsf{C}$ if and only if the following three conditions are satisfied:% (see [FK], but we follow the equivalent formulation of [CJKP]):
\begin{enumerate}
    \item[\rm (i)] both $\mathcal{E}$ and $\mathcal{M}$ contain identities and are closed under left and right composition with isomorphisms;
    \item[\rm (ii)] each map $f$ in $\mathsf{C}$ can be written as $m \circ e$ with $m \in \mathcal{M}$ and $e \in \mathcal{E}$;
    \item[\rm (iii)] whenever we have a commutative square,
    \[\begin{tikzcd}[ampersand replacement=\&]
	A \& B \\
	C \& D
	\arrow["e", from=1-1, to=1-2]
	\arrow["g"', from=1-1, to=2-1]
	\arrow["f", from=1-2, to=2-2]
	\arrow["m"', from=2-1, to=2-2]
    \end{tikzcd}\]
    with $m \in \mathcal{M}$ and $e \in \mathcal{E}$, there is a unique $h \colon B \longrightarrow C$ such that $h \circ e = g$ and $m \circ h = f$;
\end{enumerate}
The factorization system is said to be \emph{stable} if $\mathsf{C}$ has finite limits and the following further condition is satisfied
 \begin{enumerate}
    \item[\rm (iv)] whenever we have a pullback square
    \[\begin{tikzcd}[ampersand replacement=\&]
	A \& B \\
	C \& D
	\arrow["{f'}", from=1-1, to=1-2]
	\arrow["{e'}"', from=1-1, to=2-1]
	\arrow["e", from=1-2, to=2-2]
	\arrow["f"', from=2-1, to=2-2]
    \end{tikzcd}\]
    the fact that $e \in \mathcal{E}$ implies that $e' \in \mathcal{E}$.
\end{enumerate}
\end{defn}

The decomposition in (ii) is said to be a \emph{factorization} for $f$; this factorization is unique in the sense that if $f = m \circ e$ can be factored as well as $m' \circ e'$, for $m' \in \mathcal{M}$ and $e' \in \mathcal{E}$, then using (iii), it can be shown that there is an invertible map $h$ such that $h \circ e = e'$ and $m' \circ h = m$. In a factorization system, it turns out that both $\mathcal{E}$ and $\mathcal{M}$ are closed under composition~\cite{Bor}.% [CJKP].

\begin{defn}
We say that a category $\mathsf{C}$ is \emph{r-regular}~\cite{Ghi-Zaw} iff it has finite limits and moreover, taking all epimorphisms as $\mathcal{E}$ and all regular monomorphisms as $\mathcal{M}$, we get a stable factorization system for $\mathsf{C}$.
\end{defn}
As conditions (i) and (iii) are trivially true in this case (by the definition of epi and regular mono), $\mathsf{C}$ is r-regular if and only if it has finite limits, each arrow has an epi-regular mono factorization and epimorphisms are stable under pullbacks.

Consider now $L\KFr_\lf \simeq \Pro L\MA_\fin^\op$. By \cref{prop:injsurj}, epimorphisms are surjective p-morphisms and regular monomorphisms are injective p-morphisms and each arrow has an epi-regular mono factorization. Moreover, as we said in \cref{cor:lim}, $L\KFr_\lf$ has all (finite) limits. Condition (iv) can be decomposed into two parts: thanks to factorizations, we can check stability of epimorphisms under pullbacks separately 
for the case where the pullback is taken along a 
 regular monomorphism or along an epimorphism.
\begin{prop}\label{prop:cep}
In $L\KFr_\lf$ epimorphisms are stable under pullback along regular monomorphisms.
\end{prop}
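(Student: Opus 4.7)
The plan is to use the characterizations from \cref{prop:injsurj}: an epimorphism in $L\KFr_\lf$ is a surjective p-morphism and a regular monomorphism is an injective p-morphism. The key preliminary observation is that an injective p-morphism $m \colon C \hookrightarrow D$ is, up to isomorphism, the inclusion of a \emph{generated subframe}, meaning that $m(C)$ is $\prec_D$-upward closed (if $m(c) \prec_D d'$ then $d' \in m(C)$) and $\prec_C$ coincides with the restriction of $\prec_D$ along $m$. Both properties follow by combining injectivity with the openness condition in the definition of p-morphism.

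Given the pullback datum with $e \colon B \twoheadrightarrow D$ surjective and $m \colon C \hookrightarrow D$ injective, I would construct the pullback concretely as $A := e^{-1}(m(C))$, equipped with the restriction of $\prec_B$. Stability of $e$ ensures that $A$ is closed under $\prec_B$: if $a \in A$ and $a \prec_B b$, then $e(a) \prec_D e(b)$, and since $e(a) \in m(C)$ and $m(C)$ is $\prec_D$-closed, also $e(b) \in m(C)$, giving $b \in A$. Hence $A$ is a generated subframe of $B$; in particular $A$ is locally finite (since $B$ is), and validity of the logic $L$ is inherited from $B$ because frame validity of modal formulas is preserved under generated subframes. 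The inclusion $f' \colon A \hookrightarrow B$ is a p-morphism, and the restriction $e' \colon A \to C$ of $e$ (using the identification of $C$ with $m(C)$) is also a p-morphism, since openness for $e'$ at a point $a \in A$ follows from openness of $e$ at $a$ together with the fact that any $\prec_D$-successor of $e(a)$ in $m(C)$ lifts to a successor already in $A$.

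To verify that $A$ is indeed the pullback in $L\KFr_\lf$, take any $X \in L\KFr_\lf$ and p-morphisms $\alpha \colon X \to B$, $\gamma \colon X \to C$ with $e \circ \alpha = m \circ \gamma$. For every $x \in X$ we have $e(\alpha(x)) = m(\gamma(x)) \in m(C)$, so $\alpha(X) \subseteq A$; this yields the unique factorization $X \to A$ through which both $\alpha$ and $\gamma$ are recovered. Finally, surjectivity of $e' \colon A \to C$ follows immediately from surjectivity of $e$: for $c \in C$, pick $b \in B$ with $e(b) = m(c)$; then $b \in A$ by definition, and $e'(b) = c$.

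The main conceptual step is the identification of regular monomorphisms with inclusions of generated subframes, which makes the pullback computation reduce to a concrete set-theoretic preimage construction; everything else is a routine verification. I expect no serious obstacle, provided one is comfortable using the standard fact that generated subframes preserve $L$-validity for any normal modal logic $L$.
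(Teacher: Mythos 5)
Your proof is correct and follows essentially the same route as the paper: both construct the pullback concretely as the preimage under the epimorphism of the image of the regular mono, observe that this is a generated subframe (hence locally finite and in $L\KFr_\lf$), verify the universal property, and conclude that surjectivity is inherited by the restricted map. Your explicit identification of injective p-morphisms with inclusions of generated subframes is a detail the paper leaves as ``straightforward to verify,'' but it is the same argument.
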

\begin{proof}
Consider a diagram in $L\KFr_\lf$
\[\begin{tikzcd}[ampersand replacement=\&]
	\& W \\
	U \& V
	\arrow["f", from=1-2, to=2-2]
	\arrow["m"', from=2-1, to=2-2]
\end{tikzcd}\]
with $m$ injective (regular mono). Consider the Kripke frame having
$$f^*(U) \coloneqq \{w \in W\ \vert\ \exists u \in U (f(w) = m(u))\}$$
as underlying set, endowed with the restriction of the binary relation of $W$ (it belongs to $L\KFr_\lf$, being a generated subframe of $W \in L\KFr_\lf$); the inclusion $m' \colon f^*(U) \longrightarrow W$ is an injective p-morphism. Moreover, the association $f^*(U) \ni w \mapsto u \in U$ such that $f(w)=m(u)$ (such $u$ is unique) defines a p-morphism $f' \colon f^*(U) \longrightarrow U$. It is straightforward to verify that
\[\begin{tikzcd}[ampersand replacement=\&]
	{f^*(U)} \& W \\
	U \& V
	\arrow["m'", from=1-1, to=1-2]
	\arrow["f'"', from=1-1, to=2-1]
	\arrow["f", from=1-2, to=2-2]
	\arrow["m"', from=2-1, to=2-2]
\end{tikzcd}\]
is a diagram of pullback in $L\KFr_\lf$. The claim follows from the fact that $f'$ is surjective whenever $f$ is so.
\end{proof}

\cref{prop:cep} is the dual of the \emph{congruence extension property} (which is then proved to hold in  $\Pro L\MA_\fin$).

As a consequence of~\cref{prop:cep}, $L\KFr_\lf$ is r-regular if and only if epimorphisms are stable under pullbacks along epimorphisms. The last condition is equivalent to some nice algebraic property for the dual category $\Pro L\MA_\fin$.
\begin{defn}
A category $\mathsf{C}$ has the \emph{amalgamation property} (AP) if, for any monomorphisms $k \colon A \longrightarrow B$, $g \colon A \longrightarrow C$, there are monomorphisms $h \colon C \longrightarrow D$, $f \colon B \longrightarrow D$ such that $f \circ k = h \circ g$, i.e.\ such that the square
\[\begin{tikzcd}
	A & B \\
	C & D
	\arrow["k", from=1-1, to=1-2]
	\arrow["g"', from=1-1, to=2-1]
	\arrow["f", from=1-2, to=2-2]
	\arrow["h"', from=2-1, to=2-2]
\end{tikzcd}\]
commutes. The pair of monomorphisms $h$ and $f$ is called \emph{amalgamation} for the pair $k$ and $g$.
\end{defn}

It is straightforward to verify that, in a category $\mathsf{C}$ with pushouts, (AP) holds if and only if monomorphisms are stable under pushout along monomorphisms.
\begin{cor}\label{cor:regap}
$L\KFr_\lf$ is r-regular if and only if $\Pro L\MA_\fin$ has the amalgamation property.
\end{cor}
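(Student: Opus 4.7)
The plan is to combine \cref{prop:cep} with the fact that every arrow in $L\KFr_\lf$ admits an epi/regular-mono factorization (\cref{prop:injsurj}) to reduce r-regularity to a single remaining stability condition, and then to dualize.

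First I would observe that, since $L\KFr_\lf$ has finite limits (\cref{cor:lim}), the epi/regular-mono factorization (\cref{prop:injsurj}) already gives two of the three defining properties of r-regularity. Conditions (i) and (iii) are automatic because $\mathcal{E}$ is the class of all epis and $\mathcal{M}$ the class of all regular monos. What remains is condition (iv), the stability of epis under pullbacks. Given an arbitrary map $f$ along which one pulls back an epi, I would factor $f$ as $m\circ e$ with $e$ epi and $m$ regular mono, and then decompose the pullback into two successive pullback squares, one along $m$ and one along $e$. \cref{prop:cep} handles the first, so r-regularity is equivalent to the statement that epis in $L\KFr_\lf$ are stable under pullback along epis.

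Next I would dualize. Through the equivalence $L\KFr_\lf\simeq \Pro L\MA_\fin^{\op}$ (\cref{thm:produality}), epimorphisms in $L\KFr_\lf$ correspond to monomorphisms in $\Pro L\MA_\fin$, and pullback squares correspond to pushout squares. Hence stability of epis in $L\KFr_\lf$ under pullback along epis translates precisely to stability of monos in $\Pro L\MA_\fin$ under pushout along monos. As noted immediately before the statement, this latter property is equivalent to the amalgamation property of $\Pro L\MA_\fin$ (using that $\Pro L\MA_\fin$ has all pushouts, inherited from the colimit description already given).

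Putting the two steps together yields the equivalence. No step looks genuinely difficult here: the main routine point is verifying that the two-step pullback decomposition is legitimate, namely that the pullback of an epi along a regular mono (known to exist and be an epi by \cref{prop:cep}) can then be further pulled back along the epi component of the factorization, and that epis compose — this is standard once one has the factorization system in hand.
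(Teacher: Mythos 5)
Your proposal is correct and follows essentially the same route as the paper: reduce condition (iv) to stability of epis under pullback along epis via \cref{prop:cep} and the pullback-pasting decomposition of $f=m\circ e$, then dualize to stability of monos under pushout along monos in $\Pro L\MA_\fin$, which is the amalgamation property. Nothing to add.
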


\begin{rem}\label{rem:grz}
It is not difficult to see that
$\Pro L\MA_\fin$ has the amalgamation property iff $L\MA_\fin$
has it, too (an easy way to see this is via duality with $L\KFr_\lf$ and $L\KFr_{fin}$). However, the fact that $L\MA_\fin$ has amalgamation \emph{is not sufficient for $L\MA$ to have it} (even if $L$ has finite model property, as always in this paper): a notable counterexample is \emph{Grz.Lin}, see~\cite{Mak82}. 
On the other hand, above $K4$, if $L\MA$ has amalgamation and $L$ has finite model property, it turns out that $L\MA_\fin$ (and consequently also $\Pro L\MA_\fin$) has amalgamation, too (see~\cite[Proposition 2.20]{Ghi-Zaw}).
\end{rem}

We now go to the syntactic side. Let $\varphi \in \Formulas(X_0+X_1)$ and $\psi \in \Formulas(X_0+X_2)$.  \emph{Craig's Theorem} says that whenever
$$\varphi(X_0,X_1) \vdash_L \psi(X_0,X_2)$$
we have that there exists $\sigma \in \Formulas(X_0)$ such that
$$\varphi(X_0,X_1) \vdash_L \sigma(X_0) ~~~~~\text{and}~~~~~ \sigma(X_0) \vdash_L \psi(X_0,X_2).$$
The formula $\sigma$ is called \emph{interpolant} for $\varphi$ and $\psi$.

\begin{thm}\label{thm:craig=amalg}
Craig's Theorem holds for $\vdash_L$ if and only if $\Pro L\MA_\fin$ has the amalgamation property if and only if $L\KFr_\lf$ is r-regular.
\end{thm}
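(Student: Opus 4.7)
The second equivalence, between r-regularity of $L\KFr_\lf$ and the amalgamation property (AP) of $\Pro L\MA_\fin$, is already given by \cref{cor:regap}, so the task reduces to proving Craig's Theorem $\iff$ (AP).

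For Craig $\Rightarrow$ (AP), my plan is as follows. Given monomorphisms $k \colon A \to B$ and $g \colon A \to C$ in $\Pro L\MA_\fin$, I use the standard presentation of morphisms as conservative extensions to write $A = \LT_L(X_0,\tau_0)$, $B = \LT_L(X_0+X_1,\tau_1)$, $C = \LT_L(X_0+X_2,\tau_2)$, with $\tau_i \vdash_L \tau_0$ for $i = 1,2$. The pushout $D = \LT_L(X_0+X_1+X_2,\tau_1\wedge\tau_2)$, as computed at the beginning of \cref{sec:prop}, gives the candidate amalgam. The key step is verifying that $B\to D$ is mono, i.e.\ that the extension is conservative: if $\tau_1\wedge\tau_2\vdash_L\chi(X_0,X_1)$, I rewrite this as $\tau_2(X_0,X_2)\vdash_L \tau_1(X_0,X_1)\to\chi(X_0,X_1)$ and invoke Craig's Theorem on this implication between formulas sharing only the variables in $X_0$; this yields an interpolant $\sigma(X_0)$ with $\tau_2\vdash_L\sigma$ and $\sigma\vdash_L\tau_1\to\chi$. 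Conservativity of $A\to C$ then delivers $\tau_0\vdash_L\sigma$, and since $\tau_1\vdash_L\tau_0$, I conclude $\tau_1\vdash_L\chi$. The monomorphism $C\to D$ is treated symmetrically.

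For (AP) $\Rightarrow$ Craig, assume $\varphi(X_0,X_1)\vdash_L\psi(X_0,X_2)$ and set $\tau_1 \coloneqq \bigwedge\{\theta\in\Formulas(X_0) \mid \varphi\vdash_L\theta\}$ and $\tau_2 \coloneqq \bigwedge\{\theta\in\Formulas(X_0) \mid \neg\psi\vdash_L\theta\}$, both of which lie in $\Formulas(X_0)$ for sufficiently large $\kappa$. A preliminary observation is that an interpolant for $\varphi\vdash_L\psi$ exists iff $\tau_1\wedge\tau_2\vdash_L\bot$: if the conjunction is inconsistent, then $\neg\tau_2$ (which, by \cref{prop:eq}(i), is equivalent to a disjunction of formulas each entailing $\psi$) serves as interpolant, since $\varphi\vdash_L\tau_1\vdash_L\neg\tau_2\vdash_L\psi$; conversely, an interpolant $\sigma$ yields $\tau_1\vdash_L\sigma$ and $\tau_2\vdash_L\neg\sigma$, making $\tau_1\wedge\tau_2$ inconsistent. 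So I argue by contradiction: assume no interpolant, whence $\tau_1\wedge\tau_2$ is consistent. I then form the profinite $L$-algebras $A = \LT_L(X_0,\tau_1\wedge\tau_2)$, $B' = \LT_L(X_0+X_1,\varphi\wedge\tau_2)$, $C' = \LT_L(X_0+X_2,\neg\psi\wedge\tau_1)$ and check that the natural maps $A\to B'$ and $A\to C'$ are conservative: for $\theta\in\Formulas(X_0)$, $\varphi\wedge\tau_2\vdash_L\theta$ is equivalent to $\varphi\vdash_L\tau_2\to\theta$, hence by definition of $\tau_1$ to $\tau_1\vdash_L\tau_2\to\theta$, i.e.\ to $\tau_1\wedge\tau_2\vdash_L\theta$, and symmetrically for $A\to C'$. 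Applying (AP) yields a profinite $L$-algebra $D$ with monos $B'\to D$ and $C'\to D$ making a commuting square. In $D$ the elements $[\varphi\wedge\tau_2]$ and $[\neg\psi\wedge\tau_1]$ are both $\top$, which forces $[\varphi] = [\neg\psi] = \top$; combining this with soundness of $\varphi\vdash_L\psi$ in the $L$-algebra $D$ gives $\top = \bot$, so $D$ is trivial. Consequently $B'$ (mono into a trivial algebra) is trivial, so $\varphi\wedge\tau_2\vdash_L\bot$, and by the conservativity just verified $\tau_1\wedge\tau_2\vdash_L\bot$, contradicting consistency.

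The main obstacle is the (AP) $\Rightarrow$ Craig direction, specifically the choice of the base algebra $A$ and the extensions $B', C'$: using $\tau_1\wedge\tau_2$ as the base theory is precisely what makes both extensions conservative (and thus both arrows monic in $\Pro L\MA_\fin$), while enriching $\varphi$ with $\tau_2$ and $\neg\psi$ with $\tau_1$ is what lets the contradiction close inside the amalgam $D$.
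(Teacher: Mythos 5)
Your reduction of the statement to ``Craig $\iff$ (AP)'' via \cref{cor:regap} and your Craig $\Rightarrow$ (AP) direction follow the paper's route, but there is a technical slip there: from $\tau_1\wedge\tau_2\vdash_L\chi$ the Deduction \cref{prop:ded} yields $\tau_2\vdash_L \nec^*\tau_1\to\chi$, \emph{not} $\tau_2\vdash_L\tau_1\to\chi$ (the global consequence relation $\vdash_L$ does not satisfy the naive deduction theorem: $p\vdash_L\nec p$ but $\not\vdash_L p\to\nec p$). This is harmless --- apply Craig to $\nec^*\tau_1\to\chi$ instead and the rest of your argument (interpolant $\sigma$, conservativity of $A\to C$, cuts) goes through exactly as in the paper --- but as written the intermediate entailment is not derivable.

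The (AP) $\Rightarrow$ Craig direction contains a genuine gap. Your amalgamation-and-contradiction argument correctly establishes $\tau_1\wedge\tau_2\vdash_L\bot$, but the ``preliminary observation'' that this inconsistency yields an interpolant does not hold up, and it is precisely the direction you need. You claim $\neg\tau_2\vdash_L\psi$ because $\neg\tau_2\sim\bigvee\{\neg\theta \mid \neg\psi\vdash_L\theta\}$ and ``each disjunct entails $\psi$.'' Two things fail here, both traceable to $\vdash_L$ being the \emph{global} consequence relation. First, $\neg\psi\vdash_L\theta$ does not contrapose to $\neg\theta\vdash_L\psi$: it gives $\nec^*\neg\psi\leq\theta$, hence only $\neg\theta\leq\neg\nec^*\neg\psi=\bigvee_n\pos^n\psi$, which is strictly weaker than $\nec^*\neg\theta\leq\psi$ (again, $p\vdash_L\nec p$ does not give $\pos\neg p\vdash_L\neg p$). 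Second, even if each disjunct globally entailed $\psi$, a disjunction of such formulas need not: $\nec^*\bigvee S\not\leq\bigvee\{\nec^* s\mid s\in S\}$ in general. The same obstruction blocks the alternative candidate $\tau_1$: from $\tau_1\wedge\tau_2\leq\bot$ and $\nec^*\neg\psi\leq\tau_2$ you only get $\nec^*\tau_1\leq\neg\nec^*\neg\psi$, not $\nec^*\tau_1\leq\psi$. So at the end of your argument you have inconsistency of $\tau_1\wedge\tau_2$ but no interpolant. The paper avoids the consistency detour entirely: it takes $\sigma\coloneqq\bigwedge\{\rho\in\Norm(X_0)\mid\varphi\vdash_L\rho\}$ (your $\tau_1$), notes that $\LT_L(X_0,\sigma)\to\LT_L(X_0+X_1,\varphi)$ is conservative by construction, pushes out along $\LT_L(X_0,\sigma)\to\LT_L(X_0+X_2,\sigma)$ to get (by (AP)) a conservative map $\LT_L(X_0+X_2,\sigma)\to\LT_L(X_0+X_1+X_2,\varphi)$, and then reads off $\sigma\vdash_L\psi$ directly from $\varphi\vdash_L\psi$ and conservativity --- no contraposition and no disjunction elimination. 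You should replace your preliminary observation with this direct conservativity argument; your Robinson-style joint-consistency construction, while elegant, proves a statement that does not suffice here.
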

\begin{proof}
We prove that Craig's Theorem holds if and only if injective morphisms of profinite $L$-algebras (dual to epimorphisms in $L\KFr_\lf$) are stable under pushouts in $\Pro L\MA_\fin$ (see \cref{cor:regap}).

Assume Craig's Theorem. Consider a diagram in $\Pro L\MA_\fin$
\[\begin{tikzcd}[ampersand replacement=\&]
	{M_2} \\
	{M_0} \& {M_1}
	\arrow[from=2-1, to=1-1]
	\arrow[from=2-1, to=2-2]
\end{tikzcd}\]
with $M_0 \longrightarrow M_1$ injective; it can be rewritten in the following form
\[\begin{tikzcd}[ampersand replacement=\&]
	{\LT_L(X_0 + X_2,\tau_2)} \\
	{\LT_L(X_0,\tau_0)} \& {\LT_L(X_0 + X_1,\tau_1)}
	\arrow[from=2-1, to=1-1]
	\arrow[from=2-1, to=2-2]
\end{tikzcd}\]
Its pushout in $\Pro L\MA_\fin$ is
\[\begin{tikzcd}[ampersand replacement=\&]
	{\LT_L(X_0 + X_2,\tau_2)} \& {\LT_L(X_0 + X_1 + X_2,\tau_1 \wedge \tau_2)} \\
	{\LT_L(X_0,\tau_0)} \& {\LT_L(X_0 + X_1,\tau_1)}
	\arrow[from=1-1, to=1-2]
	\arrow[from=2-1, to=1-1]
	\arrow[from=2-1, to=2-2]
	\arrow[from=2-2, to=1-2]
\end{tikzcd}\]
$\LT_L(X_0 + X_2,\tau_2) \longrightarrow \LT_L(X_0 + X_1 + X_2,\tau_1 \wedge \tau_2)$ is injective if and only if the corresponding extension is conservative, i.e. if and only if
$$\tau_1(X_0,X_1)\wedge \tau_2(X_0,X_2) \vdash_L \varphi ~~~~~\implies~~~~~ \tau_2(X_0,X_2) \vdash_L \varphi$$
for each $\varphi \in \Formulas(X_0+X_2)$.
Consider $\varphi \in \Formulas(X_0+X_2)$ satisfying the premise of the condition above; using  \cref{prop:ded} and $\nec^* (\tau_1 \wedge \tau_2) \sim \nec^* \tau_1 \wedge \nec^* \tau_2$, we have that
$$\tau_1(X_0,X_1) \vdash_L \nec^* \tau_2 \rightarrow \varphi.$$
Applying Craig's Theorem to $\tau_1 \in \Formulas(X_0+X_1)$ and $\nec^* \tau_2 \rightarrow \varphi \in \Formulas(X_0+X_2)$, we can find $\sigma \in \Formulas(X_0)$ such that
$$\tau_1(X_0,X_1) \vdash_L \sigma(X_0) ~~~~~\text{and}~~~~~ \sigma(X_0) \vdash_L \nec^* \tau_2(X_0,X_2) \rightarrow \varphi(X_0,X_2).$$
From the first entailment, using the fact that 
$\LT_L(X_0,\tau_0) \longrightarrow \LT_L(X_0 + X_2,\tau_1)$
is injective,
%$(X_0, \tau_0) \subseteq (X_0+X_1, \tau_1)$ is conservative (by injectivity of the corresponding morphism), 
we get $\tau_0(X_0) \vdash_L \sigma$; moreover, $\tau_2(X_0,X_2)\vdash_L \tau_0$.
Applying suitable cuts, we finally obtain
\begin{comment}
 composing $(X_0+X_2,\tau_2) \vdash_L \tau_0$, $(X_0,\tau_0) \vdash_L \sigma$ and $(X_0+X_2,\sigma) \vdash_L \nec^* \tau_2 \rightarrow \varphi$, together with
\begin{align*}
\prftree[r]{\scriptsize Cut}
{\prftree[noline]
%{\prffancysummarybox}
{\Rightarrow \nec^* \tau_2 \rightarrow \varphi}}
{\prftree[r]{\scriptsize L\text{$\rightarrow$}}
{\prftree[r]{\scriptsize Nec}
{\prftree[r]{\scriptsize Nec}
{\prftree[r]{\scriptsize \text{$\tau_2$}-Ax}
{\Rightarrow \tau_2}}
{\vdots}}
{\Rightarrow \nec^* \tau_2}}
{\prftree[r]{\scriptsize Ax}
{\varphi \Rightarrow \varphi}}
{\nec^* \tau_2 \rightarrow \varphi \Rightarrow \varphi}}
{\Rightarrow \varphi}
\end{align*}
we obtain 
\end{comment}
$\tau_2(X_0,X_2) \vdash_L \varphi$.

Vice versa, assume amalgamation property for $\Pro L\MA_\fin$, and consider $\varphi \in \Formulas(X_0+X_1)$ and $\psi \in \Formulas(X_0+X_2)$ such that
$$\varphi(X_0,X_1) \vdash_L \psi(X_0,X_2).$$
Define
$$\sigma \coloneqq \bigwedge \{\rho \in \Norm(X_0)\ \vert\ \varphi(X_0,X_1) \vdash_L \rho(X_0)\} \in \Formulas(X_0)$$
We prove that $\sigma$ is an interpolant. Obviously, $\varphi(X_0,X_1) \vdash_L \sigma$, hence the following
\[\begin{tikzcd}[ampersand replacement=\&]
	{\LT_L(X_0 + X_2,\sigma)} \& {\LT_L(X_0 + X_1 + X_2,\varphi)} \\
	{\LT_L(X_0,\sigma)} \& {\LT_L(X_0 + X_1,\varphi)}
	\arrow[from=1-1, to=1-2]
	\arrow[from=2-1, to=1-1]
	\arrow[from=2-1, to=2-2]
	\arrow[from=2-2, to=1-2]
\end{tikzcd}\]
is a pushout diagram in $\Pro L\MA_\fin$. $\LT_L(X_0,\sigma) \longrightarrow \LT_L(X_0 + X_1,\varphi)$ is injective (the corresponding extension is conservative, by the definition of $\sigma$). By amalgamation property, the morphism $\LT(X_0 + X_2,\sigma) \longrightarrow \LT(X_0 + X_1 + X_2,\varphi)$ is conservative, too. In particular, we get $\sigma(X_0) \vdash_L \psi(X_0,X_2)$.
\end{proof}

\begin{rem}
The above proof is  more simple than the proof of the corresponding result in the finitary fragment of the language,
%~\cite{Mak},
because  in our infinitary language there is no difference between interpolants and \emph{uniform} interpolants and so we directly looked for a uniform interpolant in the second part of the proof.
\end{rem}

r-Regularity can be encountered in many cases. For example, r-regularity holds whenever $L\KFr_\lf$ can be characterized via \emph{universal Horn clauses} in the first order language containing the binary predicate $\prec$ and equality.
%A Horn clause is a condition of the form
%\begin{align*}%\label{eq:Horn}
%    \text{if } x_1 \prec y_1 ~\&~ \dots ~\&~ x_n \prec y_n, ~~~~~\text{then } R(x,y)%\tag{H}
%\end{align*}
%where $R$ is either the relation $\prec$ or the equality $=$. 
Under such Horn's definability assumption,  the dual of the amalgamation property (\cref{cor:regap}) is checked as follows. Consider a pair of p-morphisms
\[\begin{tikzcd}[ampersand replacement=\&]
	\& W_1 \\
	W_0 \& V
	\arrow["f_1", from=1-2, to=2-2]
	\arrow["f_0"', from=2-1, to=2-2]
\end{tikzcd}\]
in $L\KFr_\lf$. Define
$$U \coloneqq \{(w_0,w_1) \in W_0 \times W_1\ \vert\ f_0(w_0) = f_1(w_1)\}$$
and set, for $(w_0,w_1), (w'_0,w'_1) \in W$,
$$(w_0,w_1) \prec (w'_0,w'_1) ~~~~~\text{iff}~~~~~ w_0 \prec w'_0 ~\&~ w_1 \prec w'_1.$$
With such definitions, $W$ is a locally finite Kripke frame and the projection $\pi_i \colon U \longrightarrow W_i$ is a p-morphism, which is surjective if $f_i$ is so, for $i = 0, 1$. Moreover, thanks to the fact that $L\KFr_\lf$ is characterized by Horn clauses, we have that $W \in L\KFr_\lf$.
%sg july25 aggiunta frasetta
This proves amalgamation in $L\KFr_\lf$ (but notice that $W$ is usually not the pullback in $L\KFr_\lf$).

%silvio 8/11/25
%As we observed at the beginning of \cref{sec:prop}, 
%
If all the monomorphisms are regular (i.e. if the Weak Beth's Definability Property holds, recall~\cref{prop:wB}), then the notions of r-regular and of regular categories coincide:
this is because in such situation epis turns out to be extremal epis by \cref{prop:injsurj}.
This happens for instance whenever $L$ extends $K4$, by \cref{prop:regmono}). In such situations, Strong Beth Definability Property follows from Craig's Theorem:
\begin{thm}
 If the Weak Beth's Definability Property and the Craig's Theorem both hold for $\vdash_L$, then the Strong Beth's Definability Property holds, too.   
\end{thm}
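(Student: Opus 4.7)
The strategy is to apply Craig's Interpolation Theorem directly to the Strong Beth hypothesis. The Weak Beth assumption, though listed in the statement, does not appear to be needed in the syntactic derivation below; it can be viewed as the natural dual condition to what we are proving (Weak Beth says \emph{all epis are regular in} $\Pro L\MA_\fin$, Strong Beth says \emph{all monos are regular in} $\Pro L\MA_\fin$), rather than as an additional syntactic ingredient.

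Fix $\tau(X,Y)\in\Formulas(X+Y)$ and $\varphi(X,Y)\in\Formulas(X+Y)$ satisfying the Strong Beth antecedent
$$\tau(X,Y_0)\wedge\tau(X,Y_1) \vdash_L \varphi(X,Y_0)\leftrightarrow \varphi(X,Y_1).$$
First I would extract the left-to-right half and rearrange antecedent and succedent to get the sequent
$$\tau(X,Y_0)\wedge\varphi(X,Y_0) \vdash_L \tau(X,Y_1)\to \varphi(X,Y_1),$$
whose antecedent is a formula in $\Formulas(X+Y_0)$ and whose succedent is a formula in $\Formulas(X+Y_1)$; the two side-languages $Y_0,Y_1$ are disjoint and share only $X$.

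Next I would apply Craig's Theorem (\cref{thm:craig=amalg}) with common variables $X$ and disjoint side variables $Y_0,Y_1$, obtaining an interpolant $\psi(X)\in\Formulas(X)$ with
$$\tau(X,Y_0)\wedge\varphi(X,Y_0) \vdash_L \psi(X) \qquad\text{and}\qquad \psi(X) \vdash_L \tau(X,Y_1)\to \varphi(X,Y_1).$$
Since $\psi$ involves no variable from $Y_0$ or $Y_1$, I would then relabel $Y_0$ and $Y_1$ back to a single copy $Y$ — a step that is legitimate because the calculi $(X+Y_i,L,\emptyset)$ and $(X+Y,L,\emptyset)$ are canonically isomorphic via a bijection fixing $X$, so derivations transport — to obtain
$$\tau(X,Y)\vdash_L \varphi(X,Y)\to\psi(X), \qquad \tau(X,Y)\vdash_L \psi(X)\to\varphi(X,Y).$$
Their conjunction is precisely the Strong Beth conclusion $\tau(X,Y)\vdash_L \varphi(X,Y)\leftrightarrow\psi(X)$, with witness $\psi(X)\in\Formulas(X)$.

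There is no serious obstacle here: the only non-routine move is the invocation of Craig, and the final relabeling follows from the uniformity of the sequent calculus in its propositional variables. One could alternatively argue categorically, using that under Weak Beth the coregular factorization in $\Pro L\MA_\fin$ becomes an (epi, mono) factorization and that Craig provides its stability, and then attempt to show regular and coregular factorizations agree; but that route would have to handle the failure of the induced map from a coequalizer of kernel pairs to be monic in $L\KFr_\lf$ (witnessed by \cref{ex:wbeth}), whereas the syntactic route above sidesteps these subtleties entirely.
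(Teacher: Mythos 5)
There is a genuine gap here, and it is fatal. The relation $\vdash_L$ occurring in the Beth properties and in Craig's Theorem is the \emph{global} consequence relation, and your ``rearrangement'' of antecedent and succedent across it is not sound: by the Deduction Theorem (\cref{prop:ded}), discharging a theory axiom $\chi$ costs a $\nec^*$, so from $T\cup\{\chi\}\vdash_L\delta$ one obtains only $T\vdash_L\nec^*\chi\to\delta$, not $T\vdash_L\chi\to\delta$. Tracking this through your argument, the first half of the interpolation you extract, $\tau(X,Y_0)\wedge\varphi(X,Y_0)\vdash_L\psi(X)$, yields only
$$\tau(X,Y_0)\ \vdash_L\ \nec^*\varphi(X,Y_0)\to\psi(X),$$
which is strictly weaker than the required $\tau(X,Y_0)\vdash_L\varphi(X,Y_0)\to\psi(X)$ (since $\nec^*\varphi\leq\varphi$, the implication $\nec^*\varphi\to\psi$ is the \emph{weaker} one), and no choice of interpolant repairs this; the symmetric defect appears in the second half. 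Your derivation would be correct if you assumed the \emph{Local} Craig's Theorem (\cref{thm:craig}), but that is a strictly stronger hypothesis than the global Craig's Theorem assumed in the statement. That the gap cannot be closed --- and that your claim that Weak Beth is dispensable is false --- is witnessed by the remark immediately following the theorem: for the logic of \cref{ex:beth} the amalgamation property, equivalently Craig's Theorem by \cref{thm:craig=amalg}, holds, yet both Beth properties fail.

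The paper's proof is precisely the categorical route you set aside. Under Weak Beth all monomorphisms of $L\KFr_\lf$ are regular, hence injective (\cref{prop:wB} and \cref{prop:injsurj}), so the r-regularity supplied by Craig's Theorem via \cref{thm:craig=amalg} becomes genuine regularity; Joyal's Theorem then identifies regular epis with extremal epis, and every surjective $f$ is extremal because it cannot factor through a proper injective mono. The difficulty you feared about the comparison map out of the coequalizer of the kernel pair is exactly what the Weak Beth hypothesis is there to handle; it cannot be sidestepped syntactically with the global Craig's Theorem alone.
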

 
\begin{proof}
According to~\cref{thm:BethStrong1}, we have to show that an epi $f \colon W \longrightarrow V$ in $L\KFr_\lf$ is regular. But in a regular category, regular epis coincide with extremal epis, by Joyal's Theorem~\cite[Proposition 1.3.4]{elephant}, i.e. with those arrows that cannot factorize through a proper mono as a second component. In our situation, monos are precisely injective functions (see~\cref{prop:injsurj,prop:wB}), so surjectivity of $f$ (see~\cref{prop:injsurj}) is sufficient to ensure that $f$ is an extremal epi and hence a regular epi.
\end{proof}
 
Without the assumption  of Weak Beth Definability Property, however, \emph{the statement of the previous Theorem does not hold}: in fact, amalgamation property is easily seen to be true for the logic $L$ mentioned in~\cref{ex:beth}, where nevertheless Beth's Properties fail.

 \begin{comment}
 We can obtain the following characterizations for varieties of interior algebras.
\begin{prop}\label{prop:reg}
If $L$ extends $S4$, then $L\KFr_\lf$ is regular if and only if it is one of the following:
\begin{enumerate}
    \item[{\rm (I)}] $\{W \in \Pos_\lf\ \vert\ \width(W) \leq 1\}$ ($L\MA$ is the variety of $S4.3.\textit{Grz}$-algebras);
    \item[{\rm (II)}] $\{W \in \PreO_\lf\ \vert\ \height(W) \leq l ~\&~ \width(W) \leq v ~\&~ \internal(W) \leq m ~\&~ \external(W) \leq n\}$, for some $l \in \{0, 1, 2\}$ and some $v, m, n \in \{1, 2, \omega\}$;
    \item[{\rm (III)}] $\{W \in \PreO_\lf\ \vert\ \internal(W) \leq m ~\&~ \external(W) \leq n ~\&~ \confluent(W) \leq s\}$, for some $m, n \in \{1, 2, \omega\}$ and some $s \in \{1, \omega\}$.
\end{enumerate}
\end{prop}
\begin{proof}
See \cite{Copr}.
\end{proof}
\end{comment}

\subsection{Local Interpolation}

So far, we only considered \emph{global} consequence relation (i.e. deduction under hypotheses). Here we show an interesting characterization concerning \emph{local} consequence relation.

 Let $\varphi \in \Formulas(X_0+X_1)$ and $\psi \in \Formulas(X_0+X_2)$.  \emph{Local Craig's Theorem} says that whenever
$$\vdash_L \varphi(X_0,X_1)\to  \psi(X_0,X_2)$$
we have that there exists $\sigma \in \Formulas(X_0)$ such that
$$\vdash_L\varphi(X_0,X_1) \to \sigma(X_0) ~~~~~\text{and}~~~~~ \vdash_L \sigma(X_0) \to \psi(X_0,X_2).$$
The formula $\sigma$ is called a \emph{local interpolant} for $\varphi$ and $\psi$.

It is clear that Local Craig's Theorem imply its global version, namely Craig's Theorem (as stated in the previous Section), because of the Deduction~\cref{prop:ded}. A standard algebraic characterization of Local Craig's Theorem for modal logics is obtained by the notion of \emph{superamalgamation}.

\begin{defn}
We say that $\Pro L\MA_\fin$ has the \emph{superamalgamation property} (SAP) if, for any monomorphisms $k \colon A \longrightarrow B$, $g \colon A \longrightarrow C$, there are monomorphisms $h \colon C \longrightarrow D$, $f \colon B \longrightarrow D$ such that $f \circ k = h \circ g$, i.e.\ such that the square
\[\begin{tikzcd}
	A & B \\
	C & D
	\arrow["k", from=1-1, to=1-2]
	\arrow["g"', from=1-1, to=2-1]
	\arrow["f", from=1-2, to=2-2]
	\arrow["h"', from=2-1, to=2-2]
\end{tikzcd}\]
commutes and has the superamalgamation property. The latter means that for every $c\in C, b\in B$, if $h(c) \leq f(b)$, then there is $a\in A$ such that $c\leq g(a)$ and $k(a)\leq b$.
\end{defn}

The following fact can be established reasoning as in the proof of Theorem~\ref{thm:craig=amalg}

\begin{prop}
Local version of Craig's Theorem holds for $\vdash_L$ if and only if $\Pro L\MA_\fin$ has the superamalgamation property.
\end{prop}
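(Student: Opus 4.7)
The plan is to mimic the two directions of Theorem~\ref{thm:craig=amalg}, inserting the order information coming from the ``super-'' condition of SAP and feeding it through the Deduction Theorem. Both directions will route through the pushout
\[
D := \LT_L(X_0+X_1+X_2,\tau_1\wedge\tau_2)
\]
of canonical maps from $\LT_L(X_0,\tau_0)$ to $\LT_L(X_0+X_i,\tau_i)$ for $i=1,2$, exactly as in that earlier proof; the new ingredient is the order relation between elements in $D$.

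For the direction SAP $\Rightarrow$ Local Craig's Theorem, I would assume $\vdash_L\varphi(X_0,X_1)\to\psi(X_0,X_2)$ and apply SAP to the injective maps $\LT_L(X_0,\top)\to\LT_L(X_0+X_i,\top)$ ($i=1,2$), producing monomorphisms $k,g,f,h$ into a modal algebra $D'$. In $D'$ the inequality $[\varphi]\leq[\psi]$ still holds (via the canonical map from the pushout into the cocone $D'$), which under negation becomes $h([\neg\psi])\leq f([\neg\varphi])$. The superamalgamation clause then produces $a\in\LT_L(X_0,\top)$ with $[\neg\psi]\leq g(a)$ and $k(a)\leq[\neg\varphi]$; unwinding, any representative $\sigma':=\neg a$ of its negation satisfies $\vdash_L\varphi\to\sigma'$ and $\vdash_L\sigma'\to\psi$, and is therefore a local interpolant.

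For the converse, I would present arbitrary monomorphisms $k:A\to B$, $g:A\to C$ as conservative extensions with $A=\LT_L(X_0,\tau_0)$, $B=\LT_L(X_0+X_1,\tau_1)$, $C=\LT_L(X_0+X_2,\tau_2)$ (so $D$ is their pushout). Global Craig (hence amalgamation, by Theorem~\ref{thm:craig=amalg}) follows from Local Craig via the Deduction Theorem, so the pushout maps $f,h$ are monomorphisms. For superamalgamation, suppose $h([\psi]_C)\leq f([\varphi]_B)$ in $D$; this means $(X_0+X_1+X_2,\tau_1\wedge\tau_2)\vdash_L\psi\to\varphi$, which by Deduction Theorem and the distribution $\nec^*(\tau_1\wedge\tau_2)\sim\nec^*\tau_1\wedge\nec^*\tau_2$ rearranges to
\[
\vdash_L (\nec^*\tau_2\wedge\psi)\to(\nec^*\tau_1\to\varphi).
\]
Local Craig, applied with antecedent in $\Formulas(X_0+X_2)$ and consequent in $\Formulas(X_0+X_1)$, yields $\sigma\in\Formulas(X_0)$ with $\vdash_L(\nec^*\tau_2\wedge\psi)\to\sigma$ and $\vdash_L\sigma\to(\nec^*\tau_1\to\varphi)$. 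Deduction Theorem then gives $\tau_2\vdash_L\psi\to\sigma$ and $\tau_1\vdash_L\sigma\to\varphi$, i.e.\ $[\psi]_C\leq g([\sigma]_A)$ and $k([\sigma]_A)\leq[\varphi]_B$, so $a=[\sigma]_A$ witnesses SAP.

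The main obstacle I expect is purely bookkeeping: keeping clear the distinction between the local consequence $\vdash_L\alpha\to\beta$ and the global consequence $\alpha\vdash_L\beta$, and applying the Deduction Theorem at the right points to convert between them (this is exactly where the $\nec^*\tau_i$ factors appear and must then be discharged). Once the interplay between $\to$, $\vdash_L$, and $\nec^*$ is handled cleanly, both directions are essentially order-theoretic refinements of the arguments already given for the amalgamation/Craig correspondence.
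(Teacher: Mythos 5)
Your proof is correct and follows exactly the route the paper intends: its own proof of this proposition is just the remark that one reasons as in Theorem~\ref{thm:craig=amalg}, and your argument is precisely that adaptation, threading the order-theoretic ``super'' condition through the pushout presentation $\LT_L(X_0+X_1+X_2,\tau_1\wedge\tau_2)$ and the Deduction Theorem in both directions. The auxiliary facts you rely on---that monomorphisms in $\Pro L\MA_\fin$ are the injective maps (so the conservative-extension presentations apply) and that global Craig makes the pushout legs injective---are already established in the paper, so there is no gap.
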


The following result gives a categorical characterization of the Local Craig's Theorem involving the forgetful functor and weak pullbacks:

\begin{thm}\label{thm:craig}
Local Craig's Theorem holds for $\vdash_L$ if and only if the forgetful functor $L\KFr_\lf \longrightarrow \Set$ preserves weak pullbacks.
\end{thm}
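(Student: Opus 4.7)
The plan is to pass through the previous proposition, which identifies Local Craig's Theorem with the Superamalgamation Property (SAP) of $\Pro L\MA_\fin$, and then translate SAP across Goldblatt--Thomason duality into a statement about the forgetful functor $U \colon L\KFr_\lf \longrightarrow \Set$. So the task reduces to showing that SAP holds if and only if $U$ preserves weak pullbacks.

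First I would compute the dual form of SAP. A pair of injective morphisms $k \colon M_0 \to M_1$, $g \colon M_0 \to M_2$ in $\Pro L\MA_\fin$ corresponds to a pair of surjective p-morphisms $\tau_0 \colon W_1 \to W_0$, $\tau_1 \colon W_2 \to W_0$ in $L\KFr_\lf$, and their pushout $M_3$ in $\Pro L\MA_\fin$ is dual to the pullback $W_3$ in $L\KFr_\lf$ with projections $\pi_0 \colon W_3 \to W_1$, $\pi_1 \colon W_3 \to W_2$. Writing $M_i = \mathcal{P}(W_i)$, the algebraic maps become set-theoretic inverse images and $\leq$ becomes $\subseteq$. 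A short computation---instantiate the superamalgamation inequality at $c = \{w_2\}$ and $b = W_1 \setminus \{w_1\}$, and take the contrapositive---shows that SAP is equivalent to the following lifting condition: for every $(w_1, w_2) \in W_1 \times W_2$ with $\tau_0(w_1) = \tau_1(w_2)$, there exists $w_3 \in W_3$ with $\pi_0(w_3) = w_1$ and $\pi_1(w_3) = w_2$. The injectivity clause in SAP, which corresponds to surjectivity of $\pi_0, \pi_1$, is automatic from this lifting condition together with surjectivity of $\tau_0, \tau_1$. In other words, SAP is exactly the statement that $U$ sends the pullback in $L\KFr_\lf$ of a cospan of \emph{surjective} p-morphisms to a weak pullback in $\Set$.

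The final step bridges from the surjective case to arbitrary cospans. Given any $f_0 \colon W_0 \to V$, $f_1 \colon W_1 \to V$ in $L\KFr_\lf$, I would factor each $f_i$ via its image. A regular monomorphism in $L\KFr_\lf$ is an injective p-morphism, and openness forces its image to be closed under $\prec$-successors, hence a generated subframe of $V$. Thus $U' \coloneqq \Image f_0 \cap \Image f_1$ is a generated subframe of $V$, and each $W_i' \coloneqq f_i^{-1}(U')$ is a generated subframe of $W_i$ (stability of $f_i$ transports $\prec$-successors), so all these objects lie in $L\KFr_\lf$. The restrictions $\tilde f_i \colon W_i' \to U'$ are surjective, and any cone on $f_0, f_1$ factors uniquely through the cospan of the $\tilde f_i$ because its shared value in $V$ necessarily lands in $\Image f_0 \cap \Image f_1$. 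Consequently the pullback of $f_0, f_1$ in $L\KFr_\lf$ coincides with the pullback of $\tilde f_0, \tilde f_1$, and the set-theoretic fibre products $W_0 \times_V W_1$ and $W_0' \times_{U'} W_1'$ coincide as well. The surjective case already established then delivers the general weak pullback preservation; the converse direction is immediate, since the surjective case is a special case of weak pullback preservation.

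The main delicate point is the dualization step in the second paragraph: one must verify carefully that instantiating $c$ at a singleton and $b$ at the complement of a point really captures the full content of the SAP inequality, and that no higher-dimensional data is lost in reducing to this basic instance. Once that elementary set-theoretic verification is in hand, the rest is routine bookkeeping with the coregular factorization and with generated subframes.
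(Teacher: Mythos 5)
Your proposal is correct and follows essentially the same route as the paper: reduce to the superamalgamation property via the preceding proposition, dualize across Goldblatt--Thomason so that the algebra maps become inverse images on powersets, and identify the superamalgamation inequality with the weak-pullback lifting condition by testing on singletons --- the paper phrases this last step via the adjoint calculus of $(f^*)^{-1}$ and $\exists_{f^*}$, you via explicit instantiation at $c=\{w_2\}$ and $b=W_1\setminus\{w_1\}$. The ``delicate point'' you flag does go through (the witness $a=\tau_1(c)$, i.e.\ the paper's $\exists_{g^*}(c)$, upgrades the singleton instances to the full inequality since both sides commute with unions in $c$), and your explicit image-factorization bridge from surjective cospans to arbitrary ones supplies the detail the paper compresses into ``use the universal properties'' for weak pushouts.
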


\begin{proof}
    First notice that $L$ has (SAP) iff all pushouts in $\Pro L\MA_\fin$ have the superamalgamation property iff all weak pushouts in $\Pro L\MA_\fin$ have the superamalgamation property (use the universal properties to establish these equivalences). Thus it is sufficient to show that a commutative square of profinite algebras 
    \[\begin{tikzcd}
	A & B \\
	C & D
	\arrow["k", from=1-1, to=1-2]
	\arrow["g"', from=1-1, to=2-1]
	\arrow["f", from=1-2, to=2-2]
	\arrow["h"', from=2-1, to=2-2]
    \end{tikzcd}\]
    has the superamalgamation property iff the dual square 
    \[\begin{tikzcd}
	A^* & B^* \\
	C^* & D^*
	\arrow["k^*", from=1-2, to=1-1]
	\arrow["g^*"', from=2-1, to=1-1]
	\arrow["f^*", from=2-2, to=1-2]
	\arrow["h^*"', from=2-2, to=2-1]
    \end{tikzcd}\]
     of locally finite Kripke frames becomes a weak pullback once the forgetful functor is applied to it.
     This is easily establishes as follows. The (SAP) property for the dualized square means that, for all $c\subseteq C^*, b\subseteq B^*$, we have
     $$
     (h^*)^{-1}(c) \subseteq (f^*)^{-1}(b)~~\Rightarrow~~
     \exists a\subseteq A^*~(c\subseteq  (g^*)^{-1}(a)~\&~
      (k^*)^{-1}(a)\subseteq b)~~.
     $$
     Playing with the fact that inverse image has both a left and a right adjoint, this is the same as asking that, for all $c\subseteq C^*, b\subseteq B^*$, we have 
     $$
     \exists_{f^*}(h^*)^{-1}(c)\subseteq b 
     ~~\Rightarrow~~(k^*)^{-1}\exists_{g^*}(c)\subseteq b 
     $$
     (here $\exists_{f^*}$ and $\exists_{g^*}$ are the left adjoints). Since the latter has to hold for all $b$, it reduces to requiring 
     $$
     (k^*)^{-1}\exists_{g^*}(c) \subseteq  \exists_{f^*}(h^*)^{-1}(c)
     $$
     for all $c\subseteq C^*$. In turn, such condition is verified iff it is verified when $c$ is a singleton, i.e. iff
     $$
     \forall x\in C^*\, \forall y\in B^*\,(k^*(y)=g^*(x) 
     ~\Rightarrow~\exists z\in D^* (h^*(z)=x ~\&~f^*(z)=y))
     $$
     The latter  is precisely the condition for a commutative square in $\Set$ to be a weak pullback.
\end{proof}

\subsection{r-Exactness}
Another interesting characterization theorem can be established by taking into consideration the notion of Barr r-exactness (this is the notion of Barr exactness, modified for categories which are not regular, but r-regular --- see below). 

If $\mathsf{C}$ is a category with finite limits, then for every object $A$ we can take all regular monomorphisms of codomain $A$; if we consider two regular monomorphisms equal in case they differ by an $A$-isomorphism, we do get, in fact, a semilattice $\Sub_r(A)$, whose elements are called \emph{regular subobjects} of $A$. Given an arrow $f \colon B \longrightarrow A$, taking pullback induces a semilattice morphism $f^* \colon \Sub_r(A) \longrightarrow \Sub_r(B)$. This association is functorial. 

Take now a morphism $f \colon A \longrightarrow B$ in $\mathsf{C}$ and perform its kernel pair
\[\begin{tikzcd}[ampersand replacement=\&]
	K \& A \\
	A \& B
	\arrow["{k_0}", from=1-1, to=1-2]
	\arrow["{k_1}"', from=1-1, to=2-1]
	\arrow["f", from=1-2, to=2-2]
	\arrow["f"', from=2-1, to=2-2]
\end{tikzcd}\]
Then, the (equivalence class of the) induced $\langle k_0, k_1 \rangle \colon K \longrightarrow A \times A$ is a regular subobject of $A \times A$. In addition, $\langle k_0, k_1 \rangle$ satisfies the following properties (see~\cite[Definition 1.3.6]{elephant}):
\begin{enumerate}
    \item \emph{Reflexivity}. there exists $r \colon A \longrightarrow K$ such that $k_i r = \id_A$ for $i = 0, 1$;
    \item \emph{Symmetry}. there exists $s \colon K \longrightarrow K$ such that $k_i s = k_{1-i}$ for $i = 0, 1$;
    \item \emph{Transitivity}. if the square
    \[\begin{tikzcd}[ampersand replacement=\&]
	P \& K \\
	K \& A
	\arrow["{p_1}", from=1-1, to=1-2]
	\arrow["{p_0}"', from=1-1, to=2-1]
	\arrow["{k_0}", from=1-2, to=2-2]
	\arrow["{k_1}"', from=2-1, to=2-2]
    \end{tikzcd}\]
    is a pullback, then there exists $p \colon P \longrightarrow K$ such that $k_i p = k_i p_i$.
\end{enumerate}
The above properties can be synthetically reformulated by saying that $\langle k_0, k_1 \rangle$ is an internal r-equivalence relation in $\mathsf{C}$.

\begin{defn}
An \emph{r-equivalence relation} in $\mathsf{C}$ over the object $A$ is a regular subobject $\langle k_0, k_1 \rangle \colon K \longrightarrow A \times A$ satisfying the Reflexivity, Symmetry and Transitivity conditions above; r-equivalence relations of the form \say{kernel pair of some morphism} are called \emph{effective}.
\end{defn}

\begin{defn}
A category $\mathsf{C}$ is said to be \emph{Barr r-exact} if it is r-regular and all r-equivalence relations are effective.
\end{defn}

Consider the dual notions in $\Pro L\MA_\fin$. Take a presentation $\LT_L(X,\tau)$ of a profinite $L$-algebra $M$; then the coproduct $M + M$ can be presented as $\LT_L(X_0+X_1,\tau(X_0) \wedge \tau(X_1))$, where $X_i$ is a disjoint copy of $X$ and $\tau(X_i)$ denotes the formula obtained by substituting the variables $X_i$ into the variables $X$ in $\tau$. A regular epimorphism in $\Pro L\MA_\fin$, having $M + M$ as domain can then be presented as
\[\begin{tikzcd}[ampersand replacement=\&]
	{\LT_L(X_0+X_1,\tau(X_0) \wedge \tau(X_1))} \& {\LT_L(X_0+X_1,\rho)}
	\arrow[from=1-1, to=1-2]
\end{tikzcd}\]
(regular monomorphisms in $L\KFr_\lf$ are injective p-morphisms, recall~\cref{prop:injsurj}). In other words, to give a regular epimorphism in $\Pro L\MA_\fin$ having $\LT_L(X,\tau) + \LT_L(X,\tau)$ as domain, it is sufficient to give a formula $\rho = \rho(X_0,X_1)$ such that
\[\rho(X_0,X_1) \vdash_L \tau(X_0) \wedge \tau(X_1)\tag{0}\]

We can prove the following.
\begin{prop}\label{prop:coeq}
The regular epimorphism
\[\begin{tikzcd}[ampersand replacement=\&]
	{\LT_L(X_0+X_1,\tau(X_0) \wedge \tau(X_1))} \& {\LT_L(X_0+X_1,\rho)}
	\arrow[from=1-1, to=1-2]
\end{tikzcd}\]
satisfies the dual of
\begin{enumerate}
    \item[\rm (i)] Reflexivity if and only if
    \[\tau(X)\vdash_L \rho(X,X);\tag{1}\]
    \item[\rm (ii)] Symmetry if and only if
    \[\rho(X_0,X_1) \vdash_L \rho(X_1,X_0);\tag{2}\]
    \item[\rm (iii)] Transitivity if and only if
    \[\rho(X_0,X_1) \wedge \rho(X_1,X_2) \vdash_L \rho(X_0,X_2).\tag{3}\]
\end{enumerate}
\end{prop}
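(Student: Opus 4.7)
The plan is to dualize each of the three conditions from $L\KFr_\lf$ into the existence of a specific morphism in $\Pro L\MA_\fin$, and then invoke \cref{prop:univ} to reduce each such existence to the corresponding provability statement. Throughout, $M = \LT_L(X,\tau)$ and the canonical maps $q_0,q_1 \colon M \to N := \LT_L(X_0+X_1,\rho)$ (duals of the regular-mono components $k_0,k_1$) are the morphisms induced by the substitutions $X \mapsto X_0$ and $X \mapsto X_1$, respectively. In each case the map we are looking for is forced on generators by the dual equations, so the only content is whether the forcing substitution extends to a well-defined morphism of profinite $L$-algebras, which by \cref{prop:univ} amounts to checking that the image of the defining formula $\rho$ of the codomain evaluates to $\top$.

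Cases (i) and (ii) are the direct ones. Dual Reflexivity asks for $r \colon N \to M$ with $r \circ q_i = \id_M$; these equations force $r$ to be induced by the ``diagonal'' valuation $X_0,X_1 \mapsto X$, so by \cref{prop:univ} such $r$ exists iff $\rho(X,X)$ evaluates to $\top$ in $M$, i.e.\ iff $\tau(X) \vdash_L \rho(X,X)$. Dual Symmetry asks for $s \colon N \to N$ with $s \circ q_i = q_{1-i}$, which forces $s$ to be induced by the swap $X_0 \leftrightarrow X_1$; this extends to a morphism iff the swapped formula $\rho(X_1,X_0)$ is $\top$ in $N$, equivalently iff $\rho(X_0,X_1) \vdash_L \rho(X_1,X_0)$.

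Case (iii) is the technical heart, and the main obstacle is identifying the dual of the pullback $P$. Computing the pushout $P^{op}$ of $q_0$ and $q_1$ in $\Pro L\MA_\fin$ (by taking two copies of $N$ and identifying the $q_0$-image in one with the $q_1$-image in the other, following the colimit description at the end of the introductory part of \cref{sec:prop}), one finds the presentation
$$P^{op} \;\simeq\; \LT_L\bigl(X_0+X_1+X_2,\; \rho(X_0,X_1) \wedge \rho(X_1,X_2)\bigr),$$
with the two pushout inclusions $\iota_0,\iota_1 \colon N \to P^{op}$ realized by the substitutions $(X_0,X_1)\mapsto(X_0,X_1)$ and $(X_0,X_1)\mapsto(X_1,X_2)$ respectively (note that then $\iota_0 \circ q_1 = \iota_1 \circ q_0$, as required). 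Dualizing the Transitivity condition then demands a morphism $t \colon N \to P^{op}$ satisfying $t \circ q_0 = \iota_0 \circ q_0$ and $t \circ q_1 = \iota_1 \circ q_1$, which pins $t$ down on generators as the substitution $(X_0,X_1)\mapsto(X_0,X_2)$. By \cref{prop:univ} once more, such $t$ exists iff $\rho(X_0,X_2)$ equals $\top$ in $P^{op}$, equivalently iff $\rho(X_0,X_1) \wedge \rho(X_1,X_2) \vdash_L \rho(X_0,X_2)$. Verifying the above presentation of $P^{op}$ together with the concrete shape of $\iota_0,\iota_1$ is the step that requires the most care.
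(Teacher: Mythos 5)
Your argument is correct, and it fills in a proof that the paper itself omits (the text only asserts ``We can prove the following'') using exactly the machinery the surrounding section sets up: present the relevant (co)limits as theory extensions, observe that the dualized (co)equations force the candidate morphism on generators, and reduce its existence to provability via \cref{prop:univ}. Your presentation of the pushout $P^{op}$ as $\LT_L(X_0+X_1+X_2,\rho(X_0,X_1)\wedge\rho(X_1,X_2))$ with the two injections $(X_0,X_1)\mapsto(X_0,X_1)$ and $(X_0,X_1)\mapsto(X_1,X_2)$ is the correct instance of the paper's general pushout description (after the variable relabelings identifying the shared copy of $X$), and it is indeed the only step requiring care, as you note.
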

In other words, an r-coequivalence relation in $\Pro L\MA_\fin$ over $\LT_L(X,\tau)$ is given by a formula $\rho = \rho(X_0,X_1)$ satisfying (0)-(3).

On the other hand, the cokernel pair  of any morphism
\[\begin{tikzcd}[ampersand replacement=\&]
	{\LT_L(Y,\sigma)} \& {\LT_L(X,\tau)}
	\arrow["h", from=1-1, to=1-2]
\end{tikzcd}\]
in $\Pro L\MA_\fin$ can be presented as
\[\begin{tikzcd}[ampersand replacement=\&]
	{\LT_L(X,\tau)} \& {\LT_L(X_0+X_1,\rho)}
	\arrow[shift left, from=1-1, to=1-2]
	\arrow[shift right, from=1-1, to=1-2]
\end{tikzcd}\]
with $\rho(X_0,X_1)$ having the form
\begin{align*}\label{eq:sv}
\tau(X_0) \wedge \tau(X_1) \wedge \bigwedge\{\varphi^y(X_0) \leftrightarrow \varphi^y(X_1)\ \vert\ y \in Y\}\tag{SV}
\end{align*}
where $\varphi^y$ is a formula in $\Formulas(X)$ such that $h([y]_{(Y,L,\sigma)}) = [\varphi^y]_{(X,L,\tau)}$.

\begin{defn}
We say that a formula $\rho(X_0,X_1)$ \emph{separates variables} $X_0$ \emph{and} $X_1$ \emph{relative to} $\tau(X)$ if it is equivalent to the formula of form \cref{eq:sv} for some formulas $\{\varphi^y\}_{y \in Y} \subseteq \Formulas(X)$.
\end{defn}

We can prove the following.
\begin{prop}
A formula $\rho(X_0,X_1)$ defines an effective r-coequivalence relation on $\LT_L(X,\tau)$ if and only if $\rho(X_0,X_1)$ separates variables $X_0$ and $X_1$ relative to $\tau(X)$.
\end{prop}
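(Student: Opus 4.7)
The plan is to exploit the explicit presentation of cokernel pairs recalled immediately before the statement, which already forces any cokernel pair to take the shape \eqref{eq:sv}. This will reduce the $(\Rightarrow)$ direction to a direct reading-off, leaving only the need to construct an appropriate witnessing morphism in the $(\Leftarrow)$ direction.

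For the ``only if'' direction, I would assume that the r-coequivalence relation on $\LT_L(X,\tau)$ presented by
\[\LT_L(X_0+X_1,\tau(X_0)\wedge\tau(X_1)) \longrightarrow \LT_L(X_0+X_1,\rho)\]
is effective, hence arises as the cokernel pair of some morphism $h\colon \LT_L(Y,\sigma)\longrightarrow \LT_L(X,\tau)$. Choosing, for each $y\in Y$, a formula $\varphi^y \in \Formulas(X)$ with $h([y]_{(Y,L,\sigma)}) = [\varphi^y]_{(X,L,\tau)}$, the cokernel pair computation recalled above the statement yields precisely the form \eqref{eq:sv} for $\rho$, showing that $\rho$ separates variables.

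For the ``if'' direction, I would start from a separation
\[\rho(X_0,X_1) \sim \tau(X_0)\wedge\tau(X_1)\wedge \bigwedge\{\varphi^y(X_0)\leftrightarrow \varphi^y(X_1)\ \vert\ y\in Y\}\]
with $\{\varphi^y\}_{y\in Y}\subseteq \Formulas(X)$, and manufacture the required witness morphism from the $\varphi^y$'s. Concretely, \cref{prop:univ} applied to the valuation $y\mapsto [\varphi^y]_{(X,L,\tau)}$ and the trivial theory produces a morphism $h\colon \LT_L(Y,\top)\longrightarrow \LT_L(X,\tau)$ satisfying $h([y]_{(Y,L,\top)}) = [\varphi^y]_{(X,L,\tau)}$. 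Applying once more the cokernel pair presentation to this $h$, I obtain a cokernel pair of the form $\LT_L(X,\tau)\rightrightarrows \LT_L(X_0+X_1,\rho')$ with $\rho'$ given by \eqref{eq:sv} on the same $\varphi^y$'s, hence $\rho'\sim\rho$. The r-coequivalence presented by $\rho$ therefore coincides with the cokernel pair of $h$ and is effective.

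The main (and rather mild) obstacle is the bookkeeping in the $(\Leftarrow)$ direction: I must make sure that the cokernel pair reconstructed from the witnesses $\{\varphi^y\}$ actually returns the original $\rho$ up to $\sim$, and not just some other separating formula on a different set of witnesses. This is handled by the fact that the cokernel pair formula depends only on the representatives $\varphi^y$ of $h([y])$, and $h$ was built precisely so that these representatives are exactly the $\varphi^y$'s chosen at the outset; one small point worth checking is that working with $\sigma=\top$ (instead of some stronger $\sigma$) causes no loss, which is immediate because the cokernel pair formula \eqref{eq:sv} makes no reference to $\sigma$.
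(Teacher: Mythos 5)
Your proposal is correct and follows essentially the same route as the paper: the ``only if'' direction is read off from the cokernel-pair presentation \eqref{eq:sv}, and the ``if'' direction builds the witnessing morphism $\LT_L(Y,\top)\longrightarrow\LT_L(X,\tau)$ from the valuation $y\mapsto[\varphi^y]_{(X,L,\tau)}$ (via \cref{prop:univ}) and checks that its cokernel pair is presented by $\rho$ up to $\sim$. The extra bookkeeping you flag (that the reconstructed \eqref{eq:sv} uses the same representatives $\varphi^y$ and that $\sigma=\top$ loses nothing) is exactly the point the paper silently relies on, so nothing is missing.
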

\begin{proof}
If $\rho(X_0,X_1)$ defines an effective coequivalence relation on $\LT_L(X,\tau)$ then, as we observed above, $\rho(X_0,X_1)$ separates variables. On the other hand, if there are formulas $\{\varphi^y\}_{y \in Y} \subseteq \Formulas(X)$ such that $\rho$ is (equivalent to)
$$\tau(X_0) \wedge \tau(X_1) \wedge \bigwedge\{\varphi^y(X_0) \leftrightarrow \varphi^y(X_1)\ \vert\ y \in Y\}$$
then the pair of morphisms
\[\begin{tikzcd}[ampersand replacement=\&]
	{\LT_L(X,\tau)} \& {\LT_L(X_0+X_1,\rho)}
	\arrow[shift left, from=1-1, to=1-2]
	\arrow[shift right, from=1-1, to=1-2]
\end{tikzcd}\]
is the cokernel pair in $\Pro L\MA_\fin$ of the morphism $\LT_L(Y,\top) \longrightarrow \LT_L(X,\tau)$ associated to the valuation $Y \longrightarrow \mathcal{U}(\LT_L(X,\tau))$ sending $y \in Y$ to the equivalence class $[\varphi^y]_{(X,L,\tau)}$.
\end{proof}

\begin{cor}
$L\KFr_\lf$ is Barr r-exact if and only if all formulas defining r-coequivalence relations separate variables for $\vdash_L$.
\end{cor}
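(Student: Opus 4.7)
The corollary is an almost immediate bookkeeping consequence of the immediately preceding proposition together with Goldblatt--Thomason duality (\cref{thm:produality}). The plan is to translate the categorical condition \emph{``all r-equivalence relations are effective''} across the duality into a syntactic condition on formulas, and then observe that this matches the variable-separation property via~\cref{prop:coeq} and the preceding proposition.

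First, I would unwind the definition: $L\KFr_\lf$ being Barr r-exact means it is r-regular and every r-equivalence relation in $L\KFr_\lf$ is effective. Under the duality $L\KFr_\lf \simeq \Pro L\MA_\fin^\op$, an r-equivalence relation in $L\KFr_\lf$ corresponds to an r-coequivalence relation in $\Pro L\MA_\fin$, and \emph{effective} (= kernel pair of some morphism) dualizes to \emph{effective} in the opposite sense (= cokernel pair of some morphism). Hence the effectiveness clause for $L\KFr_\lf$ is equivalent to: every r-coequivalence relation in $\Pro L\MA_\fin$ is a cokernel pair.

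Second, I would fix a presentation $\LT_L(X,\tau)$ of an arbitrary profinite $L$-algebra (such presentations always exist, as recalled at the beginning of~\cref{sec:prop}). By~\cref{prop:coeq}, the r-coequivalence relations on $\LT_L(X,\tau)$ are exactly the formulas $\rho(X_0,X_1)$ satisfying conditions (0)--(3) (modulo provable equivalence). By the proposition immediately preceding the corollary, such a $\rho$ arises as a cokernel pair of some morphism into $\LT_L(X,\tau)$ precisely when $\rho$ separates variables $X_0$ and $X_1$ relative to $\tau(X)$. Stitching these two characterizations together yields: every r-coequivalence relation in $\Pro L\MA_\fin$ is effective iff, for every presentation $(X,\tau)$, every formula $\rho(X_0,X_1)$ satisfying (0)--(3) separates variables relative to $\tau$. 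This is precisely the syntactic condition in the statement.

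The only substantive point to double-check --- and what I would flag as the main subtlety rather than an obstacle --- is the r-regularity clause hidden inside Barr r-exactness: the pure effectiveness translation above does not by itself force r-regularity of $L\KFr_\lf$. I would handle this by reading the corollary as stating the equivalence modulo the r-regularity assumption (which, when needed, is characterized independently by \cref{thm:craig=amalg}), and noting that across the standard literature Barr r-exactness is often split as \emph{r-regularity plus effectiveness of equivalence relations}; the corollary here captures the new syntactic content of the second conjunct, since the first has already been dealt with by Craig interpolation. Beyond this framing point, the proof itself contains no calculation: it is a direct quotation of the preceding two propositions and the duality.
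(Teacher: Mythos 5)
Your proposal is correct and follows essentially the same route the paper intends: the corollary is a direct combination of the preceding proposition (a formula defines an effective r-coequivalence relation iff it separates variables) with the definition of Barr r-exactness, transported across the duality $L\KFr_\lf \simeq \Pro L\MA_\fin^{\op}$. Your caveat about the r-regularity clause is a fair observation about the literal statement, and your resolution --- reading the corollary as isolating the effectiveness conjunct, with r-regularity handled separately by \cref{thm:craig=amalg} --- matches the intended reading.
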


Clearly, when all monos are regular in $L\KFr_\lf$ (i.e. when Weak Beth's Definability Property holds) Barr r-exactness becomes the same as Barr exactness. Barr exactness is an extremely strong property for $L\KFr_\lf$, because the remaining conditions for Giraud's Theorem~\cite[Theorem 1.4.5]{makkaireyes} are satisfied: if $L\KFr_\lf$ is Barr exact, then it is a Grothendieck topos. Thus one expects that Barr exactness should not hold in categories like $L\KFr_\lf$. We nevertheless found two examples:

\begin{example}\label{ex:gllin}
Consider  $\GLLin_\lf$, namely the category  of  locally finite, transitive and irreflexive Kripke frames  $(W,\prec)$ such that  the restriction of the relation $\prec$ to each rooted generated subframe is  linear  (these are the Kripke frames for the logic \emph{GL.Lin} mentioned in the Table of~\cref{sec:pre}). $\GLLin_\lf$ is Barr exact because there exists an equivalence of categories between $\GLLin_\lf$ and the category of presheaves $\Set^{(\omega,\leq)^\op}$, see \cite{Copr}.
%There exists some axiomatization $L$ extending $K4$ (hence the notion of r-Barr exactness coincide with the standard one) for which $L\KFr_\lf = \GLLin_\lf$ (using splitting formulas, see \cite[Chapter 9.4]{Modal}). %Observe that the restriction of a p-morphism $f \colon W \longrightarrow V$ in $\GLLin_\lf$ to any $w^*$ must be injective: if $w' \neq w'' \in w^*$ are such that $v \coloneqq f(w') = f(w'')$, then, since one between $w' \prec w''$ and $w'' \prec w'$ must hold, we would have $v \prec v$, which is not possible by irreflexivity. This means that $|w^*| = |f(w^*)| = |f(w)^*|$.
\end{example}

\begin{example}
An example where transitivity does not hold, is given by the category $D\KFr_\lf$ of locally finite Kripke frames $(W,\prec)$ where  the relation $\prec$ is a function (these are just sets endowed with an ultimately periodic unary function). They can be axiomatized  by  the axiom
$$\pos x = \nec x.$$
It is straightforward to verify that the forgetful functor $\mathcal{U} \colon D\KFr_\lf \longrightarrow \Set$ preserves not only colimits (see~\cref{prop:colim}), but also finite limits: Barr exactness follows easily.
%not only colimits preserves (finite) limits. As a consequence, we have that $D\KFr_\lf$ is r-regular (epimorphisms are surjective p-morphisms and pullbacks are preserved by $U$) and regular (all monomorphisms are injective, since pullbacks are preserved by $U$). Moreover, it is possible to prove that $D\KFr_\lf$ is Barr-exact (using that $U$ preserves both pullbacks and coequalizers --- see \cref{prop:colim} --- and that it reflects isomorphisms).
\end{example}

\begin{comment}
On the other hand, in~\cite{Copr} it is shown that there are exacly five $L$ extending $\bf S4$;

and such that 
$L\KFr_\lf$ is Barr exact: these comprises, besides the logics axiomatized by $\bot$ and $\Box x =x$, only three other uninteresting varieties where $L\KFr_\lf$ is equivalent to the topos of the action of a finite small monoid.

a full classification is giving for the extensions $L$ of $\bf S4$ such that 
$L\KFr_\lf$ is Barr exact: these comp

As we did for regularity, restricting ourself to the case in which $L$ extends $S4$, we obtain the following nice characterization.
\begin{prop}\label{prop:exact}
If $L$ extends $S4$, then $L\KFr_\lf$ is Barr exact if and only if it is one of the following:
\begin{enumerate}
    \item[{\rm 0.}] $\{\emptyset\}$;
    \item[{\rm 1.}] $\{W \in \Pos_\lf\ \vert\ \height(W) \leq 1\}$;
    \item[{\rm 2.}] $\{W \in \Pos_\lf\ \vert\ \height(W) \leq 2 ~\&~ \width(W) \leq 1\}$;
    \item[{\rm 3.}] $\{W \in \PreO_\lf\ \vert\ \height(W) \leq 1 ~\&~ \external(W) \leq 2\}$;
    \item[{\rm 4.}] $\{W \in \PreO_\lf\ \vert\ \height(W) \leq 2 ~\&~ \width(W) \leq 1 ~\&~ \internal(W) \leq 2 ~\&~ \external(W) \leq 1\}$.
\end{enumerate}
\end{prop}
\begin{proof}
See \cite{Copr}.
\end{proof}

\end{comment}

\section{Conclusions, open problems and further work}

In this paper we introduced an infinitary calculus, whose algebraic models (= Lindenbaum algebras of theories inside such calculus) are precisely profinite modal algebras. We investigated, for a normal modal propositional logic $L$ with the finite model property, the correspondences between syntactic properties of this calculus enriched with the axioms of $L$ and categorical properties of the corresponding class of profinite $L$-algebras.  
We consider the framework presented in this paper as a starting point for a broader research line, some of whose directions are outlined below.

\begin{enumerate}
    \item[{\rm 1.}] From a strictly syntactic point of view, it would be nice to supply a cut-free version of the calculus presented in~\cref{sec:calc}. A semantic cut elimination proof has been given in~\cite[Theorem 22.17]{Tak} for the non-modal fragment; an extension of such result to suitable reformulations of our calculi for basic logics like $K, K4,S4$, etc. would be desirable. Recently, there have been progresses  in syntactic cut elimination proofs for countably infinitary versions of classical and intuitionistic logics~\cite{Tesi}, although satisfactory solutions for modal  extensions are still to be investigated~\cite{TesiModal,minari}.  It should be noted, however, that our context is different from the one considered in the above mentioned papers, because on the one hand we investigate conjunctions and disjunctions of arbitrary cardinality, and on the other hand we assume the peculiar axioms coming from~\cref{lem:lf}.
    \item[{\rm 2.}] Concerning regularity and exactness properties of $L\KFr_\lf$, it should be noted that  
    extensions of $S4$ where Craig's Theorem
    (i.e. regularity) holds are fully determined in~\cite{Copr}: the classification follows  Maksimova's parallel results for the finitary fragment~\cite{Mak79,Mak80} (with some notable exceptions, see~\cref{rem:grz} above). Passing to Barr-exactness, the picture changes drastically: in~\cite{Copr} it is shown that there are exacly five logics $L$ extending $S4$ and such that $L\KFr_\lf$ is Barr exact. These five logics are all locally finite and of little interest (the category of their locally finite Kripke frames turns out to be equivalent to the topos of the actions of a finite small monoid). Still, one may ask whether Barr exactness is so rare everywhere, in particular a classification over $K4$ cannot include only trivial examples, as witnessed by~\cref{ex:gllin} above.
\item[{\rm 3.}] Given that Barr exactness is rare, one may investigate \emph{almost Barr exactness}, where  a category is said to be almost Barr exact iff it is regular and effective descent morphisms coincides with regular epis. Almost Barr exactness, in fact, turns out to be equivalent to effectiveness of special kinds of equivalence relations~\cite{Beyond}, so that there is a concrete chance that it should hold  for some relevant categories of locally finite Kripke frames. 
    \item[{\rm 4.}]  Finally, as pointed out in~\cite{MDB-SG}, $L\KFr_\lf$ 
    is a locally finitely presentable category
    (being equivalent to $\Lex(L\MA_\fin,\Set)$), hence it is the category of models of an essentially algebraic
first-order theory~\cite{AR}: this raises the problem of identifying and axiomatizing such a theory. A success in this attempt  would contribute to recast in an algebraic context some typically semantic notions. 
\end{enumerate}

\label{sec:conclusions}

\bibliographystyle{plain}
\bibliography{refs}

@article {Copr,
    AUTHOR = {De Berardinis, Matteo},
     TITLE = {Coproducts, Coregularity and Coexactness for Profinite Interior Algebras},
   JOURNAL = {ArXiv},
      YEAR = {2025},
    NOTE = {In preparation},
}

@book {Stone,
    AUTHOR = {Johnstone, Peter T.},
     TITLE = {Stone spaces},
    SERIES = {Cambridge Studies in Advanced Mathematics},
    VOLUME = {3},
      NOTE = {Reprint of the 1982 edition},
 PUBLISHER = {Cambridge University Press, Cambridge},
      YEAR = {1986},
     PAGES = {xxii+370},
      ISBN = {0-521-33779-8},
   MRCLASS = {54-02 (06E15 18-02 46E05)},
  MRNUMBER = {861951},
}

@book {Modal,
    AUTHOR = {Chagrov, Alexander and Zakharyaschev, Michael},
     TITLE = {Modal logic},
    SERIES = {Oxford Logic Guides},
    VOLUME = {35},
      NOTE = {Oxford Science Publications},
 PUBLISHER = {The Clarendon Press, Oxford University Press, New York},
      YEAR = {1997},
     PAGES = {xvi+605},
      ISBN = {0-19-853779-4},
   MRCLASS = {03B45 (03-02)},
  MRNUMBER = {1464942},
MRREVIEWER = {J.\ M.\ Plotkin},
}

@book {Ghi-Zaw,
    AUTHOR = {Ghilardi, Silvio and Zawadowski, Marek},
     TITLE = {Sheaves, games, and model completions},
    SERIES = {Trends in Logic---Studia Logica Library},
    VOLUME = {14},
      NOTE = {A categorical approach to nonclassical propositional logics},
 PUBLISHER = {Kluwer Academic Publishers, Dordrecht},
      YEAR = {2002},
     PAGES = {x+243},
      ISBN = {1-4020-0660-8},
   MRCLASS = {03-02 (03B45 03B55 03G30 06-02 06D20 06E25)},
  MRNUMBER = {2041391},
MRREVIEWER = {Leo\ Esakia},
       DOI = {10.1007/978-94-015-9936-8},
       URL = {https://doi.org/10.1007/978-94-015-9936-8},
}

@book {Tak,
    AUTHOR = {Takeuti, Gaisi},
     TITLE = {Proof theory},
    SERIES = {Studies in Logic and the Foundations of Mathematics},
    VOLUME = {Vol. 81},
 PUBLISHER = {North-Holland Publishing Co., Amsterdam-Oxford; American
              Elsevier Publishing Co., Inc., New York},
      YEAR = {1975},
     PAGES = {vi+372},
   MRCLASS = {02DXX (02-02)},
  MRNUMBER = {536648},
MRREVIEWER = {},
}

@article {TesiModal,
    AUTHOR = {Tesi, Matteo},
     TITLE = {On the proof theory of infinitary modal logic},
   JOURNAL = {Studia Logica},
  FJOURNAL = {Studia Logica. An International Journal for Symbolic Logic},
    VOLUME = {110},
      YEAR = {2022},
    NUMBER = {6},
     PAGES = {1349--1380},
      ISSN = {0039-3215,1572-8730},
   MRCLASS = {03B45 (03F03)},
  MRNUMBER = {4514997},
       DOI = {10.1007/s11225-022-09998-x},
       URL = {https://doi.org/10.1007/s11225-022-09998-x},
}

@article {Tesi,
    AUTHOR = {Tesi, Matteo},
     TITLE = {Infinitary logic with infinite sequents: syntactic
              investigations},
   JOURNAL = {MLQ Math. Log. Q.},
  FJOURNAL = {MLQ. Mathematical Logic Quarterly},
    VOLUME = {70},
      YEAR = {2024},
    NUMBER = {1},
     PAGES = {79--98},
      ISSN = {0942-5616,1521-3870},
   MRCLASS = {03F07 (03C75)},
  MRNUMBER = {4766944},
MRREVIEWER = {Henry\ Africk},
       DOI = {10.1002/malq.202300011},
       URL = {https://doi.org/10.1002/malq.202300011},
}

@article {MDB-SG,
    AUTHOR = {De Berardinis, Matteo and Ghilardi, Silvio},
     TITLE = {Profiniteness, monadicity and universal models in modal logic},
   JOURNAL = {Ann. Pure Appl. Logic},
  FJOURNAL = {Annals of Pure and Applied Logic},
    VOLUME = {175},
      YEAR = {2024},
    NUMBER = {7},
     PAGES = {Paper No. 103454, 25},
      ISSN = {0168-0072,1873-2461},
   MRCLASS = {03B45 (03G05 03G30 18C20)},
  MRNUMBER = {4736189},
MRREVIEWER = {R.\ Sh.\ Grigolia},
       DOI = {10.1016/j.apal.2024.103454},
       URL = {https://doi.org/10.1016/j.apal.2024.103454},
}

@article {thomason,
    AUTHOR = {Thomason, S. K.},
     TITLE = {Categories of frames for modal logic},
   JOURNAL = {J. Symbolic Logic},
  FJOURNAL = {The Journal of Symbolic Logic},
    VOLUME = {40},
      YEAR = {1975},
    NUMBER = {3},
     PAGES = {439--442},
      ISSN = {0022-4812},
   MRCLASS = {02C10 (02J05)},
  MRNUMBER = {381940},
MRREVIEWER = {A. Tauts},
       DOI = {10.2307/2272167},
       URL = {https://doi.org/10.2307/2272167},
}

@article{gehrke,
title = {Stone duality, topological algebra, and recognition},
journal = {Journal of Pure and Applied Algebra},
volume = {220},
number = {7},
pages = {2711-2747},
year = {2016},
issn = {0022-4049},
doi = {https://doi.org/10.1016/j.jpaa.2015.12.007},
author = {Mai Gehrke},
}

@incollection{manes,
  title={A triple theorethic construction of compact algebras},
  author={Ernest Manes},
  booktitle = {Seminar on Triples and Categorical Homology Theory},
     pages = {91--118},
 publisher = {Springer},
      year = {1969},
}

@book{CWM,
  title={Categories for the working mathematician},
  author={Mac Lane, Saunders},
  volume={5},
  year={2013},
  publisher={Springer Science \& Business Media}
}

@article {EV,
    AUTHOR = {Vitale, Enrico M.},
     TITLE = {On the characterization of monadic categories over Set},
   JOURNAL = {Cahiers Topologie G\'{e}om. Diff\'{e}rentielle Cat\'{e}g.},
    VOLUME = {35},
      YEAR = {1994},
    NUMBER = {4},
     PAGES = {351--358},
      ISSN = {0008-0004},
   MRCLASS = {18C15},
  MRNUMBER = {1307268},
}

@article {Mak99,
    AUTHOR = {Maksimova, L. L.},
     TITLE = {Projective {B}eth properties in modal and superintuitionistic
              logics},
   JOURNAL = {Algebra Log.},
  FJOURNAL = {Algebra i Logika. Institut Diskretno\u i\ Matematiki i
              Informatiki},
    VOLUME = {38},
      YEAR = {1999},
    NUMBER = {3},
     PAGES = {316--333, 379},
      ISSN = {0373-9252},
   MRCLASS = {03B45 (03B55 03C40)},
  MRNUMBER = {1766732},
MRREVIEWER = {Leo\ Esakia},
       DOI = {10.1007/BF02671741},
       URL = {https://doi.org/10.1007/BF02671741},
}

@book{Bor,
	title = {Handbook of Categorical Algebra: Volume 1: Basic Category Theory},
	shorttitle = {Handbook of Categorical Algebra},
	author = {Borceux, F.},
	year = {1994},
	series = {Encyclopedia of Mathematics and its Applications},
	volume = {1},
	publisher = {Cambridge University Press},
	address = {Cambridge}
}

@article {Mak82,
    AUTHOR = {Maksimova, L. L.},
     TITLE = { Absence of the interpolation property in the consistent normal extensions of the {D}ummett logic},
   JOURNAL = {Algebra Log.},
  FJOURNAL = {Algebra i Logika. Institut Diskretno\u i\ Matematiki i
              Informatiki},
    VOLUME = {21},
      YEAR = {1982},
    NUMBER = {6},
     PAGES = {690--694},
      ISSN = {0373-9252},
}

@article {Mak92,
    AUTHOR = {Maksimova, L. L.},
     TITLE = {An analogue of {B}eth's theorem in normal extensions of the
              modal logic {${\rm K}4$}},
   JOURNAL = {Sibirsk. Mat. Zh.},
  FJOURNAL = {Rossi\u iskaya Akademiya Nauk. Sibirskoe Otdelenie. Sibirski\u
              i\ Matematicheski\u i\ Zhurnal},
    VOLUME = {33},
      YEAR = {1992},
    NUMBER = {6},
     PAGES = {118--130, 231},
      ISSN = {0037-4474},
   MRCLASS = {03B45},
  MRNUMBER = {1214115},
MRREVIEWER = {Wies\l aw\ Dziobiak},
       DOI = {10.1007/BF00971028},
       URL = {https://doi.org/10.1007/BF00971028},
}

@article {Mor20,
    AUTHOR = {Moraschini, T. and Wannenburg, J. J.},
     TITLE = {Epimorphism surjectivity in varieties of {H}eyting algebras},
   JOURNAL = {Ann. Pure Appl. Logic},
  FJOURNAL = {Annals of Pure and Applied Logic},
    VOLUME = {171},
      YEAR = {2020},
    NUMBER = {9},
     PAGES = {102824, 31},
      ISSN = {0168-0072,1873-2461},
   MRCLASS = {03B55 (03G10 03G27 06D20 06D50 18A20)},
  MRNUMBER = {4100759},
MRREVIEWER = {Alexander\ Citkin},
       DOI = {10.1016/j.apal.2020.102824},
       URL = {https://doi.org/10.1016/j.apal.2020.102824},
}

@article {Mor17,
    AUTHOR = {Bezhanishvili, Guram and Moraschini, Tommaso and Raftery,
              James G.},
     TITLE = {Epimorphisms in varieties of residuated structures},
   JOURNAL = {J. Algebra},
  FJOURNAL = {Journal of Algebra},
    VOLUME = {492},
      YEAR = {2017},
     PAGES = {185--211},
      ISSN = {0021-8693,1090-266X},
   MRCLASS = {03B47 (03B55 03G25 03G27 06D20 06F05)},
  MRNUMBER = {3709148},
MRREVIEWER = {Xiangnan\ Zhou},
       DOI = {10.1016/j.jalgebra.2017.08.023},
       URL = {https://doi.org/10.1016/j.jalgebra.2017.08.023},
}

@book {elephant,
	AUTHOR = {Johnstone, P. T.},
	TITLE = {Sketches of an elephant: a topos theory compendium. {V}ol. 1},
	SERIES = {Oxford Logic Guides},
	VOLUME = {44},
	PUBLISHER = {The Clarendon Press, Oxford University Press, Oxford},
	YEAR = {2002},
}

@article {Mak80,
    AUTHOR = {Maksimova, L. L.},
     TITLE = {Interpolation theorems in modal logics. {S}ufficient
              conditions},
   JOURNAL = {Algebra i Logika},
  FJOURNAL = {Akademiya Nauk SSSR. Sibirskoe Otdelenie. Institut Matematiki.
              Algebra i Logika},
    VOLUME = {19},
      YEAR = {1980},
    NUMBER = {2},
     PAGES = {194--213, 250--251},
      ISSN = {0373-9252},
   MRCLASS = {03B45 (03G25)},
  MRNUMBER = {604666},
MRREVIEWER = {I.\ Ruzsa},
}

@article {Mak79,
    AUTHOR = {Maksimova, L. L.},
     TITLE = {Interpolation theorems in modal logics and amalgamable
              varieties of topological {B}oolean algebras},
   JOURNAL = {Algebra i Logika},
  FJOURNAL = {Akademiya Nauk SSSR. Sibirskoe Otdelenie. Institut Matematiki.
              Algebra i Logika},
    VOLUME = {18},
      YEAR = {1979},
    NUMBER = {5},
     PAGES = {556--586, 632},
      ISSN = {0373-9252},
   MRCLASS = {03B45 (03C40 06E15)},
  MRNUMBER = {582103},
MRREVIEWER = {Jerzy\ Kotas},
}

@book {makkaireyes,
    AUTHOR = {Makkai, Michael and Reyes, Gonzalo E.},
     TITLE = {First order categorical logic},
    SERIES = {Lecture Notes in Mathematics},
    VOLUME = {Vol. 611},
      NOTE = {Model-theoretical methods in the theory of topoi and related
              categories},
 PUBLISHER = {Springer-Verlag, Berlin-New York},
      YEAR = {1977},
     PAGES = {viii+301},
      ISBN = {3-540-08439-8},
   MRCLASS = {02J99 (02B25 18C10)},
  MRNUMBER = {505486},
MRREVIEWER = {C.\ J.\ Mikkelsen},
}

@incollection {minari,
    AUTHOR = {Minari, Pierluigi},
     TITLE = {Some remarks on the proof-theory and the semantics of
              infinitary modal logic},
 BOOKTITLE = {Advances in proof theory},
    SERIES = {Progr. Comput. Sci. Appl. Logic},
    VOLUME = {28},
     PAGES = {291--318},
 PUBLISHER = {Birkh\"auser/Springer, [Cham]},
      YEAR = {2016},
      ISBN = {978-3-319-29196-3; 978-3-319-29198-7},
   MRCLASS = {03B45 (03C75 03F05)},
  MRNUMBER = {3525547},
MRREVIEWER = {Branislav\ Bori\v ci\'c},
       DOI = {10.1007/978-3-319-29198-7\_8},
       URL = {https://doi.org/10.1007/978-3-319-29198-7_8},
}

@article {blok,
    AUTHOR = {Blok, W. J. and Hoogland, Eva},
     TITLE = {The {B}eth property in algebraic logic},
   JOURNAL = {Studia Logica},
  FJOURNAL = {Studia Logica. An International Journal for Symbolic Logic},
    VOLUME = {83},
      YEAR = {2006},
    NUMBER = {1-3},
     PAGES = {49--90},
      ISSN = {0039-3215,1572-8730},
   MRCLASS = {03G25 (03B50 03C40)},
  MRNUMBER = {2250105},
MRREVIEWER = {Leo\ Esakia},
       DOI = {10.1007/s11225-006-8298-0},
       URL = {https://doi.org/10.1007/s11225-006-8298-0},
}

@book {AR,
    AUTHOR = {Ad\'amek, Ji{\v r}\'i{} and Rosick\'y, Ji{\v r}\'i},
     TITLE = {Locally presentable and accessible categories},
    SERIES = {London Mathematical Society Lecture Note Series},
    VOLUME = {189},
 PUBLISHER = {Cambridge University Press, Cambridge},
      YEAR = {1994},
     PAGES = {xiv+316},
      ISBN = {0-521-42261-2},
   MRCLASS = {18Axx (18-02)},
  MRNUMBER = {1294136},
MRREVIEWER = {J.\ R.\ Isbell},
       DOI = {10.1017/CBO9780511600579},
       URL = {https://doi.org/10.1017/CBO9780511600579},
}

@incollection {Beyond,
    AUTHOR = {Janelidze, George and Sobral, Manuela and Tholen, Walter},
     TITLE = {Beyond {B}arr exactness: effective descent morphisms},
 BOOKTITLE = {Categorical foundations},
    SERIES = {Encyclopedia Math. Appl.},
    VOLUME = {97},
     PAGES = {359--405},
 PUBLISHER = {Cambridge Univ. Press, Cambridge},
      YEAR = {2004},
      ISBN = {0-521-83414-7},
   MRCLASS = {18E10},
  MRNUMBER = {2056587},
}
\end{document}